\newtheorem{theorem}{Theorem}
\newtheorem*{theorem*}{Theorem}
\newtheorem{proposition}{Proposition}
\numberwithin{equation}{section}
\begin{document}
\begin{CJK}{UTF8}{gbsn}

\title{SHOCK INTERACTION IN SPHERE SYMMETRY}
\author{YUXUAN WANG}
\date{}

\maketitle

\begin{abstract}
We investigate the interaction of two oncoming shock waves in spherical symmetry for an ideal barotropic fluid. Our research problem is how to establish a local in time solution after the interaction point and determine the state behind the shock waves. This problem is a double free boundary problem, as the position of the shock waves in space time is unknown. Our work is based on a number of previous studies, including Lisibach's, who studied the case of plane symmetry. To solve this problem, we will use an iterative regime to establish a local in time solution.
\end{abstract}

\tableofcontents

\section{Introduction}

\subsection{Euler equations}

Conservation laws are fundamental principles in physics that describe the behavior of physical systems.  They state that certain physical properties, such as mass, energy, and momentum, are conserved and do not change over time within an isolated system. Conservation laws are expressed mathematically as partial differential equations, which can be used to study nonlinear wave phenomena. However, understanding conservation laws in higher dimensions is complicated, so the focus is mainly on the one dimensional case.

Euler equations are a special class of conservation laws, which are used to describe the adiabatic inviscid flow of fluids. These equations were first proposed by Euler. One of the most interesting features of compressible Euler equations is the phenomenon of shock formation. This problem was first studied in one space dimension by Riemann in 1860 \cite{riemann1860fortpflanzung}. He proved that from smooth data, shocks can form in finite time.

In fact, Euler equations can be utilized in both incompressible and compressible flow scenarios.  The incompressible Euler equations encompass the Cauchy equations for mass conservation and momentum balance, in addition to the condition that the flow velocity constitutes a solenoidal field.  On the other hand, the compressible Euler equations comprise equations for mass conservation, momentum balance, and energy balance, along with an appropriate constitutive equation for the specific energy density of the fluid.

The mathematical properties of the incompressible and compressible Euler equations differ significantly.  When the fluid density is constant, the incompressible equations can be expressed as a quasilinear advection equation for the fluid velocity, along with an elliptic Poisson's equation for the pressure.  Conversely, the compressible Euler equations represent a quasilinear hyperbolic system of conservation equations.

The compressible and most general Euler equations can be expressed concisely in differential convective form using the material derivative notation:

\begin{equation}\label{eq1.3}
\left\{
\begin{aligned}
\frac{D\rho}{Dt}&=-\rho\nabla\cdot\textbf{u},\\
\frac{D\textbf{u}}{Dt}&=-\frac{\nabla p}{\rho}+\textbf{g},\\
\frac{De}{Dt}&=-\frac{p}{\rho}\nabla\cdot\textbf{u}.
\end{aligned}
\right.
\end{equation}
Here $\textbf{u}$ is the flow velocity vector, $\rho$ is the fluid mass density, $p$ is the pressure, $\textbf{g}$ represents body accelerations (per unit mass) acting on the continuum, $e$ is the specific internal energy (internal energy per unit mass). If we expand the material derivative, \eqref{eq1.3} can be expressed as:

\begin{equation}\label{eq1.4}
\left\{
\begin{aligned}
\frac{\partial \rho}{\partial t}+\textbf{u}\cdot \nabla\rho+\rho\nabla\cdot\textbf{u}&=0,\\
\frac{\partial\textbf{u}}{\partial t}+\textbf{u}\cdot\nabla\textbf{u}+\frac{\nabla p}{\rho}&=\textbf{g},\\
\frac{\partial e}{\partial t}+\textbf{u}\cdot\nabla e+\frac{p}{\rho}\nabla\cdot\textbf{u}&=0.
\end{aligned}
\right.
\end{equation}
The equations above thus represent conservation of mass, momentum, and energy.

\subsection{Shock wave interaction}

This paper investigates the interaction of two oncoming shocks in the case of spherical symmetry in three dimensional space, where the fluid is assumed to be ideally barotropic. The problem is formulated as follows: two shock waves, an inward propagating spherical shock wave and an outward propagating spherical shock wave, are coming face to face with the same center of the sphere. As time progresses, the distance between the two shock waves decreases until they collide. After the collision, the two shock waves move away from each other, leaving behind a growing region. The time of collision is assumed to be $t=0$, and the distance between the two shock waves and the center of their sphere at the time of collision is $r=r_0$. Two sets of initial values are given, corresponding to $r\geq r_0$ and $r\leq r_0$, respectively. This paper solely focuses on the interaction of the two spherical shock waves and does not delve into their formation, development, and propagation.

Upon crossing the shock curve, the solution undergoes a discontinuous jump. The relationship between the physical quantities on each side of the shock curve that must be satisfied is referred to as the jump conditions. Consequently, we postulate that at time $t=0$, the two sets of initial values corresponding to $r\geq r_0$ and $r\leq r_0$ have distinct physical states at $r=r_0$, with their values differing at the point of interaction. Furthermore, by applying the jump conditions to the two sets of initial values, we can calculate the speeds of the two resulting shocks at the interaction point. Each shock wave moves at a subsonic speed relative to its state behind and at a supersonic speed relative to its state ahead, which is known as the determinism condition. The definitions of the state behind and the state ahead of shock waves can be found in \cite{christodoulou2016shock}.

Our main work is as follows: first we formulate the shock interaction problem, and then we construct the solution of the shock interaction problem by establishing an iterative regime. Finally, we study the uniqueness and regularity of the solution.

\begin{figure}[htbp]
  \centering
  \includegraphics{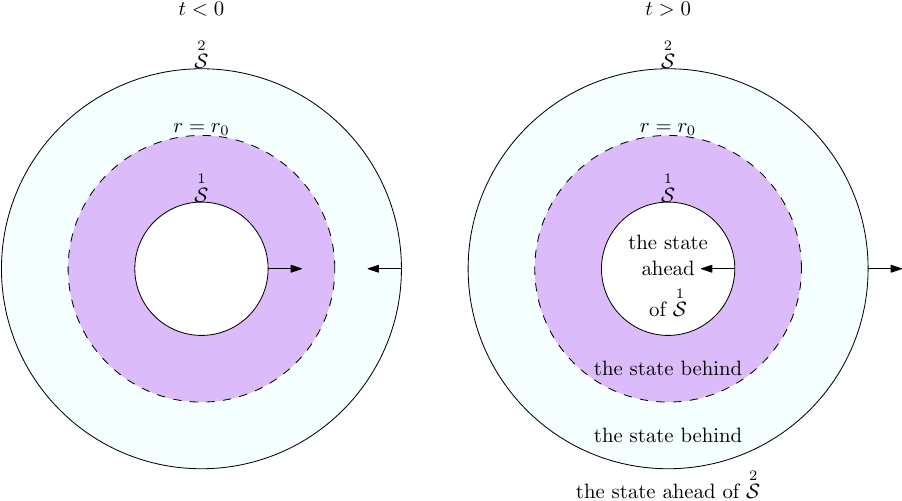}
  \caption{The moment before the interaction (left) and the moment after the interaction (right).}
  \label{p0}
\end{figure}

\begin{theorem*}[Rough statement of the theorems]\label{theorem0}

Given two sets of smooth initial values on $r\le r_0$ and $r\ge r_0$, there exists a unique $C^2$ solution to the shock interaction problem in the characteristic coordinate system in the state behind. Furthermore, the solution is smooth with respect to the characteristic coordinates.

\end{theorem*}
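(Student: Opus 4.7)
The plan is to convert the double free boundary problem into a problem on a fixed quadrant by introducing characteristic coordinates $(u,v)$, chosen so that the two post-interaction shock curves coincide with the axes $\{v=0\}$ and $\{u=0\}$, with the interaction point at the origin and the state behind corresponding to $u,v\ge 0$. In these coordinates both the spherically symmetric Euler system \eqref{eq1.4} and the Rankine--Hugoniot jump conditions become smooth relations between unknowns defined on the fixed quadrant; I take as unknowns the spacetime map $(t,r)=(t(u,v),r(u,v))$, the fluid variables (equivalently, the two Riemann invariants of the 1D barotropic system), and the two shock speed functions along the axes. The state ahead of each shock is determined by the prescribed smooth initial data on $r\lessgtr r_0$, propagated by the standard Cauchy theory for the hyperbolic system on either side.

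First I would write the equations in characteristic form, separating the principal part (which is identical to Lisibach's planar problem) from the spherical source term of order $1/r$, which is smooth and bounded near $r=r_0>0$. I would then express the jump conditions along each axis as nonlinear relations between the boundary traces of the interior unknowns, the shock speed function, and the known state ahead; their solvability at the interaction point is guaranteed by the determinism condition, which ensures that the linearisation of the jump system is nondegenerate. The iterative scheme is then natural: given the $n$-th iterate of the shock speeds and of the boundary traces, I integrate the characteristic system along both families of characteristics on a small quadrant $\{0\le u,v\le\varepsilon\}$ to obtain the $n$-th interior state, and then update the boundary traces and shock speeds via the jump relations to produce the $(n{+}1)$-st iterate. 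A contraction argument in a suitable $C^2$ ball, with $\varepsilon$ chosen sufficiently small, yields existence and uniqueness.

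The main obstacle will be closing the contraction at the $C^2$ level while simultaneously tracking the coupling between the interior iterates, the boundary traces, and the shock speed functions, all of which depend on one another through the jump relations. I would address this by working in a norm that controls differences of successive iterates together with their first two derivatives, and by using the smoothness of the Hugoniot map at the interaction point and the boundedness of the spherical source to absorb lower-order contributions; compatibility conditions at the corner $u=v=0$, guaranteed by the determinism condition together with the prescribed data, prevent any loss of regularity there. Uniqueness then follows by applying the same contraction estimate to two candidate solutions on their common domain. Finally, smoothness in the characteristic coordinates is obtained by differentiating the system and the jump relations to arbitrary order: each differentiation produces a linear problem of the same structure for the higher derivatives, driven by already-constructed lower-order quantities, and a standard bootstrap yields $C^k$ regularity for every $k$ whenever the data are smooth.
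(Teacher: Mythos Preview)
Your overall strategy---iteration plus contraction in $C^2$, uniqueness by the same estimate, and smoothness by bootstrap---matches the paper in spirit, but the geometric setup you describe is not viable and hides the mechanism that actually makes the contraction close.

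You propose characteristic coordinates $(u,v)$ in which the two post-interaction shocks lie on the axes $\{u=0\}$ and $\{v=0\}$. This is impossible: in characteristic coordinates for the state behind, the lines $\{u=\text{const}\}$ and $\{v=\text{const}\}$ are genuine characteristics, whereas the determinism condition forces each shock to be strictly non-characteristic (subsonic) relative to the state behind. Concretely, at the interaction point one has $c_{in+}<\overset{1}{V}_0<c_{out+}$ and likewise for the right shock, so neither shock can coincide with a characteristic axis. The paper's resolution is to keep $(u,v)$ genuinely characteristic and instead straighten the shocks to the \emph{non-characteristic} lines $u=v$ and $u=av$, with $0<a<1$ determined by the shock speeds via \eqref{eq5.5}; the state behind is then the wedge $T_\varepsilon=\{av\le u\le v\}$, not a quadrant. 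Getting the right shock to be exactly $u=av$ requires a nontrivial $C^2$ change of the characteristic parameter (the construction of $\phi$ in Section~\ref{5.1}), which your outline does not anticipate.

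This geometry is not cosmetic: the number $a\in(0,1)$ is precisely what drives the contraction. In the paper the iterate for $r$ satisfies
\[
\frac{\partial^2 r_{m+1}}{\partial u^2}(u,v)=a\,\overset{1}{\gamma}_m(u)\,\frac{\partial^2 r_m}{\partial u^2}(au,u)+\cdots,
\]
with $\overset{1}{\gamma}_m(0)=1$, so the leading coefficient is $a<1$ independently of $\varepsilon$; and the boundary iterates close because $\overset{1}{F}_0\overset{2}{F}_0=a^2<1$. Smallness of $\varepsilon$ alone controls only the lower-order and cross terms, and would not by itself give a contraction in the top-order quantities. Your sketch, which attributes the contraction entirely to choosing $\varepsilon$ small, would therefore not close at the $C^2$ level as written. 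Once you adopt the wedge geometry with parameter $a$, the rest of your plan (iteration, uniqueness via the same difference estimate, inductive bootstrap for higher regularity) aligns with what the paper does.
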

The precise statement of the theorems is given in Theorem \ref{existence}, Theorem \ref{uniqueness} and Theorem \ref{higher regularity}.

\subsection{Some recent results on shock formation and shock development}

The study of shock waves originated in the context of gas dynamics. The book by Courant and Friedrichs \cite{courant1948supersonic} presented a coherent mathematical exposition of materials from the physics and engineering literature accumulated in the past, paving the way for the development of a general theory by Lax \cite{https://doi.org/10.1002/cpa.3160100406}. The notion of Hugoniot locus, in gas dynamics, is traced back to the work of Riemann \cite{riemann1860fortpflanzung} and Hugoniot \cite{article}, but the definition of shock curves in the general setting is due to Lax \cite{https://doi.org/10.1002/cpa.3160100406}. The significance of systems with coinciding shock and rarefaction wave curves was first recognized by Temple \cite{Temple1983SystemsOC}, who conducted a thorough study of their noteworthy properties. A detailed discussion is also contained in Serre \cite{article1}. For gas dynamics, the statement that admissible shocks should be subsonic relative to their left state and supersonic relative to their right state is found in the pioneering paper of Riemann \cite{riemann1860fortpflanzung}. This principle was postulated as a general shock admissibility criterion, namely the Lax $E$-condition, by Lax \cite{https://doi.org/10.1002/cpa.3160100406}.

The study of shock formation for the Euler equations has a long history, especially in the case of one dimensional space. The 1D isentropic Euler equations serve as an exemplar of a $2\times 2$ system of conservation laws. Through the utilization of Riemann invariants, Lax \cite{lax1964development} demonstrated that shocks can form in finite time from smooth data in general $2\times 2$ genuinely nonlinear hyperbolic systems. Majda \cite{majda2012compressible} further offered a geometric proof that encompassed $2\times 2$ systems with linear degeneracy. Subsequently, John \cite{1974Formation} established the occurrence of finite-time shock formation in $n\times n$ genuinely nonlinear hyperbolic systems, with Liu \cite{liu1979development} subsequently extending this finding. Klainerman-Majda \cite{klainerman1980formation}, on the other hand, substantiated the formation of singularities in second-order quasilinear wave equations, which encompass the nonlinear vibrating string. For a more comprehensive list of 1D results, see Dafermos's book \cite{dafermos2005hyperbolic}.

The first proof of shock formation for the compressible Euler equations in the multi-dimensional setting was originally demonstrated by Christodoulou \cite{christodoulou2007formation} in relativistic fluids under the condition of irrotational flow. Subsequently, Christodoulou-Miao \cite{christodoulou2014compressible} applied the same methodology to investigate shock formation in a non-relativistic setting also with irrotational flow. Christodoulou's approach revolves around the use of a novel eikonal function (see also Christodoulou-Klainerman \cite{ChristodoulouKlainerman+1994} and Klainerman-Rodnianski \cite{10.1215/S0012-7094-03-11711-1}), whose level sets correspond to characteristics of the flow. By introducing the concept of inverse foliation density, which is inversely proportional to time-weighted derivatives of the eikonal function, Christodoulou proved that shocks form when the inverse foliation density vanishes (i.e., characteristics cross). Furthermore, he established that no other breakdown mechanism can occur prior to this shock formation. The proof relies on the utilization of a geometric coordinate system, along which the solution has long-time existence and remains bounded. Consequently, the shock is constructed by means of the singular (or degenerate) transformation from geometric to Cartesian coordinates. For the restricted shock development problem, in which the Euler solution is continued beyond the time of first singularity while neglecting vorticity production, please refer to Section 1.6 of \cite{christodoulou2017shock}. Starting with piecewise regular initial data for which there is a closed curve of discontinuity, across which the density and normal component of velocity experience a jump, Majda \cite{majda1983existence}, \cite{majda1983stability}, \cite{majda2012compressible} proved that such a shock can always be continued for a short interval of time, but with derivative loss.  M\'{e}tivier \cite{metivier2001stability} later refined this result and reduced the derivative loss to only a $1/2$-derivative. The existence and stability of this multidimensional shock propagation problem in the vanishing viscosity limit were studied by Gues-M\'{e}tivier-Williams-Zumbrun \cite{gues2005existence}.

The initial findings on the formation of shocks in 2D quasilinear wave equations, which do not meet Klainerman's null condition, were established by Alinhac \cite{Alinhac1999}, \cite{alinhac1999blowup}, providing a comprehensive description of the phenomenon of blowup. The analysis of shock formation in quasilinear wave equations has been greatly influenced by the geometric framework presented in \cite{christodoulou2007formation}. Holzegel-Klainerman-Speck-Wong \cite{holzegel2016small} have explained the mechanism behind stable shock formation in specific types of quasilinear wave equations with small data in three dimensions. Speck \cite{speck2016shock} has further generalized and unified previous work on the formation of singularities in both covariant and non-covariant scalar wave equations of a particular form. He demonstrated that when the nonlinear terms fail Klainerman's null condition, shocks arise in solutions derived from a set of small data, which can be seen as a converse of the well-known result of Christodoulou-Klainerman \cite{ChristodoulouKlainerman+1994}, which showed that the existence of global solutions for small data is guaranteed when the classic null condition is satisfied. Miao-Yu \cite{Miao2017} proved shock formation in quasilinear wave equations derived from the least action principle and satisfying the null condition, using what is known as short pulse data. The first proof of shock formation in fluid flows with vorticity was presented by Luk-Speck \cite{Luk2018}, for the 2D isentropic Euler equations with vorticity. The inclusion of nontrivial vorticity in their analysis not only allows for a broader range of data but also involves two families of waves, sound waves and vorticity waves, leading to interactions between multiple characteristics. Their proof utilizes Christodoulou's geometric framework from \cite{christodoulou2007formation}, \cite{christodoulou2014compressible}, while also introducing new methodologies to address the vorticity waves mentioned earlier.  They establish new estimates for the regularity of the transported vorticity-divided-by-density and heavily rely on a novel framework for describing the 2D compressible Euler equations as a coupled system of covariant wave and transport equations. In \cite{Luk2018}, Luk-Speck consider solutions to the Euler equations, which are small perturbations of a specific class of outgoing simple plane waves.

Next, we present some results on Riemann problems for the compressible Euler system and the nonlinear wave system. In the case of 2D, for example,  some researchers have investigated shock reflection problems \cite{bae2009regularity}, \cite{canic2006free}, \cite{chen2010global}, \cite{chen2018mathematics}, \cite{chen2020convexity}, \cite{chen2019uniqueness}, \cite{jegdic2006transonic}, \cite{kim2013transonic}, \cite{tesdall2007triple}, \cite{zheng2006two}, shock diffraction problems \cite{chen2014shock}, supersonic flows around a convex wedge \cite{bae2013prandtl}, \cite{elling2008supersonic}, the interaction of transonic shock and rarefaction waves \cite{kim2012interaction}, and the interactions of rarefaction waves \cite{bang2009interaction}, \cite{hu2014semi}, \cite{hu2015interaction}, \cite{kim2016two}, \cite{lei2007complete}, \cite{li2012interaction}, \cite{li2011characteristic}, \cite{li2009interaction}, \cite{song2009semi}. 

For higher and one-dimensional cases, many scholars have done research. For example, Luo-Yu \cite{luo2023stability} proved the non-nonlinear stability of the Riemann problem for multi-dimensional isentropic Euler equations in the regime of rarefaction waves and they studied multi-dimensional perturbations of the classical Riemann problem for isentropic Euler equations in \cite{luo2023stability1}. Lisibach conducted a study on shock reflection and interaction problems in one-dimensional space. In \cite{lisibach2021shock}, he examined the shock reflection of an ideal barotropic fluid with respect to a fixed solid wall under plane symmetry, while in \cite{lisibach2022shock}, he investigated the interaction problem of two oncoming shock waves in the case of plane symmetry. Lisibach utilized various mathematical techniques, such as introducing characteristic coordinates and constructing function spaces, to prove the local existence, uniqueness, and higher order regularity of the solution in these two problems.

In this paper, we investigate the problem of shock wave interaction in spherical symmetry, which is more complex than the plane symmetry studied in \cite{lisibach2022shock}. While Lisibach's work provides valuable insights, some details, such as the selection of characteristic coordinates, are omitted. To fill this gap, we present a more detailed proof, building on the ideas and goals of Lisibach's work and constructing a similar iteration. However, the equations satisfied by the Riemann invariants in our problem are more complex, requiring the introduction of additional function spaces and iterative variables. By proving the convergence of the iteration sequences in the function space we construct, we establish the local existence and uniqueness of the solution to the interaction problem, as well as its higher order regularity. Our results will contribute to a better understanding of the interaction of spherical shock waves in 3 dimensional space.

\subsection{Overview of the article}

Finally, we summarize the work done in each section of the paper.

\textbf{Section} \ref{2}: Section 2 introduces the notations used in the article.

\textbf{Section} \ref{3}: Section 3 establishes the description of the equations of motion.  Specifically, Section 3.1 presents the Euler equations satisfied by the ideal barotropic fluid in the case of 3 dimensional spherical symmetry. Section 3.2 defines the Riemann invariants and characteristic speed and calculates the equations satisfied by Riemann invariants in $t-r$ coordinate system.  Section 3.3 derives the jump conditions and provides specific expressions of the determinism condition. 

\textbf{Section} \ref{4}: In Section 4, the quantities in the state ahead of the two emerged shocks are given in Section 4.1, and the jump conditions are used to calculate the values at the interaction point.  Section 4.2 provides a complete description of the shock interaction problem and the objectives of this paper.

\textbf{Section} \ref{5}: In Section \ref{5.1}, we provide a detailed explanation of the introduction of characteristic coordinates. In Sections \ref{5.2} and \ref{5.3}, we present the boundary conditions and jump conditions in the characteristic coordinates and calculate the initial values that the Riemann invariants satisfy at the interaction point. In Section \ref{5.4}, we derive the equations satisfied by $\frac{\partial r}{\partial u}$ and $\frac{\partial r}{\partial v}$ by integrating along the incoming and outcoming characteristics, which are crucial for the later proof. In Section \ref{5.5}, we provide a complete mathematical statement of the shock interaction problem.

\textbf{Section} \ref{6}: In Section \ref{6.1}, We provide an iterative regime. Section \ref{6.2} explains the iterative steps and provides details of the proofs that follow. Sections \ref{6.3} and \ref{6.4} prove that the five previously selected variables are in the corresponding function spaces in each iteration step, and that they converge under the defined norm by selecting appropriate constants and taking advantage of the small condition of $\varepsilon$. In Section \ref{6.5}, we prove the local existence theorem of the solution. Finally, Sections \ref{6.6} and \ref{6.7} prove the uniqueness of the solution in the corresponding function spaces by providing the asymptotic form satisfied by the solution, and using a method similar to the proof of convergence in Section \ref{6.4}.

\textbf{Section} \ref{7}: In this section, we aim to establish the infinite differentiability of the solution with respect to the characteristic coordinates through a mathematical induction proof.

\begin{figure}[htbp]
  \centering
  \includegraphics{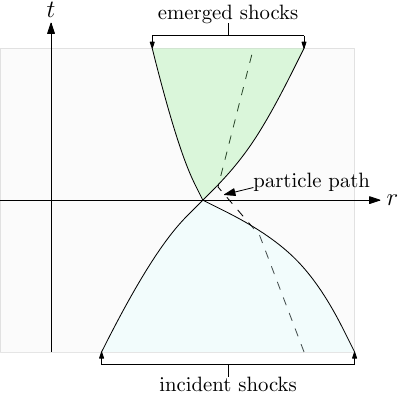}
  \caption{The interaction of shock waves}
  \label{p1}
\end{figure}

\section{Notation}\label{2}

In this section, we summarize some symbol conventions that will be used later.

\begin{itemize}
\item[$\bullet$] To distinguish between quantities related to the left moving shock curve and the right moving shock curve, we utilize superscript 1 (e.g., $\overset{1}{\alpha}$) for the former and superscript 2 (e.g., $\overset{2}{\beta}$) for the latter.
\item[$\bullet$] In order to represent the quantities in the state ahead, we will add an asterisk $(*)$ to the upper right of the functions. For instance, ${\overset{1}{\alpha}}^*$ and ${\overset{2}{\beta}}^*$ will be used to denote these functions. Additionally, the components of these functions will be represented by the coordinates $t$ and $r$.
\item[$\bullet$] The state behind is represented by the region between the two emerged shocks, with its corresponding components of functions denoted by the coordinates $u$ and $v$.
\item[$\bullet$] In our analysis, we utilize the subscript $+$ to denote the quantities along each shock curve in the state behind, while the subscript $-$ is employed to represent the quantities along each shock curve in the state ahead.
\item[$\bullet$] We represent $\left|f(u)\right|\le C(A)u^n$ as $f(u)=\mathcal{O}_A(u^n)$, where $C(A)$ is a continuous non-decreasing function with respect to $A$.

\end{itemize}

\begin{figure}[htbp]
  \centering
  \includegraphics{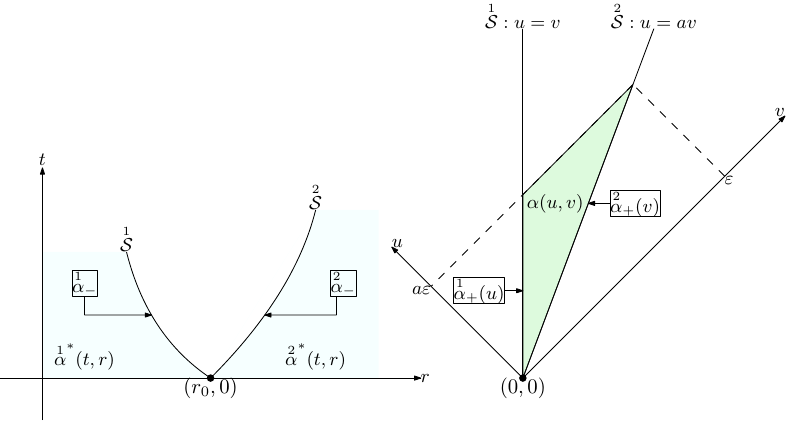}
  \caption{A description of notation}
  \label{p2}
\end{figure}

Figure~\ref{p2} provides a visual representation of the notations used in this paper. The left picture corresponds to the notations of the state ahead, and the right picture corresponds to the notations of the state behind.

\section{Euler equations}\label{3}

\subsection{Derivation of Euler equations}\label{3.1}

We begin by considering the inviscid flow of compressible fluids in 3 dimensional space, where the fluid velocity $\textbf{v}=\textbf{v}(\textbf{x},t)$, pressure $p=p(\textbf{x},t)$, and density $\rho=\rho(\textbf{x},t)$. Let $\Omega(t)$ be the region occupied by the fluid in $\mathbb{R}^3$ at time $t$. The mass of the fluid in $\Omega(t)$ is $\int_{\Omega(t)}\rho dV$. By the conservation of mass, the integral does not depend on $t$, so we have $\frac{d}{dt}\int_{\Omega(t)}\rho dV=0.$

Using the transport formula in fluid mechanics, we obtain

\begin{equation*}
\int_{\Omega (t)} \frac{\partial\rho}{\partial t}dV+\int_{\partial\Omega(t)}\rho \textbf{v}\cdot\textbf{n} dS=0,
\end{equation*}
where $\textbf{n}$ is the unit outward normal vector of $\partial\Omega(t)$. Applying the Green formula, we have

\begin{equation*}
\int_{\Omega(t)}(\frac{\partial \rho}{\partial t}+div(\rho\textbf{v}))dV=0.
\end{equation*}
Since $\Omega(t)$ is arbitrary, we obtain the continuity equation

\begin{equation}\label{eq3.1}
\frac{\partial \rho}{\partial t}+div(\rho\textbf{v})=0.
\end{equation}
Next, we consider the conservation of momentum. By Newton's law, we have $\frac{d\textbf{P}}{dt}=\textbf{F},$ where $\textbf{P}$ is the momentum and $\textbf{F}$ is the external force. By definition, we have

\begin{equation*}
\frac{d}{dt}\int_{\Omega(t)}\rho\textbf{v}dV=-\int_{\partial\Omega(t)}p\textbf{n}dS=-\int_{\Omega(t)}\nabla p dV.
\end{equation*}
Expanding this equation in component form, we obtain

\begin{equation*}
\frac{d}{dt}\int_{\Omega(t)}\rho v^i dV=-\int_{\Omega(t)}\frac{\partial p}{\partial x^i} dV=\int_{\Omega(t)}(\frac{\partial(\rho v^i)}{\partial t}+div(\rho v^i\textbf{v}))dV.
\end{equation*}
By the arbitrariness of $\Omega(t)$, using the continuity equation obtained earlier, we can obtain the Euler equation

\begin{equation}\label{eq3.2}
\frac{\partial v^i}{\partial t}+\textbf{v}\cdot\nabla v^i=-\frac{1}{\rho}\frac{\partial p}{\partial x^i}.
\end{equation}

In rectangular coordinates, the continuity and Euler equations can be written in vector form as
\begin{equation*}
\begin{cases}
&\frac{\partial\rho}{\partial t}+\nabla\cdot(\rho\textbf{v})=0,\\
&\frac{\partial\textbf{v}}{\partial t}+\textbf{v}\cdot\nabla\textbf{v}=-\frac{1}{\rho}\nabla p,
\end{cases}
\end{equation*}
or

\begin{equation*}
\begin{cases}
&\frac{\partial\rho}{\partial t}+div_g(\rho\textbf{v})=0,\\
&\frac{\partial\textbf{v}}{\partial t}+\nabla_{\textbf{v}}\textbf{v}=-\frac{1}{\rho}\nabla p,
\end{cases}
\end{equation*}
where $g$ is the flat metric in the 3 dimensional Euclidean space, $div$ represents the divergence, and the two $\nabla$ operators in the second equation represent the contact and gradient, respectively.

In spherical coordinates, assuming spherical symmetry, we obtain the continuity and Euler equations as

\begin{equation}\label{eq3.3}
\partial_t \rho +\partial_r(\rho w)+\frac{2}{r}(\rho w)=0,
\end{equation}
\begin{equation}\label{eq3.4}
\partial_t w+w\partial_r w+\frac{1}{\rho}\partial_r p=0,
\end{equation}
where $w$ is the radial velocity. We assume that the fluid is barotropic, so $p=p(\rho)$ is a given smooth function that satisfies $\frac{dp}{d\rho}(\rho)>0$. We do not consider entropy.

\subsection{Riemann invariants of the main part of the system}\label{3.2}

In this section, we define the Riemann invariants $\alpha$ and $\beta$ as follows:

\begin{equation}\label{eq3.5}
\alpha:=\int^\rho \frac{\eta(\rho^{'})}{\rho^{'}} d\rho^{'}+w,\ \ \ \beta:=\int^\rho \frac{\eta(\rho^{'})}{\rho^{'}} d\rho^{'}-w,
\end{equation}
where $\eta:=\sqrt{\frac{dp}{d\rho}}$ represents the speed of sound. Using this definition, we obtain $w=\frac{\alpha-\beta}{2}.$

We also define
\begin{equation}\label{eq3.6}
c_{out}:=w+\eta,\ \ \ c_{in}:=w-\eta,
\end{equation}
\begin{equation}\label{eq3.7}
L_{out}:=\partial_t+c_{out}\partial_r,\ \ \ L_{in}:=\partial_t+c_{in}\partial_r.
\end{equation}
By direct calculation, we can derive the expressions for $L_{out}\alpha$ and $L_{in}\beta$:

\begin{equation}\label{eq3.8}
\begin{aligned}
L_{out}\alpha&=\partial_t\alpha+c_{out}\partial_r\alpha=-\frac{2\eta w}{r},\ \ \ L_{in}\beta&=\partial_t\beta+c_{in}\partial_r\beta=-\frac{2\eta w}{r}.
\end{aligned}
\end{equation}

Next, we compute the Jacobian matrix of $\alpha$ and $\beta$ with respect to $\rho$ and $w$

\begin{equation}\label{eq3.9}
\frac{\partial(\alpha,\beta)}{\partial(\rho,w)}=
\begin{pmatrix}
\frac{\eta}{\rho} & 1\\
\frac{\eta}{\rho} & -1
\end{pmatrix}.
\end{equation}
This matrix is nonsingular, so we can compute its inverse

\begin{equation}\label{eq3.10}
\frac{\partial(\rho,w)}{\partial(\alpha,\beta)}=
\begin{pmatrix}
\frac{\rho}{2\eta} & \frac{\rho}{2\eta}\\
\frac{1}{2} & -\frac{1}{2}
\end{pmatrix}.
\end{equation}
This concludes our discussion of the Riemann invariants of the main part of the system.

\subsection{Jump conditions and determinism condition}\label{3.3}

In this section, we discuss the jump conditions and determinism condition for the shock wave. Assuming that there is a differentiable curve $\xi(t)$, referred to as the shock curve, the quantities describing the fluid become discontinuous across this curve. And in the closures on each side of this curve, the differential equations are satisfied. The conservation of mass and momentum leads to the following equations, which describe the discontinuities and are known as the jump conditions:

\begin{equation}\label{eq3.11}
[\rho]V=[\rho w],
\end{equation}
\begin{equation}\label{eq3.12}
[\rho w]V=[\rho w^2+p],
\end{equation}
where $V=\frac{d\xi}{dt}$ is the speed of the shock, and $[f]$ denotes the jump that occurs after the function $f$ crosses the curve $\xi$, given by  $[f]=f_+-f_-,$ where $f_+$ and $f_-$ are the quantities in the state behind and ahead of the shock, respectively.

\begin{figure}[htbp]
  \centering
  \includegraphics{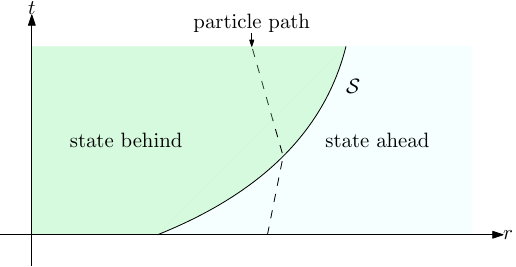}
  \caption{The state ahead and behind of the shock.}
  \label{p3}
\end{figure}

The state ahead and behind of the shock are illustrated in Figure \ref{p3}. Furthermore, the determinism condition states that the speed of the shock is supersonic with respect to the state ahead and subsonic with respect to the state behind.

\section{Shock wave interaction problem}\label{4}

\subsection{The states ahead of the emerged shocks}\label{4.1}

In this section, we discuss the shock wave interaction problem. We consider two sets of initial values of $\rho$ and $w$ on $r\ge r_0$ and $r\le r_0$ at time $t=0$. Each set of initial values has a future development, denoted by ${\overset{i}{\rho}}^*(t,r)$, ${\overset{i}{w}}^*(t,r)$, $i=1,2$.

They are solutions to the equations of motion and satisfy the initial values at $t=0$. These developments of initial data are bounded in the past by $t = 0$ and in the future by $t=\overset{i}{T},i=1,2$. Furthermore, the development of the data given for $r\ge r_0$ is bounded to the left by a right moving characteristic originating at the interaction point $(t,r)=(0,r_0)$ and the development of the data given for $r \le r_0$ is bounded to the right by a left moving characteristic originating at the interaction point $(t,r)=(0,r_0)$. We define the left moving characteristic as $\overset{1}{\mathcal{B}}$ and the right moving characteristic as $\overset{2}{\mathcal{B}}$.

At the point of interaction, the quantities ahead of the shocks are given by
\begin{equation}\label{eq4.1}
\overset{i}{\rho}_-={\overset{i}{\rho}}^*(0,0),\ \ \ \overset{i}{w}_-={\overset{i}{w}}^*(0,0),\ \ \ i=1,2.
\end{equation}
At the interaction point in the state behind, we obviously have
\begin{equation}\label{eq4.2}
\overset{1}{\rho}_+=\overset{2}{\rho}_+,\ \ \ \overset{1}{w}_+=\overset{2}{w}_+.
\end{equation}
Using the jump conditions \eqref{eq3.11} and \eqref{eq3.12}, we can calculate the four values at the interaction point in the state behind, denoted by 
\begin{equation}\label{eq4.3}
\overset{i}{\rho}_+:=\rho_0,\ \ \ \overset{i}{w}_+:=w_0,\ \ \ \overset{i}{V}:=\overset{i}{V}_0,\ \ \ i=1,2.
\end{equation}

These four quantities should satisfy the determinism condition corresponding to $\overset{1}{\mathcal{S}}$ and $\overset{2}{\mathcal{S}}$. After changing the frame of reference, we can set $w_0=0$ without affecting the determinism condition. The determinism condition for the left moving shock curve is 
\begin{equation}\label{eq4.4}
-\eta_0<\overset{1}{V}_0<({\overset{1}{c_{in0}}}^*)_0,
\end{equation}
where $\eta_0$ is the sound speed of the state behind, $({\overset{1}{c_{in0}}}^*)_0$  is the characteristic speed of the left moving characteristic in the state ahead of the left moving shock originating at the interaction point. All the above 3 quantities are negative, and we have

\begin{equation}\label{eq4.5}
\eta_0=\eta(\rho_0),\ \ \ ({\overset{1}{c_{in0}}}^*)_0={\overset{1}{w}}^*(0,0)-\eta({\overset{1}{\rho}}^*(0,0)).
\end{equation}
Similarly, the determinism condition for the right moving shock curve is 
\begin{equation}\label{eq4.6}
({\overset{2}{c_{out0}}}^*)_0<\overset{2}{V}_0<\eta_0,
\end{equation}
where $({\overset{2}{c_{out0}}}^*)_0$ is the characteristic speed of the right moving characteristic in the state ahead of the right moving shock originating at the interaction point. All the above 3 quantities are positive, and we have

\begin{equation}\label{eq4.7}
({\overset{2}{c_{out0}}}^*)_0={\overset{2}{w}}^*(0,0)+\eta({\overset{2}{\rho}}^*(0,0)).
\end{equation}

We obviously have
\begin{equation}\label{eq4.8}
\overset{1}{V}_0<0<\overset{2}{V}_0.
\end{equation}
The future development of the two sets of initial values, the solution of the jump conditions at the interaction point, and the determinism condition constitute the shock wave interaction problem, as shown in Figure \ref{p4}.

\begin{figure}[htbp]
  \centering
  \includegraphics{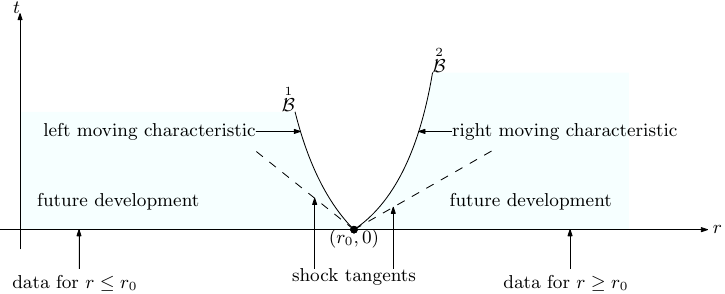}
  \caption{Future development of the two sets of initial values}
  \label{p4}
\end{figure}

\subsection{The description of the problem}\label{4.2}

The shock wave interaction problem seeks to determine two world lines, denoted by $\overset{i}{\mathcal{S}}$ for $i=1,2$, that originate from the interaction point $(t,r)=(0,r_0)$ and correspond to the future development of the two sets of initial values respectively, together with the solution of the equations of motion in the space time region limited by $\overset{i}{\mathcal{S}}$, such that crossing $\overset{i}{\mathcal{S}}$, the new solution in the state behind jumps with respect to the old solution in the future development, and the jump satisfies the jump conditions. In the region on the left side of $\overset{1}{\mathcal{S}}$, referred to as the state ahead of $\overset{1}{\mathcal{S}}$, the future development solution that corresponds to the initial value in $r\le r_0$ holds. In addition, in the region on the right side of $\overset{2}{\mathcal{S}}$, referred to as the state ahead of $\overset{2}{\mathcal{S}}$, the future development solution that corresponds to the initial value in $r\ge r_0$ holds. The region between $\overset{1}{\mathcal{S}}$ and $\overset{2}{\mathcal{S}}$ corresponds to the new solution, which is called the state behind of each $\overset{i}{\mathcal{S}}$. The determinism condition requires that each $\overset{i}{\mathcal{S}}$ is supersonic relative to the state ahead and subsonic relative to the state behind. We are going to bound the state behind also by a right moving characteristic, as shown in the dark shaded area in Figure \ref{p5}.

\begin{figure}[htbp]
  \centering
  \includegraphics{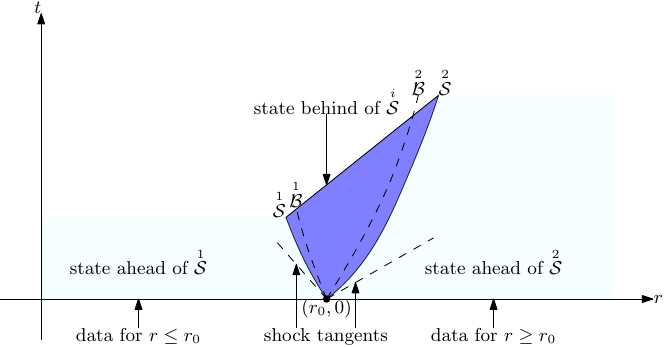}
  \caption{Shock wave interaction problem.}
  \label{p5}
\end{figure}

\section{Characteristic coordinates}\label{5}

\subsection{Choice of coordinates}\label{5.1}

By introducing coordinates $u$ and $v$, such that $u$ is constant along
integral curves of $L_{out}$ and $v$ is constant along integral curves of $L_{in}$. Then in $(u,v)$ coordinates, it is clear that $L_{out}u=0$ and $L_{in}v=0$. We define the value of $v$ on $r=r_0(t\ge 0)$ as $v:=\frac{\eta_0}{2}t$. Since the value of $v$ is unchanged on the integral curve corresponding to $L_{in}$, we can obtain the value of $v$ in the state behind. Naturally, we can obtain the value of $v$ in the state behind along $\overset{1}{\mathcal{S}}$. Let $u$ and $v$ on $\overset{1}{\mathcal{S}}$ be equal. Because $u$ is constant along the integral curve of $L_{out}$, we can obtain the value of $u$ and $v$ for the entire region in the state behind. In addition, $t$ and $r$ are obviously smooth with respect to the coordinates so determined. By calculating the Jacobian matrix of $t$ and $r$ with respect to $u$ and $v$, we get the following characteristic equations:

\begin{equation}\label{eq5.1}
\frac{\partial r}{\partial v}=c_{out}\frac{\partial t}{\partial v},\ \ \ \frac{\partial r}{\partial u}=c_{in}\frac{\partial t}{\partial u}.
\end{equation}

Through the transformation of coordinates, we can satisfy the following four conditions: 

(1) The origin $(u,v)=(0,0)$ corresponds to the interaction point $(t,r)=(0,r_0)$;

(2) The left moving shock $\overset{1}{\mathcal{S}}$ corresponds to $u=v$;

(3) $\frac{\partial r}{\partial v}(0,0)=1$;

(4) The right moving shock $\overset{2}{\mathcal{S}}$ corresponds to $u=av$, where $0<a<1$.

The feasibility of choosing these coordinates is as follows: Condition (1) is straightforward, requiring only that the origin of the coordinates be established at the interaction point $(0,r_0)$. Regarding condition (2), the expression corresponding to $\overset{1}{\mathcal{S}}$ already satisfies $u=v$. Furthermore, $(\alpha,\beta,t,r)$ is smooth with respect to $(u,v)$. Suppose that $r=r_0(t\ge 0)$ in the $(u,v)$ coordinate system is expressed as $u=g(v)$, where $g$ is a smooth function. According to the determinism condition, $0<g^{'}(0)<\infty$. At the origin, we have $\frac{\partial v}{\partial t}(0,r_0)=\frac{\eta_0}{2}$, $\frac{\partial u}{\partial t}(0,r_0)=\frac{\eta_0}{2}g^{'}(0)$.

Applying $L_{out}$ and $L_{in}$ to $u$ and $v$, respectively, we obtain

\begin{equation*}
L_{out} u=0=\frac{\partial u}{\partial t}+c_{out}\frac{\partial u}{\partial r},\ \ \ L_{in} v=0=\frac{\partial v}{\partial t}+c_{in}\frac{\partial v}{\partial r}.
\end{equation*}
Thus, at the origin, we get

\begin{equation}\label{eq5.2}
\frac{\partial(u,v)}{\partial(t,r)}(0,r_0)=\begin{pmatrix}
\frac{\eta_0}{2}g^{'}(0) & -\frac{g^{'}(0)}{2}\\
\frac{\eta_0}{2} & \frac{1}{2}
\end{pmatrix}.
\end{equation}
This matrix is non-degenerate, and its inverse matrix is

\begin{equation}\label{eq5.3}
\frac{\partial(t,r)}{\partial(u,v)}(0,0)=\begin{pmatrix}
\frac{1}{\eta_0 g^{'}(0)} & \frac{1}{\eta_0}\\
-\frac{1}{g^{'}(0)} & 1
\end{pmatrix}.
\end{equation}
Thus, we have satisfied condition (3).

To satisfy condition (4), we assume that the expression of the right moving shock curve in the $(u,v)$ coordinate system is $u=f(v)$, where $f(0)=0$ and $f$ is smooth. We compute the value of $f^{'}(0)$ by considering a vector that is tangent to the left moving shock curve $\overset{1}{\mathcal{S}}$ at the point of interaction: $\overset{1}{T}=\left(\partial_t+\overset{1}{V}_0 \partial_r\right)_{(0,0)}$, where $\overset{1}{V}_0$ is the speed of $\overset{1}{\mathcal{S}}$ at the interaction point calculated by jump conditions. Using the transformation of coordinates, we obtain an expression for $\overset{1}{T}$ in terms of the $(u,v)$ coordinates:

\begin{equation*}
\overset{1}{T}=\left(\frac{\eta_0}{2}g^{'}(0)-\frac{1}{2}g^{'}(0)\overset{1}{V}_0\right)\partial_u+\left(\frac{\eta_0}{2}+\frac{\overset{1}{V}_0}{2}\right)\partial_v.
\end{equation*}
Since the slope of $\overset{1}{S}$ in the $(u,v)$ coordinates is 1, we can equate the coefficients of the $\partial_u$ and $\partial_v$ terms to obtain

\begin{equation}\label{eq5.4}
g^{'}(0)=\frac{\eta_0+\overset{1}{V}_0}{\eta_0-\overset{1}{V}_0}.
\end{equation}

Similarly, we can compute a vector that is tangent to the right moving shock curve $\overset{2}{\mathcal{S}}$ at the point of interaction and obtain

\begin{equation}\label{eq5.5}
f^{'}(0)=\frac{\eta_0+\overset{1}{V}_0}{\eta_0-\overset{1}{V}_0}\frac{\eta_0-\overset{2}{V}_0}{\eta_0+\overset{2}{V}_0}.
\end{equation}

By the determinism condition, we know that 

\begin{equation}\label{eq5.6}
0<f^{'}(0)<1.
\end{equation}

We can apply a coordinate transformation to $(u,v)$ by defining $(\tilde{u},\tilde{v})=(\phi(u),\phi(v))$, where $\phi$ is a function that is strictly increasing near the origin and belongs to class $C^2$ such that $\phi(0)=0$ and $\phi^{'}(0)=1$. Therefore, $t$ and $r$ as functions of $\tilde{u}$ and $\tilde{v}$ belong to $C^2$. Moreover, we can show that conditions (1), (2), and (3) are still satisfied under this transformation. To construct $\phi$, we define iterative sequences 

\begin{equation*}
\phi_n(x):=\frac{f^{(n)}(x)}{a^n},\ \ \ x\in[0,\varepsilon^*],
\end{equation*}
where $a=f^{'}(0)$. Here $f^{(n)}$ defines the $n$-fold composition of the function $f$.

We aim to verify the equation of the right moving shock curve $\overset{2}{\mathcal{S}}$ in the new $(\tilde{u},\tilde{v})$ coordinate system. Let us assume that when $\varepsilon^*$ is sufficiently small, $\phi_n(x)$ converges uniformly, and its limit function is defined as $\phi(x)$, which satisfies the previously defined requirements. Then, by the definition of $\phi_n(x)$, we can obtain $\phi_n\circ f=a\phi_{n+1}.$

Upon taking the limit with respect to $n$, we can get condition (4): $\tilde{u}=\phi\circ f\circ\phi^{-1}(\tilde{v})=a\tilde{v}$. To prove the feasibility of condition (4), we first consider the expression for $\phi_{n+1}(x)$ obtained from the definition of $\phi_n(x)$, which gives $\phi_{n+1}(x)=\frac{f^{(n+1)}(x)}{a^{n+1}}$.

Expanding $f(x)$ and assuming $\varepsilon^*>0$, $\delta>0$ and $C>0$ such that

$$a(1+\delta)^2<1,|f(x)|\le (1+\delta)ax, \left|\frac{f(x)-ax}{x^2}\right|\le C,\left|f^{''}(x)\right|\le C,\forall |x|\le\varepsilon^*.$$
Then we can derive the following inequalities for $x\in [0,\varepsilon^*]$:

$$|f^{(n)}(x)|\le (1+\delta)^n a^n x,$$
$$|\phi_n(x)|=\left|\frac{f^{(n)}(x)}{a^n}\right|\le (1+\delta)^n x.$$

Thus, we find that

$$\left|\frac{\phi_{n+1}(x)-\phi_n(x)}{a^n \phi_n(x)}\right|=\left|\frac{f(a^n\phi_n(x))-a(a^n\phi_n(x))}{(a^n\phi_n(x))^2}\cdot a^{-1} \phi_n(x)\right|\le C\left|\phi_n(x)\right|.$$
This is because $a^n\phi_n(x)=f^{(n)}(x)\le x\le \varepsilon^*$. We need to clarify that the $C$ used here differs from the previous $C$ by a constant factor. Thus, we obtain

$$\left|\phi_{n+1}(x)-\phi_n(x)\right|\le Ca^n\left|\phi_n(x)\right|^2.$$
To prove that $\phi_n(x)$ converges uniformly, we consider the series $\mathop{\sum}\limits_{n=N}^{\infty}|\phi_{n+1}(x)-\phi_n(x)|$ for some integer $N>0$. Using the previous conclusion, we can derive the inequality

$$\sum_{n=N}^{\infty}|\phi_{n+1}(x)-\phi_n(x)|\le C \sum_{n=N}^{\infty}a^n (1+\delta)^{2n}(\varepsilon^*)^2.$$
Since $a(1+\delta)^2<1$, the above series converges, implying that $\phi_n(x)$ converges uniformly. So we have $\phi(0)=\mathop{lim}\limits_{n\rightarrow\infty}\phi_n(0)=0$. Taking the derivative of $\phi_n(x)$, we can obtain $\phi_n^{'}(x)=\frac{f^{'}(f^{(n-1)}(x))\cdot\cdot\cdot f^{'}(x)}{a^n}$. Using this result, we can rewrite the expression for $\phi_{n+1}^{'}(x)-\phi_n^{'}(x)$ as follows:

\begin{equation*}
\phi_{n+1}^{'}(x)-\phi_n^{'}(x)=\frac{[f^{'}(f^{(n)}(x))-a]f^{'}(f^{(n-1)}(x))\cdot\cdot\cdot f^{'}(x)}{a^{n+1}}.
\end{equation*}
Next, we use the fact that $\left|f^{'}(x)-f^{'}(0)\right|=\left|\int_0^x f^{''}(\xi)d\xi\right|\le Cx$, where $C$ is a fixed constant, to derive upper bounds for $\left|f^{'}(f^{(n)}(x))-a\right|$. Specifically, if we set $m=a(1+\delta)$, then we have $0<m<1$, and

\begin{equation*}
\left|f^{'}(f^{(n)}(x))-a\right|\le C\left|f^{(n)}(x)\right|\le C(1+\delta)^n a^n x =Cm^n x,
\end{equation*}
\begin{equation*}
\left|f^{'}(f^{(n-1)}(x))\right|\le a+C m^{n-1}x,
\end{equation*}
\begin{equation*}
\cdot\cdot\cdot,
\end{equation*}
\begin{equation*}
\left|f^{'}(x)\right|\le a+Cx.
\end{equation*}

We set $C_1:=C\varepsilon^*$ and $C_2:=\frac{C_1}{a}$. Using these constants, we can get the following estimates for the expression of $\mathop{\sum}\limits_{n=N}^{\infty}\left|\phi_{n+1}^{'}(x)-\phi_n^{'}(x)\right|$:

\begin{equation*}
\begin{aligned}
\mathop{\sum}\limits_{n=N}^{\infty}\left|\phi_{n+1}^{'}(x)-\phi_n^{'}(x)\right|&\le \mathop{\sum}\limits_{n=N}^{\infty} Cm^n x\frac{(a+Cm^{n-1}x)\cdot\cdot\cdot(a+Cx)}{a^{n+1}}\\
&\le \mathop{\sum}\limits_{n=N}^{\infty} C_1 m^n\frac{(a+C_1 m^{n-1})\cdot\cdot\cdot(a+C_1)}{a^{n+1}}\\
&\le \mathop{\sum}\limits_{n=N}^{\infty} C_2 m^n\underbrace{(1+C_2 m^{n-1})\cdot\cdot\cdot (1+C_2)}_{h_n}.
\end{aligned}
\end{equation*}
Note that $h_n$ is greater than 1. If we take the logarithm of it, we get

\begin{equation*}
log h_n=log (1+C_2)+\cdot\cdot\cdot +log(1+C_2 m^{n-1})\le \mathop{\sum}\limits_{n=0}^{\infty} log (1+C_2 m^n)\le \mathop{\sum}\limits_{n=0}^{\infty} C_2 m^n<log M,
\end{equation*}
where $M$ is a fixed constant. That is, the sequence $\{h_n\}$ has a uniform upper bound. Therefore, $\mathop{\sum}\limits_{n=N}^{\infty}\left| \phi_{n+1}^{'}(x)-\phi_n^{'}(x)\right|$ converges uniformly. According to the uniformly convergent property of the derivative sequence, the limit function $\phi$ is $C^1$ on the interval $[0,\varepsilon^*]$ and $\phi_n^{'}(x)$ converges uniformly to $\phi^{'}(x)$. So we have $\phi^{'}(0)=\mathop{lim}\limits_{n\rightarrow \infty} \phi_n^{'}(0)=1$.

We first note that $\phi(x)$ is strictly increasing in $[0,\varepsilon^*]$ when $\varepsilon^*$ is small enough and we can prove that $\phi\in C^2([0,\varepsilon^*])$. To demonstrate this, we take the derivative of $\phi_n^{'}(x)$ once more and then obtain $\phi_n^{''}(x)$, which has $n$ terms:

\begin{equation*}
\phi_n^{''}(x)=\frac{f^{''}(f^{(n-1)}(x))[f^{'}(f^{(n-2)}(x))\cdot\cdot\cdot f^{'}(x)]^2}{a^n}+\cdot\cdot\cdot+\frac{f^{'}(f^{(n-1)}(x))\cdot\cdot\cdot f^{''}(x)}{a^n}.
\end{equation*}
This leads us to the expression for $\left|\phi_{n+1}^{''}(x)\right|$ which has $(n+1)$ terms:

\begin{equation*}
\phi_{n+1}^{''}(x)=\frac{f^{''}(f^{(n)}(x))[f^{'}(f^{(n-1)}(x))\cdot\cdot\cdot f^{'}(x)]^2}{a^{n+1}}+\cdot\cdot\cdot+\frac{f^{'}(f^{(n)}(x))\cdot\cdot\cdot f^{''}(x)}{a^{n+1}}.
\end{equation*}
If we subtract these two equations, we arrive at the following expression:

\begin{equation*}
\left|\phi_{n+1}^{''}(x)-\phi_n^{''}(x)\right|=\left|\frac{f^{''}(f^{(n)}(x))[f^{'}(f^{(n-1)}(x))\cdot\cdot\cdot f^{'}(x)]^2}{a^{n+1}}+\frac{f^{'}(f^{(n)}(x))-a}{a}\phi_n^{''}(x)\right|.
\end{equation*}

In order to estimate the above equation, we show that $\phi_n^{''}(x)$ is uniformly bounded in the interval $[0,\varepsilon^*]$, by utilizing the expression for $\phi_n^{''}(x)$ and demonstrating that its $k$th item ($1\le k\le n$) is less than or equal to $CM^2 a^{k-2}$.

\begin{equation*}
\begin{aligned}
&\left|\frac{f^{'}(f^{(n-1)}(x))\cdot\cdot\cdot f^{''}(f^{(k-1)}(x))[f^{'}(f^{(k-2)}(x))\cdot\cdot\cdot f^{'}(x)]^2}{a^n}\right|\\
\le& \frac{C}{a^n} (a+Cm^{n-1}x)\cdot\cdot\cdot (a+Cm^k x)(a+Cm^{k-2}x)^2 \cdot\cdot\cdot(a+Cx)^2\\
\le& C a^{k-2} (1+C_2m^{n-1})\cdot\cdot\cdot (1+C_2m^k)(1+C_2m^{k-2})^2\cdot\cdot\cdot(1+C_2)^2\\
\le& Ch_n^2 a^{k-2}\\
\le& CM^2 a^{k-2}.
\end{aligned}
\end{equation*}
We conclude that $\phi_n^{''}(x)$ has a uniform bound
 
​\begin{equation*}
\left|\phi_n^{''}(x)\right|\le \mathop{\sum}\limits_{k=1}^{n} CM^2 a^{k-2}\le \frac{CM^2}{a(1-a)}.
\end{equation*}
Therefore,

\begin{equation*}
\begin{aligned}
\left|\phi_{n+1}^{''}(x)-\phi_n^{''}(x)\right|&\le C\frac{(a+Cm^{n-1}x)^2\cdot\cdot\cdot (a+Cx)^2}{a^{2n}}a^{n-1}+\frac{Cm^n x}{a}\left|\phi_n^{''}(x)\right|\\
&\le C (1+C_2 m^{n-1})^2\cdot\cdot\cdot (1+C_2)^2 a^{n-1}+\frac{C_2 C M^2}{a(1-a)} m^n\\
&\le CM^2 \left(a^{n-1}+\frac{C_2}{a(1-a)}m^n\right).
\end{aligned}
\end{equation*}
Thus, we get the series $\mathop{\sum}\limits_{n=N}^{\infty}\left|\phi_{n+1}^{''}(x)-\phi_n^{''}(x)\right|$ to converge uniformly. By using the uniformly convergent property of the derivative sequence, we observe that on the interval $[0,\varepsilon^*]$, the limiting function $\phi$ is $C^2$ and $\phi_n^{''}(x)$ converges uniformly to $\phi^{''}(x)$.

Next, we analyze the shock interaction problem in the region bounded by two shock curves $\overset{1}{\mathcal{S}}$ and $\overset{2}{\mathcal{S}}$, and the outgoing characteristic $u=a\varepsilon(\varepsilon\le \varepsilon^*)$. Let us define this region as $T_\varepsilon$ and express it mathematically as

\begin{equation}\label{eq5.7}
T_\varepsilon=\{(u,v)\in\mathbb{R}^2:0\le u\le v\le \frac{u}{a}\le \varepsilon\}.
\end{equation}

By performing a coordinate transformation, we can express $(\alpha,\beta,t,r)$ in terms of the transformed coordinates $(\tilde{u},\tilde{v})$ in $T_\varepsilon$, which is a set of $C^2$ functions with respect to $(\tilde{u},\tilde{v})$. For simplicity, we still refer to the new coordinates as $(u,v)$, and thus satisfy condition (4). A graphical representation of the region $T_\varepsilon$ is shown in Figure \ref{p6}.

\begin{figure}[htbp]
  \centering
  \includegraphics{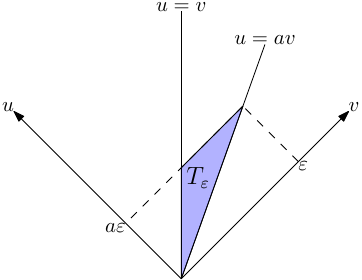}
  \caption{The region $T_\varepsilon$.}
  \label{p6}
\end{figure}

\subsection{Boundary conditions along the shock curve}\label{5.2}

This section discusses the boundary conditions for $\overset{1}{\mathcal{S}}$ and $\overset{2}{\mathcal{S}}$. The speeds of the two shocks are denoted as $\overset{i}{V}$, which satisfy the following boundary conditions:

\begin{equation}\label{eq5.8}
\overset{1}{V} d\overset{1}{t}_+=d\overset{1}{r}_+,\ \ \ \overset{2}{V} d\overset{2}{t}_+=d\overset{2}{r}_+.
\end{equation}
For any function $f(u,v)$, we use the notations $\overset{1}{f}_+(u)=f(u,u)$, $\overset{2}{f}_+(v)=f(av,v)$. Note that $u\in [0,a\varepsilon]$ and $v\in [0,\varepsilon]$.

Define

\begin{equation}\label{eq5.9}
\overset{1}{\Gamma}(u):=\frac{\overset{1}{c_{out+}}(u)}{\overset{1}{c_{in+}}(u)}\frac{\overset{1}{V}(u)-\overset{1}{c_{in+}}(u)}{\overset{1}{c_{out+}}(u)-\overset{1}{V}(u)},\ \ \ \overset{2}{\Gamma}(v):=a\frac{\overset{2}{c_{out+}}(v)}{\overset{2}{c_{in+}}(v)}\frac{\overset{2}{V}(v)-\overset{2}{c_{in+}}(v)}{\overset{2}{c_{out+}}(v)-\overset{2}{V}(v)}.
\end{equation}
According to the characteristic equations \eqref{eq5.1} obtained above, the boundary conditions \eqref{eq5.8} can be rewritten as

\begin{equation}\label{eq5.10}
\frac{\partial r}{\partial v}(u,u)=\frac{\partial r}{\partial u}(u,u)\overset{1}{\Gamma}(u),\ \ \ \frac{\partial r}{\partial v}(av,v)=\frac{\partial r}{\partial u}(av,v)\overset{2}{\Gamma}(v).
\end{equation}

The previous calculation yields the following partial derivative at point (0,0): $\frac{\partial r}{\partial u}(0,0)=-\frac{1}{g^{'}(0)}=-\frac{\eta_0-\overset{1}{V}_0}{\eta_0+\overset{1}{V}_0}$. By \eqref{eq5.10}, we can obtain

\begin{equation}\label{eq5.11}
\overset{1}{\Gamma}_0=\overset{2}{\Gamma}_0=\frac{1}{\frac{\partial r}{\partial u}(0,0)}=-g^{'}(0),
\end{equation}
which we refer to as $\Gamma_0$, where the subscript $0$ denotes the value at the interaction point. According to the determinism condition in \eqref{eq4.4}, it follows that $-\eta_0<\overset{1}{V}_0<w_0=0$. Thus, we get

\begin{equation}\label{eq5.12}
\Gamma_0=\overset{1}{\Gamma}_0=-\frac{\eta_0+\overset{1}{V}_0}{\eta_0-\overset{1}{V}_0}<0,
\end{equation}
and

\begin{equation}\label{eq5.13}
\frac{\partial r}{\partial u}(0,0)=\frac{1}{\Gamma_0}<0.
\end{equation}

The Riemann invariants $\alpha$ and $\beta$ along the left moving shock curve are

\begin{equation*}
\overset{1}{\alpha}_+(u)=\alpha(u,u),\ \ \ \overset{1}{\beta}_+(u)=\beta(u,u),
\end{equation*}
while the values along the right moving shock curve are

\begin{equation*}
\overset{2}{\alpha}_+(v)=\alpha(av,v),\ \ \ \overset{2}{\beta}_+(v)=\beta(av,v).
\end{equation*}

Based on previous calculations and the coordinate transformation, we can deduce that:

\begin{equation}\label{eq5.14}
L_{out}\alpha=\partial_t\alpha+c_{out}\partial_r\alpha=-\frac{2\eta w}{r}=\frac{\frac{\partial\alpha}{\partial v}}{\frac{\partial t}{\partial v}},
\end{equation}
\begin{equation}\label{eq5.15}
L_{in}\beta=\partial_t\beta+c_{in}\partial_r\beta=-\frac{2\eta w}{r}=\frac{\frac{\partial\beta}{\partial u}}{\frac{\partial t}{\partial u}}.
\end{equation}
These indicate that

\begin{equation}\label{eq5.16}
\frac{\partial\alpha}{\partial v}=\frac{\partial t}{\partial v} A(\alpha,\beta,r),\ \ \ \frac{\partial\beta}{\partial u}=\frac{\partial t}{\partial u} A(\alpha,\beta,r).
\end{equation}
Here

\begin{equation}\label{eq5.17}
A(\alpha,\beta,r)=-\frac{2\eta w}{r}.
\end{equation}
Using the definitions of $\overset{i}{\alpha}_+$ and $\overset{i}{\beta}_+$, $i=1,2$, we can write:

\begin{equation*}
\begin{cases}
&\overset{1}{\beta}_+(u)=\overset{2}{\beta}_+(u)+\int_{au}^u \left(\frac{\partial t}{\partial u}A\right)(\tau,u) d\tau,\\
&\overset{2}{\alpha}_+(v)=\overset{1}{\alpha}_+(av)+\int_{av}^v\left(\frac{\partial t}{\partial v} A\right)(av,s)ds.
\end{cases}
\end{equation*}

Note that the superscripts $1$ and $2$ denote whether the function corresponds to the left moving or right moving shock curve. Additionally, they indicate the range of values of the independent variables. For instance, $\overset{1}{\beta}_+(u)$ means that $u\in [0,a\varepsilon]$, while $\overset{2}{\beta}_+(u)$ means that $u\in [0,\varepsilon]$.

\subsection{Jump conditions and initial value calculation}\label{5.3}

The equations \eqref{eq3.11} and \eqref{eq3.12} that define the jump conditions are equivalent to

\begin{equation}\label{eq5.18}
V=\frac{[\rho w]}{[\rho]},
\end{equation}
\begin{equation}\label{eq5.19}
0=[\rho w]^2-[\rho w^2+p][\rho]=:I(\rho_+,\rho_-,w_+,w_-).
\end{equation}
By the definition of Riemann invariants, we can consider $\rho$ and $w$ as smooth functions of $\alpha$ and $\beta$. We can then define the function

\begin{equation}\label{eq5.20}
J(\alpha_+,\beta_+,\alpha_-,\beta_-):=I\left(\rho(\alpha_+,\beta_+),\rho(\alpha_-,\beta_-),w(\alpha_+,\beta_+),w(\alpha_-,\beta_-)\right),
\end{equation}
which means that the second jump condition is equivalent to

\begin{equation}\label{eq5.21}
J(\alpha_+,\beta_+,\alpha_-,\beta_-)=0.
\end{equation}
Following from the definition of Riemann invariants, we have $w_0=\frac{\alpha_0-\beta_0}{2}=0$, implying that $\alpha_0=\beta_0$. This means that at the point of interaction,

\begin{equation}\label{eq5.22}
J(\beta_0,\beta_0,\overset{i}{\alpha}_{-0},\overset{i}{\beta}_{-0})=0,\ \ \ i=1,2,
\end{equation}
where the values in the state ahead at the interaction point are given by $\overset{i}{\alpha}_{-0}={\overset{i}{\alpha}}^*(0,0)$, ${\overset{i}{\beta}}_{-0}={\overset{i}{\beta}}^*(0,0)$, $i=1,2$.

These quantities can be obtained by the solution in the state ahead. Calculating further, we obtain

\begin{equation}\label{eq5.23}
\frac{\partial J}{\partial\alpha_+}(\alpha_+,\beta_+,\alpha_-,\beta_-)=-\frac{[\rho]\rho_+}{2\eta_+}(c_{out+}-V)^2,
\end{equation}

and

\begin{equation}\label{eq5.24}
\frac{\partial J}{\partial\beta_+}(\alpha_+,\beta_+,\alpha_-,\beta_-)=-\frac{[\rho]\rho_+}{2\eta_+}(V-c_{in+})^2.
\end{equation}

The right hand side of the above two equations depends on the variables $\rho_{\pm}$ and $w_{\pm}$, as well as $\alpha_{\pm}$ and $\beta_{\pm}$. Furthermore, based on the earlier analysis in \eqref{eq4.8}, we can infer that $\overset{1}{V}_0-\overset{1}{c_{out}}_{+0}=\overset{1}{V}_0-\eta_0<0$ and $\overset{2}{V}_0-\overset{2}{c_{in}}_{+0}=\overset{2}{V}_0+\eta_0>0$ hold true at the point of interaction.

So at the point of interaction, we have

\begin{equation}\label{eq5.25}
\frac{\partial J}{\partial\alpha_+}(\beta_0,\beta_0,\overset{1}{\alpha}_{-0},\overset{1}{\beta}_{-0})\neq 0,
\end{equation}
\begin{equation}\label{eq5.26}
\frac{\partial J}{\partial \beta_+}(\beta_0,\beta_0,\overset{2}{\alpha}_{-0},\overset{2}{\beta}_{-0})\neq 0.
\end{equation}
By the implicit function theorem, there exist smooth functions $\overset{1}{H}$ and $\overset{2}{H}$ that describe the relations among the Riemann invariants. When $\varepsilon$ is small enough, we have

\begin{equation}\label{eq5.27}
\overset{1}{\alpha}_+(u)=\overset{1}{H}(\overset{1}{\beta}_+(u),\overset{1}{\alpha}_-(u),\overset{1}{\beta}_-(u)),
\end{equation}
\begin{equation}\label{eq5.28}
\overset{2}{\beta}_+(v)=\overset{2}{H}(\overset{2}{\alpha}_+(v),\overset{2}{\alpha}_-(v),\overset{2}{\beta}_-(v)).
\end{equation}
So we have

\begin{equation}\label{eq5.29}
J\left(\overset{1}{H}(\overset{1}{\beta}_+,\overset{1}{\alpha}_-,\overset{1}{\beta}_-),\overset{1}{\beta}_+,\overset{1}{\alpha}_-,\overset{1}{\beta}_-\right)=0,
\end{equation}
\begin{equation}\label{eq5.30}
J\left(\overset{2}{\alpha}_+,\overset{2}{H}(\overset{2}{\alpha}_+,\overset{2}{\alpha}_-,\overset{2}{\beta}_-),\overset{2}{\alpha}_-,\overset{2}{\beta}_-\right)=0.
\end{equation}
This means that, at the point of interaction, $\beta_0$ is a common value of $\overset{1}{H}$ and $\overset{2}{H}$, namely $\beta_0=\overset{1}{H}(\beta_0,\overset{1}{\alpha}_{-0},\overset{1}{\beta}_{-0})=\overset{2}{H}(\beta_0,\overset{2}{\alpha}_{-0},\overset{2}{\beta}_{-0})$.

From \eqref{eq5.27}, we can obtain 

\begin{equation}\label{eq5.31}
\begin{aligned}
\frac{d \overset{1}{\alpha}_+}{du}(u)=&\overset{1}{F}(\overset{1}{\beta}_+(u),\overset{1}{\alpha}_-(u),\overset{1}{\beta}_-(u))\frac{d\overset{1}{\beta}_+}{du}(u)\\
&+\overset{1}{M}_1(\overset{1}{\beta}_+(u),\overset{1}{\alpha}_-(u),\overset{1}{\beta}_-(u))\frac{d\overset{1}{\alpha}_-}{du}(u)\\
&+\overset{1}{M}_2(\overset{1}{\beta}_+(u),\overset{1}{\alpha}_-(u),\overset{1}{\beta}_-(u))\frac{d\overset{1}{\beta}_-}{du}(u)
\end{aligned}
\end{equation}
by taking the derivative of $\overset{1}{\alpha}_+$ with respect to $u$, where

\begin{equation}\label{eq5.32}
\overset{1}{F}(\beta_+,\alpha_-,\beta_-):=\frac{\partial\overset{1}{H}}{\partial\beta_+}(\beta_+,\alpha_-,\beta_-)=-\frac{\frac{\partial J}{\partial\beta_+}\left(\overset{1}{H}(\beta_+,\alpha_-,\beta_-),\beta_+,\alpha_-,\beta_-\right)}{\frac{\partial J}{\partial\alpha_+}\left(\overset{1}{H}(\beta_+,\alpha_-,\beta_-),\beta_+,\alpha_-,\beta_-\right)}.
\end{equation}
Expressions for $\overset{1}{M}_1$ and $\overset{1}{M}_2$ are similar, and based on equations \eqref{eq5.23} and \eqref{eq5.24}, we obtain the following expression:

\begin{equation}\label{eq5.33}
\overset{1}{F}(\overset{1}{\beta}_+,\overset{1}{\alpha}_-,\overset{1}{\beta}_-)=-\left(\frac{\overset{1}{V}-\overset{1}{c_{in+}}}{\overset{1}{c_{out+}}-\overset{1}{V}}\right)^2.
\end{equation}
Similarly, we have

\begin{equation}\label{eq5.34}
\frac{d\overset{2}{\beta}_+}{dv}(v)=\overset{2}{F}(\overset{2}{\alpha}_+(v),\overset{2}{\alpha}_-(v),\overset{2}{\beta}_-(v))\frac{d\overset{2}{\alpha}_+}{dv}(v)+\cdot\cdot\cdot,
\end{equation}
where $\cdot\cdot\cdot$ corresponds to the last two lines in equation \eqref{eq5.31}, where

\begin{equation}\label{eq5.35}
\overset{2}{F}(\alpha_+,\alpha_-,\beta_-):=\frac{\partial \overset{2}{H}}{\partial\alpha_+}(\alpha_+,\alpha_-,\beta_-)=-\frac{\frac{\partial J}{\partial\alpha_+}\left(\alpha_+,\overset{2}{H}(\alpha_+,\alpha_-,\beta_-),\alpha_-,\beta_-\right)}{\frac{\partial J}{\partial\beta_+}\left(\alpha_+,\overset{2}{H}(\alpha_+,\alpha_-,\beta_-),\alpha_-,\beta_-\right)}.
\end{equation}
Similarly, according to \eqref{eq5.23} and \eqref{eq5.24}, we have

\begin{equation}\label{eq5.36}
\overset{2}{F}(\overset{2}{\alpha}_+,\overset{2}{\alpha}_-,\overset{2}{\beta}_-)=-\left(\frac{\overset{2}{c_{out+}}-\overset{2}{V}}{\overset{2}{V}-\overset{2}{c_{in+}}}\right)^2.
\end{equation}

At the interaction point, according to \eqref{eq5.5}, we can know

\begin{equation}\label{eq5.37}
\overset{1}{F}_0\overset{2}{F}_0=\left(\frac{\overset{1}{V}_0+\eta_0}{\eta_0-\overset{1}{V}_0}\frac{\eta_0-\overset{2}{V}_0}{\overset{2}{V}_0+\eta_0}\right)^2=a^2,
\end{equation}
where $\overset{i}{F}_0=\overset{i}{F}(\beta_0,\overset{i}{\alpha}_{-0},\overset{i}{\beta}_{-0}),\ \ \ i=1,2$.

We define

\begin{equation}\label{eq5.38}
\alpha^{'}_0:=\frac{d\overset{1}{\alpha}_+}{du}(0),\ \ \ \beta^{'}_0:=\frac{d\overset{2}{\beta}_+}{dv}(0).
\end{equation}
Since we have $A_0=A(\beta_0,\beta_0,r_0)=0$ at the interaction point, based on the definitions of $\overset{i}{\alpha}_+$ and $\overset{i}{\beta}_+$, we have $\frac{\partial\alpha}{\partial v}(0,0)=\frac{\partial\beta}{\partial u}(0,0)=0$ and

\begin{equation*}
\begin{cases}
&\alpha^{'}_0=\frac{d\overset{1}{\alpha}_+}{du}(0)=\frac{1}{a}\frac{d\overset{2}{\alpha}_+}{dv}(0),\\
&\beta^{'}_0=\frac{d\overset{2}{\beta}_+}{dv}(0)=\frac{d\overset{1}{\beta}_+}{du}(0),
\end{cases}
\end{equation*}
which implies

\begin{equation}\label{eq5.39}
\alpha^{'}_0=\frac{\partial\alpha}{\partial u}(0,0),\ \ \ \beta^{'}_0=\frac{\partial\beta}{\partial v}(0,0).
\end{equation}
With these equations, we can write

\begin{equation}\label{eq5.40}
\alpha^{'}_0=\overset{1}{F}_0\beta^{'}_0+\overset{1}{M}_{10}\frac{d\overset{1}{\alpha}_-}{du}(0)+\overset{1}{M}_{20}\frac{d\overset{1}{\beta}_-}{du}(0),
\end{equation}
\begin{equation}\label{eq5.41}
\beta_0^{'}=a\overset{2}{F}_0\alpha^{'}_0+\overset{2}{M}_{10}\frac{d\overset{2}{\alpha}_-}{dv}(0)+\overset{2}{M}_{20}\frac{d\overset{2}{\beta}_-}{dv}(0).
\end{equation}

For the Riemann invariants in the state ahead, we have

\begin{equation}\label{eq5.42}
\overset{1}{\alpha}_-(u)={\overset{1}{\alpha}}^*(\overset{1}{t}_+(u),\overset{1}{r}_+(u)),\ \ \ \overset{1}{\beta}_-(u)={\overset{1}{\beta}}^*(\overset{1}{t}_+(u),\overset{1}{r}_+(u)).
\end{equation}
By taking the derivative at the interaction point, we can derive

\begin{equation*}
\begin{cases}
&\frac{d\overset{1}{\alpha}_-}{du}(0)=(\frac{\partial{\overset{1}{\alpha}}^*}{\partial t})_0\frac{d\overset{1}{t}_+}{du}(0)+(\frac{\partial{\overset{1}{\alpha}}^*}{\partial r})_0\frac{d\overset{1}{r}_+}{du}(0),\\
&\frac{d\overset{1}{\beta}_-}{du}(0)=(\frac{\partial{\overset{1}{\beta}}^*}{\partial t})_0\frac{d\overset{1}{t}_+}{du}(0)+(\frac{\partial{\overset{1}{\beta}}^*}{\partial r})_0\frac{d\overset{1}{r}_+}{du}(0).
\end{cases}
\end{equation*}

The partial derivatives of ${\overset{1}{\alpha}}^*(t,r)$ and ${\overset{1}{\beta}}^*(t,r)$ can be obtained from the solutions in the state ahead of the left moving shock. Specifically, taking the derivative with respect to $u$ in the equations $\overset{1}{t}_+(u)=t(u,u)$ and $\overset{1}{r}_+(u)=r(u,u)$ yields

\begin{equation*}
\frac{d\overset{1}{t}_+}{du}(u)=\frac{\partial t}{\partial u}(u,u)+\frac{\partial t}{\partial v}(u,u),\ \ \ \frac{d\overset{1}{r}_+}{du}(u)=\frac{\partial r}{\partial u}(u,u)+\frac{\partial r}{\partial v}(u,u).
\end{equation*}

Since the value of $\Gamma_0$ has already been obtained in \eqref{eq5.12}, using \eqref{eq5.11}, we can give the Jacobian matrix of $t$ and $r$ with respect to $u$ and $v$ at the point $(0,0)$ as

\begin{equation}\label{eq5.43}
\frac{\partial(t,r)}{\partial(u,v)}(0,0)=\begin{pmatrix}
-\frac{1}{\eta_0}\frac{1}{\Gamma_0} & \frac{1}{\eta_0}\\
\frac{1}{\Gamma_0} & 1\\
\end{pmatrix}.
\end{equation}
Therefore, we have $\frac{d\overset{1}{t}_+}{du}(0)=\frac{1}{\eta_0}\left(-\frac{1}{\Gamma_0}+1\right)$, $\frac{d\overset{1}{r}_+}{du}(0)=\frac{1}{\Gamma_0}+1$, and 

\begin{equation}\label{eq5.44}
\frac{d\overset{1}{\alpha}_-}{du}(0)=\left(\frac{\partial{\overset{1}{\alpha}}^*}{\partial t}\right)_0\frac{1}{\eta_0}\left(-\frac{1}{\Gamma_0}+1\right)+\left(\frac{\partial{\overset{1}{\alpha}}^*}{\partial r}\right)_0\left(\frac{1}{\Gamma_0}+1\right),
\end{equation}
\begin{equation}\label{eq5.45}
\frac{d\overset{1}{\beta}_-}{du}(0)=\left(\frac{\partial{\overset{1}{\beta}}^*}{\partial t}\right)_0\frac{1}{\eta_0}\left(-\frac{1}{\Gamma_0}+1\right)+\left(\frac{\partial{\overset{1}{\beta}}^*}{\partial r}\right)_0\left(\frac{1}{\Gamma_0}+1\right).
\end{equation}

Similarly, consider $\overset{2}{t}_+(v)=t(av,v)$ and $\overset{2}{r}_+(v)=r(av,v)$, we arrive at

\begin{equation*}
\frac{d\overset{2}{t}_+}{dv}(v)=a\frac{\partial t}{\partial u}(av,v)+\frac{\partial t}{\partial v}(av,v),\ \ \ \frac{d\overset{2}{r}_+}{dv}(v)=a\frac{\partial r}{\partial u}(av,v)+\frac{\partial r}{\partial v}(av,v).
\end{equation*}
Therefore, we have

\begin{equation}\label{eq5.46}
\frac{d\overset{2}{\alpha}_-}{dv}(0)=\left(\frac{\partial{\overset{2}{\alpha}}^*}{\partial t}\right)_0\frac{1}{\eta_0}\left(-\frac{a}{\Gamma_0}+1\right)+\left(\frac{\partial{\overset{2}{\alpha}}^*}{\partial r}\right)_0\left(\frac{a}{\Gamma_0}+1\right),
\end{equation}
\begin{equation}\label{eq5.47}
\frac{d\overset{2}{\beta}_-}{dv}(0)=\left(\frac{\partial{\overset{2}{\beta}}^*}{\partial t}\right)_0\frac{1}{\eta_0}\left(-\frac{a}{\Gamma_0}+1\right)+\left(\frac{\partial{\overset{2}{\beta}}^*}{\partial r}\right)_0\left(\frac{a}{\Gamma_0}+1\right).
\end{equation}

Equations \eqref{eq5.40} and \eqref{eq5.41} can be expressed in terms of a matrix equation as follows:

\begin{equation}\label{eq5.48}
M\begin{pmatrix}
\alpha^{'}_0\\
\beta^{'}_0
\end{pmatrix}=\begin{pmatrix}
a_0\\
b_0
\end{pmatrix},
\end{equation}
where

\begin{equation*}
M=\begin{pmatrix}
1 & -\overset{1}{F}_0\\
-a\overset{2}{F}_0 & 1
\end{pmatrix},
\end{equation*}
\begin{equation*}
\begin{cases}
&a_0=\overset{1}{M}_{10}\frac{d\overset{1}{\alpha}_-}{du}(0)+\overset{1}{M}_{20}\frac{d\overset{1}{\beta}_-}{du}(0),\\
&b_0=\overset{2}{M}_{10}\frac{d\overset{2}{\alpha}_-}{dv}(0)+\overset{2}{M}_{20}\frac{d\overset{2}{\beta}_-}{dv}(0).
\end{cases}
\end{equation*}
Using equation \eqref{eq5.37}, we can show that the determinant of $M$ is given by $det(M)=1-a\overset{1}{F}_0\overset{2}{F}_0=1-a^3>0$. Since $det(M)$ is nonzero, $M$ is invertible. Thus, we can solve for $\alpha^{'}_0$ and $\beta^{'}_0$ as

\begin{equation}\label{eq5.49}
\begin{pmatrix}
\alpha^{'}_0\\
\beta^{'}_0
\end{pmatrix}=M^{-1}\begin{pmatrix}
a_0\\
b_0
\end{pmatrix}.
\end{equation}
Moreover, because the matrix $M$ is known, we can calculate the inverse matrix of $M$. And $a_0$ and $b_0$ can be obtained by the values in the state ahead at the point of interaction, the above equation can be regarded as the definition of the constants $\alpha^{'}_0$ and $\beta^{'}_0$.

\subsection{Equations for $\frac{\partial r}{\partial u}$ and $\frac{\partial r}{\partial v}$}\label{5.4}

In this section, we derive equations for $\frac{\partial r}{\partial u}$ and $\frac{\partial r}{\partial v}$ at a point $(u,v)$ in $T_\varepsilon$. To obtain $\frac{\partial r}{\partial u}$, we integrate along a right moving characteristic from the point $(u,u)$ on the left moving shock curve $\{u=v\}$ to the point $(u,v)$. This yields the following equation:

\begin{equation}\label{eq5.50}
\frac{\partial r}{\partial u}(u,v)=\frac{\partial r}{\partial u}(u,u)+\int_u^v\frac{\partial^2 r}{\partial u\partial v}(u,v^{'})dv^{'}.
\end{equation}
We integrate along the characteristic, and the first term in the right side of the above equation can be rewritten as

\begin{equation*}
\frac{\partial r}{\partial u}(u,u)=\frac{1}{\overset{1}{\Gamma}(u)}\left(\overset{2}{\Gamma}(u)\frac{\partial r}{\partial u}(au,u)+\int_{au}^u\frac{\partial^2 r}{\partial u\partial v}(u^{'},u)du^{'}\right).
\end{equation*}
So we have

\begin{equation}\label{eq5.51}
\frac{\partial r}{\partial u}(u,v)=\frac{\overset{2}{\Gamma}(u)}{\overset{1}{\Gamma}(u)}\frac{\partial r}{\partial u}(au,u)+\frac{1}{\overset{1}{\Gamma}(u)}\int_{au}^u\frac{\partial^2 r}{\partial u\partial v}(u^{'},u)du^{'}+\int_u^v\frac{\partial^2 r}{\partial u\partial v}(u,v^{'})dv^{'}.
\end{equation}

Similarly, we integrate from the point $(av,v)$ on the right moving shock curve along the left moving characteristic to the point $(u,v)$ to obtain the equation for $\frac{\partial r}{\partial v}(u,v)$:

\begin{equation}\label{eq5.52}
\frac{\partial r}{\partial v}(u,v)=\frac{\partial r}{\partial v}(av,v)+\int_{av}^u\frac{\partial^2 r}{\partial u\partial v}(u^{'},v)du^{'}.
\end{equation}
Using the boundary conditions, the first term on the right side of the above equation can be rewritten as

\begin{equation*}
\frac{\partial r}{\partial v}(av,v)=\overset{2}{\Gamma}(v)\left(\frac{1}{\overset{1}{\Gamma}(av)}\frac{\partial r}{\partial  v}(av,av)+\int_{av}^v\frac{\partial^2 r}{\partial u\partial v}(av,v^{'})dv^{'}\right).
\end{equation*}
So we have

\begin{equation}\label{eq5.53}
\frac{\partial r}{\partial v}(u,v)=\frac{\overset{2}{\Gamma}(v)}{\overset{1}{\Gamma}(av)}\frac{\partial r}{\partial v}(av,av)+\overset{2}{\Gamma}(v)\int_{av}^v\frac{\partial^2 r}{\partial u\partial v}(av,v^{'})dv^{'}+\int_{av}^u \frac{\partial^2 r}{\partial u\partial v}(u^{'},v)du^{'}.
\end{equation}

By taking the derivative of the characteristic equations in \eqref{eq5.1} and eliminating $t$, the following equation can be obtained:

\begin{equation*}
\frac{\partial^2 r}{\partial u\partial v}=\frac{1}{c_{out}-c_{in}}\left(\frac{c_{out}}{c_{in}}\frac{\partial c_{in}}{\partial v}\frac{\partial r}{\partial u}-\frac{c_{in}}{c_{out}}\frac{\partial c_{out}}{\partial u}\frac{\partial r}{\partial v}\right).
\end{equation*}
Introducing the variables 

\begin{equation}\label{eq5.54}
\mu:=\frac{1}{c_{out}-c_{in}}\frac{c_{out}}{c_{in}}\frac{\partial c_{in}}{\partial v},\ \ \ \nu:=-\frac{1}{c_{out}-c_{in}}\frac{c_{in}}{c_{out}}\frac{\partial c_{out}}{\partial u},
\end{equation}
the above equation takes the form

\begin{equation}\label{eq5.55}
\frac{\partial^2 r}{\partial u\partial v}(u,v)=\mu(u,v)\frac{\partial r}{\partial u}(u,v)+\nu(u,v)\frac{\partial r}{\partial v}(u,v).
\end{equation}

Figure \ref{p7} shows the integral paths of $\frac{\partial r}{\partial u}(u,v)$ (left) and $\frac{\partial r}{\partial v}(u,v)$ (right).

\begin{figure}[htbp]
  \centering
  \includegraphics{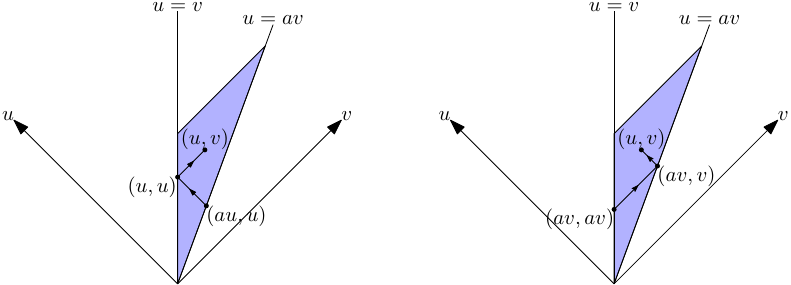}
  \caption{The left is the integral path of $\frac{\partial r}{\partial u}(u,v)$, and the right is the integral path of $\frac{\partial r}{\partial v}(u,v)$.}
  \label{p7}
\end{figure}

\subsection{Formulation of the problem}\label{5.5}

The problem of shock interaction can be formulated as follows: Find a solution of the following equations in $T_\varepsilon$:

\begin{equation}\label{eq5.56}
\frac{\partial r}{\partial u}=c_{in}\frac{\partial t}{\partial u},
\end{equation}
\begin{equation}\label{eq5.57}
\frac{\partial r}{\partial v}=c_{out}\frac{\partial t}{\partial v},
\end{equation}
\begin{equation}\label{eq5.58}
\frac{\partial\alpha}{\partial v}=A\frac{\partial t}{\partial v},
\end{equation}
\begin{equation}\label{eq5.59}
\frac{\partial\beta}{\partial u}=A\frac{\partial t}{\partial u}.
\end{equation}
Furthermore, we require that the boundary conditions 

\begin{equation}\label{eq5.60}
\frac{d\overset{1}{r}_+}{du}(u)=\overset{1}{V}(u)\frac{d\overset{1}{t}_+}{du}(u)
\end{equation}
\begin{equation}\label{eq5.61}
\frac{d\overset{2}{r}_+}{dv}(v)=\overset{2}{V}(v)\frac{d\overset{2}{t}_+}{dv}(v)
\end{equation}
along the two shocks be satisfied, where $\overset{1}{r}_+(u)=r(u,u)$, $\overset{1}{t}_+(u)=t(u,u)$, $\overset{2}{r}_+(v)=r(av,v)$, $\overset{2}{t}_+(v)=t(av,v)$ and

\begin{equation}\label{eq5.62}
\overset{i}{V}=\frac{[\overset{i}{\rho}\overset{i}{w}]}{[\overset{i}{\rho}]},\ \ \ i=1,2,
\end{equation}
where $\overset{1}{\rho}_{\pm}(u)=\rho(\overset{1}{\alpha}_{\pm}(u),\overset{1}{\beta}_{\pm}(u))$, $\overset{1}{w}_{\pm}(u)=w(\overset{1}{\alpha}_{\pm}(u),\overset{1}{\beta}_{\pm}(u))$, $\overset{2}{\rho}_{\pm}(v)=\rho(\overset{2}{\alpha}_{\pm}(v),\overset{2}{\beta}_{\pm}(v))$, $\overset{2}{w}_{\pm}(v)=w(\overset{2}{\alpha}_{\pm}(v),\overset{2}{\beta}_{\pm}(v))$, where $\rho(\alpha,\beta)$ and $w(\alpha,\beta)$ are smooth functions of their components and $\overset{1}{\alpha}_+(u)=\alpha(u,u)$, $\overset{1}{\beta}_+(u)=\beta(u,u)$, $\overset{1}{\alpha}_-(u)={\overset{1}{\alpha}}^*(\overset{1}{t}_+(u),\overset{1}{r}_+(u))$, $\overset{1}{\beta}_-(u)={\overset{1}{\beta}}^*(\overset{1}{t}_+(u),\overset{1}{r}_+(u))$, $\overset{2}{\alpha}_+(v)=\alpha(av,v)$, $\overset{2}{\beta}_+(v)=\beta(av,v)$, $\overset{2}{\alpha}_-(v)={\overset{2}{\alpha}}^*(\overset{2}{t}_+(v),\overset{2}{r}_+(v))$, $\overset{2}{\beta}_-(v)={\overset{2}{\beta}}^*(\overset{2}{t}_+(v),\overset{2}{r}_+(v))$.

Additionally, across the two shocks, we need the jump conditions to hold as follows:

\begin{equation}\label{eq5.63}
J(\overset{i}{\alpha}_+,\overset{i}{\beta}_+,\overset{i}{\alpha}_-,\overset{i}{\beta}_-)=0,\ \ \ i=1,2.
\end{equation}
And at the interaction point $(u,v)=(0,0)$, we should have

\begin{equation}\label{eq5.64}
\alpha(0,0)=\beta(0,0)=\beta_0,\ \ \ \frac{\partial\alpha}{\partial u}(0,0)=\alpha^{'}_0,\ \ \ \frac{\partial\beta}{\partial v}(0,0)=\beta^{'}_0,
\end{equation}
\begin{equation}\label{eq5.65}
\overset{i}{V}(0)=\overset{i}{V}_0,\ \ \ i=1,2.
\end{equation}

Finally, the determinism condition has to be satisfied:

\begin{equation}\label{eq5.66}
\overset{1}{c_{in}}_+(u)<\overset{1}{V}(u)<\overset{1}{c_{in}}_-(u),
\end{equation}
\begin{equation}\label{eq5.67}
\overset{2}{c_{out}}_-(v)<\overset{2}{V}(v)<\overset{2}{c_{out}}_+(v),
\end{equation}
where $\overset{1}{c_{in}}_\pm(u)=c_{in}(\overset{1}{\alpha}_{\pm}(u),\overset{1}{\beta}_{\pm}(u))$, $\overset{2}{c_{out}}_\pm(v)=c_{out}(\overset{2}{\alpha}_{\pm}(v),\overset{2}{\beta}_{\pm}(v))$.

\section{Solution of the interaction problem}\label{6}

Next, we will prove the existence(Section \ref{6.5}), uniqueness(Section \ref{6.7}), and higher regularity(Section \ref{7}) of the solution to the shock interaction problem. We begin by stating the following theorems:

\begin{theorem}[Existence]\label{existence}
For sufficiently small $\varepsilon$, there exists a solution $\alpha$, $\beta$, $t$, $r$ in $C^2(T_\varepsilon)$ to the shock interaction problem. Furthermore, for sufficiently small $\varepsilon$, the Jacobian matrix $\frac{\partial(t,r)}{\partial(u,v)}$ does not vanish in $T_\varepsilon$, which implies that $\alpha$ and $\beta$ are $C^2$ functions of $(t,r)$ on the image of $T_\varepsilon$ by the map $(u,v)\longmapsto (t(u,v),r(u,v))$. 
\end{theorem}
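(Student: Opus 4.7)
The plan is to construct the solution via a Picard type iteration in the characteristic coordinates built in Section \ref{5.1}, using the integro-differential formulations derived in Sections \ref{5.2}--\ref{5.4} as the iteration map. Concretely, I take as the five iterative variables the Riemann invariants $\alpha,\beta$ on $T_\varepsilon$, the radial function $r$ (with $t$ recovered afterwards from \eqref{eq5.56}--\eqref{eq5.57}), and the two shock speeds $\overset{1}{V},\overset{2}{V}$. Given the $n$th iterate, the $(n{+}1)$st is produced by: plugging into \eqref{eq5.55} to get $\partial^2 r/\partial u\partial v$ and then applying \eqref{eq5.51}, \eqref{eq5.53} to get $\partial r/\partial u$, $\partial r/\partial v$; integrating the transport equations \eqref{eq5.58}, \eqref{eq5.59} along characteristics issuing from the two shock curves (where $\overset{1}{\alpha}_+,\overset{2}{\beta}_+$ are obtained from the implicit functions $\overset{1}{H},\overset{2}{H}$ in \eqref{eq5.27}, \eqref{eq5.28}); and finally recomputing $\overset{i}{V}$ via \eqref{eq5.62}. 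This is the iterative regime referenced in Sections \ref{6.1}--\ref{6.2}.

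The iteration is performed in a ball of a function space that enforces the correct behavior at the corner. Since \eqref{eq5.64} prescribes $\alpha(0,0)=\beta(0,0)=\beta_0$ and the corner derivatives $\alpha'_0,\beta'_0$, while $A_0=0$ forces $\partial\alpha/\partial v(0,0)=\partial\beta/\partial u(0,0)=0$, I would take a closed set of $C^2$ functions whose values and first derivatives deviate from the explicit linear ansatz at $(0,0)$ by a constant multiple of $\varepsilon$, measured in a weighted sup norm on $T_\varepsilon$ adapted to the two scales $u$ and $v-u$. The reference linear profile for $r$ is read off from the Jacobian \eqref{eq5.43}, and for $\overset{i}{V}$ the reference value is $\overset{i}{V}_0$. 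Smoothness of all the coefficient functions ($\eta,c_{out},c_{in},A,\mu,\nu,\overset{i}{F},\overset{i}{M}_j,\overset{i}{\Gamma}$) in their arguments, together with the smoothness of the state-ahead data $\overset{i}{\alpha}{}^*,\overset{i}{\beta}{}^*$, ensures the iteration map is well-defined.

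Two estimates, to be proved in Sections \ref{6.3}--\ref{6.4}, then close the argument. First, a boundedness estimate shows the iteration maps the chosen ball into itself; here the key structural inputs are the negativity of $\Gamma_0$ (so that $\overset{1}{\Gamma},\overset{2}{\Gamma}$ stay away from zero by continuity), the strict inequality $0<a<1$ (so that $\overset{2}{\Gamma}/\overset{1}{\Gamma}$ is controlled), and the non-degeneracy $\det M=1-a^3>0$ in \eqref{eq5.48} (so that $\alpha'_0,\beta'_0$ are unambiguously fixed and the corner derivatives of successive iterates match). Second, a contraction estimate in a slightly weaker norm, obtained by differencing the integral formulas and absorbing quadratic terms into the smallness of $\varepsilon$, yields a unique fixed point. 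The limit satisfies \eqref{eq5.56}--\eqref{eq5.65} by passage to the limit in the integral formulations, and $C^2$ regularity is preserved because the estimates include uniform bounds on $\partial^2 r/\partial u\partial v$ via \eqref{eq5.55} and on the second derivatives of $\alpha,\beta$ via further differentiation of the transport equations. The Jacobian statement follows from \eqref{eq5.43}, where $\det(\partial(t,r)/\partial(u,v))(0,0)=-2/(\eta_0\Gamma_0)\neq 0$, together with the $C^1$ control on $t,r$ that keeps the determinant bounded away from zero on all of $T_\varepsilon$ for $\varepsilon$ small.

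The main obstacle, I expect, is controlling the \emph{coupled} boundary dynamics on the two shocks. The boundary ODE \eqref{eq5.31} for $\overset{1}{\alpha}_+$ is driven by $d\overset{1}{\beta}_+/du$, and $\overset{1}{\beta}_+(u)$ is recovered by integrating \eqref{eq5.59} along the incoming characteristic from the right shock, where its value is $\overset{2}{\beta}_+$ and thus governed by \eqref{eq5.34}; symmetrically in reverse. This bi-directional coupling between the two shock curves, mediated by long characteristic traverses across $T_\varepsilon$, is what makes the choice of norm delicate: the weights must be compatible with the factor $a$ appearing in the corner relations and with the ratios $\overset{2}{\Gamma}/\overset{1}{\Gamma}$ in \eqref{eq5.51}, \eqref{eq5.53}, while still giving a genuine contraction after finitely many iterations. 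Maintaining the determinism conditions \eqref{eq5.66}--\eqref{eq5.67} throughout the iteration is automatic from continuity once $\varepsilon$ is small enough, since they hold strictly at the corner by \eqref{eq4.4}, \eqref{eq4.6}.
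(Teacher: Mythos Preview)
Your overall strategy---a Picard iteration in characteristic coordinates using the integral formulations of Sections~\ref{5.2}--\ref{5.4}---is essentially the paper's. The implementation differs in two ways worth noting. First, the paper does not carry $\overset{i}{V}$ as separate iteration variables; they are recomputed at each step from \eqref{eq5.62}. Second, and more significantly, the paper's five iterates are $(\overset{1}{\tilde{\alpha}}_{+},\overset{2}{\tilde{\beta}}_{+},r,\tilde{\alpha},\tilde{\beta})$ with $\tilde{\alpha}(u,v)=\alpha(u,v)-\overset{1}{\alpha}_{+}(u)$ and $\tilde{\beta}(u,v)=\beta(u,v)-\overset{2}{\beta}_{+}(v)$. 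This splitting of the Riemann invariants into boundary trace plus interior correction is what makes the estimates close cleanly; the norms are then plain sup-norms of \emph{second} derivatives, with the corner data \eqref{eq5.64} built into the definition of the closed balls rather than encoded in weights.

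The genuine gap is in your contraction claim. You assert that contraction comes from ``absorbing quadratic terms into the smallness of $\varepsilon$'', but that is only true for the interior pieces $\tilde{\alpha},\tilde{\beta}$. For $r$, the paper's iteration formula gives (cf.~\eqref{eq6.134}) $\partial^2 r_{m+1}/\partial u^2 = a\,\overset{1}{\gamma}_m(u)\,\partial^2 r_m/\partial u^2(au,u)+\cdots$, so the contraction factor for $\|\Delta_m r\|$ is the fixed geometric constant $a<1$, not $O(\varepsilon)$. For the boundary traces the mechanism is \eqref{eq5.37}: combining \eqref{eq6.163} and \eqref{eq6.164} produces the factor $a^2\overset{1}{F}_0\overset{2}{F}_0=a^4<1$. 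The paper packages all of this as a $3\times 3$ matrix inequality \eqref{eq6.287} whose entries are a mix of $C\varepsilon$, a fixed $k<1$, and $O(1)$ constants $C$, and establishes convergence by showing the spectral radius is below $1$ for $\varepsilon$ small. Your obstacle paragraph correctly senses that the factor $a$ and the ratios $\overset{2}{\Gamma}/\overset{1}{\Gamma}$ are the crux, but without the boundary/interior splitting and the explicit recognition that the $r$- and trace-contractions are driven by $a$ rather than $\varepsilon$, a pure ``$\varepsilon$-small'' argument will not close.
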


\begin{theorem}[Uniqueness]\label{uniqueness}
Suppose that $(\alpha_1,\beta_1,t_1,r_1)$ and $(\alpha_2,\beta_2,t_2,r_2)$ are solutions to the shock interaction problem presented in Section 5.5, with the same future developments of the data. When $\varepsilon$ is sufficiently small, the two sets of solutions are consistent.
\end{theorem}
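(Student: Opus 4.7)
The plan is to reduce the uniqueness statement to a contraction-type estimate for the differences of the two solutions, mirroring the convergence argument of the iteration in Section \ref{6.4}. Both solutions already satisfy the same initial data at the interaction point by \eqref{eq5.64}--\eqref{eq5.65}, the same jump conditions \eqref{eq5.63}, the same boundary conditions \eqref{eq5.60}--\eqref{eq5.61}, and the same integral representations for $\partial r/\partial u$ and $\partial r/\partial v$ derived in Section \ref{5.4}. Therefore any discrepancy between them must come from a nontrivial solution of the homogeneous linearized problem around either of them, and the small parameter $\varepsilon$ will force it to vanish.

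The first step is to exhibit a common asymptotic expansion that any $C^2$ solution must satisfy near the origin. Using \eqref{eq5.64} together with \eqref{eq5.16} and the boundary conditions, I would show
\begin{equation*}
\alpha(u,v)=\beta_0+\alpha_0'u+\mathcal{O}_A(\varepsilon^2),\qquad
\beta(u,v)=\beta_0+\beta_0'v+\mathcal{O}_A(\varepsilon^2),
\end{equation*}
with companion expansions for $t$ and $r$ whose linear coefficients are prescribed by the Jacobian \eqref{eq5.43}. This expansion coincides for both solutions, so the remainders $\tilde\alpha_j,\tilde\beta_j,\tilde t_j,\tilde r_j$ lie in the same function space that was used in Section \ref{6.3} to house iterates. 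In particular their derivatives $\partial r_j/\partial u,\partial r_j/\partial v$ are $\mathcal{O}_A(\varepsilon)$ away from the values dictated by \eqref{eq5.43}.

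Next I would form the difference variables — the five quantities controlled by the iteration, namely $\delta(\partial r/\partial u),\delta(\partial r/\partial v),\delta\alpha,\delta\beta$ and the difference of the two shock speeds $\delta\overset{i}{V}$ — and equip them with the same norm used for the iteration. Subtracting the integral representations \eqref{eq5.51} and \eqref{eq5.53}, the transport equations \eqref{eq5.58}--\eqref{eq5.59} integrated along characteristics, and the boundary relations \eqref{eq5.27}--\eqref{eq5.28} for the two solutions, every nonlinear expression is handled by its mean-value Lipschitz estimate in terms of the already bounded quantities $A$. Each resulting integral is taken over a subset of $T_\varepsilon$ of measure $\mathcal{O}(\varepsilon)$, which yields a schematic inequality
\begin{equation*}
\|\delta X\|\le C(A)\,\varepsilon\,\|\delta X\|.
\end{equation*}
Choosing $\varepsilon$ so small that $C(A)\varepsilon<1$ forces $\delta X=0$, whence $\alpha_1=\alpha_2,\beta_1=\beta_2,t_1=t_2,r_1=r_2$.

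The main obstacle is the boundary contribution at the two shock curves. Because $\overset{1}{\Gamma}_0=\overset{2}{\Gamma}_0=\Gamma_0<0$ is of order one, the coefficients $\overset{2}{\Gamma}(u)/\overset{1}{\Gamma}(u)$ and $\overset{2}{\Gamma}(v)/\overset{1}{\Gamma}(av)$ appearing in \eqref{eq5.51} and \eqref{eq5.53} do not contribute an extra factor of $\varepsilon$, so the differences $\delta\overset{i}{\Gamma}$ must be bounded directly through the shock-speed formula \eqref{eq5.62} and the implicit relations \eqref{eq5.27}--\eqref{eq5.28}, using the smoothness of the state-ahead data ${\overset{i}{\alpha}}^\ast,{\overset{i}{\beta}}^\ast$ together with \eqref{eq5.44}--\eqref{eq5.47}. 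Closing the loop requires that every such boundary difference be absorbed by an interior difference that carries an explicit $\varepsilon$, so the bookkeeping of constants — precisely the delicate step in Section \ref{6.4} — has to be repeated here with the differences replacing successive iterates.
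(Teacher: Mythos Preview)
Your overall strategy—subtracting the two solutions, deriving difference versions of the integral identities \eqref{eq5.51}, \eqref{eq5.53}, the transport equations \eqref{eq5.58}--\eqref{eq5.59}, and the jump relations \eqref{eq5.27}--\eqref{eq5.28}, then closing a contraction—is exactly what the paper does in Section~\ref{6.7}. But the schematic conclusion $\|\delta X\|\le C(A)\varepsilon\|\delta X\|$ is not attainable with the variables you list, and the last paragraph does not actually close the gap you correctly identify.

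The problem is the first term on the right of \eqref{eq5.51}. Taking differences,
\[
\delta\frac{\partial r}{\partial u}(u,v)=\overset{1}{\gamma}(u)\,\delta\frac{\partial r}{\partial u}(au,u)+\frac{\partial r_1}{\partial u}(au,u)\,\delta\overset{1}{\gamma}(u)+\delta(\text{integrals}),
\]
and since $\overset{1}{\gamma}(u)=\overset{2}{\Gamma}(u)/\overset{1}{\Gamma}(u)=1+\mathcal{O}(u)$, the coefficient in front of $\delta(\partial r/\partial u)(au,u)$ is $1+\mathcal{O}(\varepsilon)$, not $\mathcal{O}(\varepsilon)$. Bounding $\delta\overset{i}{\Gamma}$ through \eqref{eq5.62} only treats the second term; the first one prevents any sup-norm estimate on $\delta(\partial r/\partial u)$ from closing. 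The same happens at the jump conditions: differentiating \eqref{eq5.27} once, the coefficient of $\delta(d\overset{1}{\beta}_+/du)$ is $\overset{1}{F}_0+\mathcal{O}(\varepsilon)$, an order-one number, so absorbing it into ``an interior difference that carries an explicit $\varepsilon$'' is not possible.

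The paper resolves this by working one derivative higher, using the second-derivative norms $\|\cdot\|_0,\|\cdot\|_1,\|\cdot\|_2$ of \eqref{eq6.38}--\eqref{eq6.40} on the five-tuple $(\overset{1}{\tilde\alpha}_+,\overset{2}{\tilde\beta}_+,r,\tilde\alpha,\tilde\beta)$. Differentiating \eqref{eq5.51} once more in $u$, the boundary term becomes $a\overset{1}{\gamma}(u)\,\partial^2 r/\partial u^2(au,u)$, and the chain rule supplies the missing contraction factor $a<1$; see \eqref{eq6.134} and \eqref{eq6.264}. Likewise, at the level of $\overset{1}{\tilde\alpha}_+'',\overset{2}{\tilde\beta}_+''$ the coupling constants are $\overset{1}{F}_0$ and $a^2\overset{2}{F}_0$ with $a\overset{1}{F}_0\overset{2}{F}_0=a^3<1$; see \eqref{eq6.283}--\eqref{eq6.286}. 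The outcome is not a scalar bound but a $3\times 3$ matrix inequality \eqref{eq6.333} whose entries in one row are $C$ and $k<1$ (not $\mathcal{O}(\varepsilon)$); it is the spectral radius of this nonnegative matrix that is strictly less than $1$, and iterating $x\le M^n x$ with $M^n\to 0$ forces $x=0$. Your proposal needs both the passage to second derivatives and this matrix mechanism; the single-line estimate $\|\delta X\|\le C\varepsilon\|\delta X\|$ is too strong to be true here.
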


\begin{theorem}[Higher Regularity]\label{higher regularity}
For sufficiently small $\varepsilon$, the established solution of the interaction problem, comprising $\alpha$, $\beta$, $t$, $r$, is infinitely differentiable.
\end{theorem}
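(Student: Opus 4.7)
The plan is to proceed by induction on the order $k \ge 2$, with the base case $k=2$ supplied by Theorem \ref{existence}. Suppose inductively that $\alpha, \beta, t, r \in C^k(T_\varepsilon)$ and that all their mixed derivatives of order $k$ satisfy uniform bounds on $T_\varepsilon$. I would then differentiate the system \eqref{eq5.56}--\eqref{eq5.59} a total of $k-1$ times. Because these equations are semilinear, with right-hand sides smooth in $(\alpha,\beta,r)$ and first-order in $(t,\alpha,\beta)$, the resulting equations for the mixed partial derivatives of order $k$ of $(\alpha,\beta,t,r)$ have the same structural form as \eqref{eq5.56}--\eqref{eq5.59}: the new unknowns appear linearly, the coefficients are smooth functions of $(\alpha,\beta,r)$ (hence at least $C^1$ by the induction hypothesis), and the inhomogeneous terms are polynomials in derivatives of lower order, which are already $C^1$.

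Second, I would determine initial-boundary data for the new unknowns. Differentiating the boundary relations \eqref{eq5.60}--\eqref{eq5.61} and the jump conditions \eqref{eq5.63} a sufficient number of times along the shock curves $u=v$ and $u=av$ expresses all $k$-th order derivatives along the shocks in terms of lower-order quantities and of each other. At the interaction point $(u,v)=(0,0)$ one then matches the values coming from the two shock curves, exactly as was done for $\alpha'_0, \beta'_0$ in Section \ref{5.3}. This produces at each order a $2\times 2$ linear system analogous to \eqref{eq5.48}; its determinant is of the form $1 - a^{\,j}$ with $j\ge 3$ depending on the order of differentiation, which is strictly positive, so the compatibility data at the interaction point are uniquely determined.

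With the $k$-th order system and its initial-boundary data in hand, I would replay the iteration of Sections \ref{6.3}--\ref{6.4} in a function space of $C^1$ maps on $T_\varepsilon$ that assume the prescribed values at $(0,0)$ and along the two shock curves. The iteration is now driven by a linear Volterra-type problem along incoming and outgoing characteristics (cf.\ \eqref{eq5.51} and \eqref{eq5.53}), with coefficients controlled by the already-established $C^k$ bounds on $(\alpha,\beta,t,r)$. A contraction estimate analogous to the one used in Section \ref{6.4} then follows by exploiting the smallness of $\varepsilon$, and provides $C^1$ regularity of the $k$-th derivatives, i.e.\ $C^{k+1}$ regularity of the solution. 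Iterating over all $k$ yields $\alpha,\beta,t,r\in C^\infty(T_\varepsilon)$.

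The main obstacle I anticipate is the bookkeeping around the corner $(0,0)$: at each order, repeatedly differentiating \eqref{eq5.63} couples all lower-order derivatives through the smooth function $J$, and matching these with the derivatives obtained by integration along the two characteristic families produces a proliferating set of compatibility conditions that must be solved consistently. Showing that the $2\times 2$ systems arising at each order are non-degenerate and that the iteration in the chosen function space closes uniformly in the order of differentiation (so that no single scheme breaks down) is where the genuine work lies; the structural form of the system, together with the determinism condition ensuring $0<a<1$, is precisely what makes this possible.
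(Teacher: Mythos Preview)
Your inductive strategy is sound and would succeed, but the paper takes a more direct and economical route. Rather than re-running the iteration of Sections~\ref{6.3}--\ref{6.4} at each order $k$, the paper works with the solution itself (already known to be $C^2$) and derives a closed system of a~priori inequalities for the pure $m$th-order derivatives $\partial_u^m r$, $\partial_v^m r$, $\partial_u^m \alpha$, $\partial_v^m \beta$, $d^m\overset{1}{\alpha}_+/du^m$, $d^m\overset{2}{\beta}_+/dv^m$. A key simplification you do not mention is that all \emph{mixed} $m$th-order derivatives of $\alpha,\beta,t,r$ are automatically bounded by the induction hypothesis, because $\partial_u\partial_v r$, $\partial_u\partial_v t$, $\partial_u\partial_v\alpha$, $\partial_u\partial_v\beta$ each equal smooth expressions in derivatives of order at most one (cf.\ \eqref{eq5.55} and \eqref{eq5.16}); this reduces the problem to the pure $u$- and $v$-derivatives. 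For those, the paper differentiates the integral representations \eqref{eq5.51}, \eqref{eq5.53} to obtain inequalities of the form $\sup|\partial_u^m r| \le a^{m-1}(1+\overline{C}\varepsilon)\sup|\partial_u^m r| + \varepsilon\cdot(\text{other top-order terms}) + C$, and similarly for the jump conditions via \eqref{eq5.27}--\eqref{eq5.28}, where the coupling constant is $a^m\overset{1}{F}_0\overset{2}{F}_0 = a^{m+2} < 1$. These close algebraically for $\varepsilon$ small, with the crucial bookkeeping that the smallness conditions involve only constants $\overline{C}$ independent of $m$ (while the lower-order remainders carry $m$-dependent constants $C$), so a single choice of $\varepsilon$ works for all $m$.

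Your iteration-based approach buys a slightly cleaner logical story (existence of the higher derivatives comes for free from convergence of smooth iterates) at the cost of reconstructing the whole Picard machinery at each order; the paper's a~priori estimate avoids this overhead and never needs to determine the ``initial data'' for the $m$th derivatives at $(0,0)$---the corner compatibility conditions you worry about simply do not arise in that argument. Both approaches ultimately hinge on the same structural facts ($0<a<1$ from determinism, and $\overset{1}{F}_0\overset{2}{F}_0=a^2$), so neither is wrong; the paper's is just shorter.
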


In this section, we will provide the proof of Theorem 1 and Theorem 2. The proof of Theorem 1 is based on the iterations presented in the following subsections. We have developed a constructive approach that uses an iterative scheme to obtain a solution to the problem.

We present the details of the iterative scheme in the following subsections, starting with the initial sequence and ending with the convergence of the iterations. Along this way, we prove that the solution obtained satisfies all the equations and boundary conditions presented in the previous section.

\subsection{Setup of the iteration scheme}\label{6.1}

In this work, the shock interaction problem is solved using an iterative method that is based on the functions $\alpha(u,v)$, $\beta(u,v)$ and $r(u,v)$. The iteration consists of the following steps:

(1) We start by initializing the sequence:

\begin{equation}\label{eq6.1}
r_0(u,v)=r_0+\frac{1}{\Gamma_0}u+v,\ \ \ \alpha_0(u,v)=\beta_0+\alpha^{'}_0 u,\ \ \ \beta_0(u,v)=\beta_0+\beta_0^{'}v,
\end{equation}
\begin{equation}\label{eq6.2}
\overset{1}{\tilde{\alpha}}_{+0}(u)=\alpha_0(u,u)=\beta_0+\alpha_0^{'}u,\ \ \ \overset{2}{\tilde{\beta}}_{+0}(v)=\beta_0(av,v)=\beta_0+\beta_0^{'}v,
\end{equation}
\begin{equation}\label{eq6.3}
\tilde{\alpha}_0(u,v)=\alpha_0(u,v)-\overset{1}{\tilde{\alpha}}_{+0}(u)=0,\ \ \ \tilde{\beta}_0(u,v)=\beta_0(u,v)-\overset{2}{\tilde{\beta}}_{+0}(v)=0.
\end{equation}

(2) Obviously the initialization sequence satisfies 

\begin{equation*}
r_0(u,v)\in C^2(T_\varepsilon),\ \ \ \tilde{\alpha}_0(u,v)\in C^2(T_\varepsilon),\ \ \ \tilde{\beta}_0(u,v)\in C^2(T_\varepsilon),\ \ \ \overset{1}{\tilde{\alpha}}_{+0}(u)\in C^2[0,a\varepsilon],\ \ \ \overset{2}{\tilde{\beta}}_{+0}(v)\in C^2[0,\varepsilon].
\end{equation*}

(3) We set

\begin{equation}\label{eq6.4}
t_m(u,v)=\int_0^u(\phi_m+\psi_m)(u^{'},u^{'})du^{'}+\int_u^v\psi_m(u,v^{'})dv^{'},
\end{equation}
where

\begin{equation}\label{eq6.5}
\phi_m(u,v)=\frac{1}{c_{in,m}(u,v)}\frac{\partial r_m}{\partial u}(u,v),\ \ \ \psi_m(u,v)=\frac{1}{c_{out,m}(u,v)}\frac{\partial r_m}{\partial v}(u,v).
\end{equation}
Here we use the notation

\begin{equation}\label{eq6.6}
c_{in,m}(u,v)=c_{in}(\alpha_m(u,v),\beta_m(u,v)),\ \ \ c_{out,m}(u,v)=c_{out}(\alpha_m(u,v),\beta_m(u,v)).
\end{equation}

(4) Define the Riemann invariants in the state ahead along the shock curves as

\begin{equation}\label{eq6.7}
\overset{1}{\alpha}_{-m}(u)={\overset{1}{\alpha}}^*(\overset{1}{t}_{+m}(u),\overset{1}{r}_{+m}(u)),\ \ \ \overset{1}{\beta}_{-m}(u)={\overset{1}{\beta}}^*(\overset{1}{t}_{+m}(u),\overset{1}{r}_{+m}(u)),
\end{equation}
\begin{equation}\label{eq6.8}
\overset{2}{\alpha}_{-m}(v)={\overset{2}{\alpha}}^*(\overset{2}{t}_{+m}(v),\overset{2}{r}_{+m}(v)),\ \ \ \overset{2}{\beta}_{-m}(v)={\overset{2}{\beta}}^*(\overset{2}{t}_{+m}(v),\overset{2}{r}_{+m}(v)),
\end{equation}
where

\begin{equation}\label{eq6.9}
\overset{1}{t}_{+m}(u)=t_m(u,u),\ \ \ \overset{1}{r}_{+m}(u)=r_m(u,u),\ \ \ \overset{2}{t}_{+m}(v)=t_m(av,v),\ \ \ \overset{2}{r}_{+m}(v)=r_m(av,v).
\end{equation}
Here ${\overset{i}{\alpha}}^*$ and ${\overset{i}{\beta}}^*$ ($i=1,2$) are given by the solution in the state ahead.

(5) Define the Riemann invariants in the state behind along the shock curves as

\begin{equation}\label{eq6.10}
\overset{1}{\alpha}_{+m}(u)=\alpha_m(u,u),\ \ \ \overset{1}{\beta}_{+m}(u)=\beta_m(u,u),
\end{equation}
\begin{equation}\label{eq6.11}
\overset{2}{\alpha}_{+m}(v)=\alpha_m(av,v),\ \ \ \overset{2}{\beta}_{+m}(v)=\beta_m(av,v).
\end{equation}

(6) We define the speeds of the two shock waves as

\begin{equation}\label{eq6.12}
\overset{i}{V}_m=\frac{[\overset{i}{\rho}_m\overset{i}{w}_m]}{[\overset{i}{\rho}_m]},\ \ \ i=1,2,
\end{equation}
where

\begin{equation}\label{eq6.13}
\overset{1}{\rho}_{\pm m}(u)=\rho(\overset{1}{\alpha}_{\pm m}(u),\overset{1}{\beta}_{\pm m}(u)),\ \ \ \overset{1}{w}_{\pm m}(u)=w(\overset{1}{\alpha}_{\pm m}(u),\overset{1}{\beta}_{\pm m}(u)),
\end{equation}
\begin{equation}\label{eq6.14}
\overset{2}{\rho}_{\pm m}(v)=\rho(\overset{2}{\alpha}_{\pm m}(v),\overset{2}{\beta}_{\pm m}(v)),\ \ \ \overset{2}{w}_{\pm m}(v)=w(\overset{2}{\alpha}_{\pm m}(v),\overset{2}{\beta}_{\pm m}(v)).
\end{equation}
We also define $\rho_m(u,v)=\rho(\alpha_m(u,v),\beta_m(u,v))$, $w_m(u,v)=w(\alpha_m(u,v),\beta_m(u,v))$. And here we use the notation $[f_m]=f_{+m}-f_{-m}$. In addition, we define

\begin{equation}\label{eq6.15}
\overset{i}{c_{in}}_{+,m}(u)=c_{in}(\overset{i}{\alpha}_{+m}(u),\overset{i}{\beta}_{+m}(u)),\ \ \ \overset{i}{c_{out}}_{+,m}(u)=c_{out}(\overset{i}{\alpha}_{+m}(u),\overset{i}{\beta}_{+m}(u)).
\end{equation}
In this manner, we have already provided all the quantities of the $m$th iteration step.

(7) Next, we define $r$ in the $(m+1)$th iteration step:

\begin{equation}\label{eq6.16}
\begin{aligned}
r_{m+1}(u,v)=&r_0+\frac{1}{\Gamma_0}u+v+\int_0^u \overset{1}{\Phi}_m(u^{'})du^{'}+\int_0^v\overset{2}{\Phi}_m(v^{'})dv^{'}+\int_0^u\int_u^{v^{'}} M_m(u^{'},v^{'})du^{'}dv^{'}\\
&+\int_0^v\int_{av^{'}}^u M_m(u^{'},v^{'})du^{'}dv^{'},
\end{aligned}
\end{equation}
where

\begin{equation}\label{eq6.17}
\overset{1}{\Phi}_m(u)=\int_0^u \overset{1}{\Lambda}_m(u^{'})du^{'}+\frac{1}{\overset{1}{\Gamma}_m(u)}\int_{au}^u M_m(u^{'},u)du^{'},
\end{equation}
\begin{equation}\label{eq6.18}
\overset{2}{\Phi}_m(v)=\int_0^v \overset{2}{\Lambda}_m(v^{'})dv^{'}+\overset{2}{\Gamma}_m(v)\int_{av}^v M_m(av,v^{'})dv^{'},
\end{equation}
where

\begin{equation}\label{eq6.19}
\overset{1}{\Lambda}_m(u)=\frac{d\overset{1}{\gamma}_m}{du}(u)\frac{\partial r_m}{\partial u}(au,u)+\overset{1}{\gamma}_m(u)a\frac{\partial^2 r_m}{\partial u^2}(au,u)+\overset{1}{\gamma}_m(u)M_m(au,u),
\end{equation}
\begin{equation}\label{eq6.20}
\overset{2}{\Lambda}_m(v)=\frac{d\overset{2}{\gamma}_m}{dv}(v)\frac{\partial r_m}{\partial v}(av,av)+\overset{2}{\gamma}_m(v)a\frac{\partial^2 r_m}{\partial v^2}(av,av)+\overset{2}{\gamma}_m(v)aM_m(av,v),
\end{equation}
\begin{equation}\label{eq6.21}
M_m(u,v)=\mu_m(u,v)\frac{\partial r_m}{\partial u}(u,v)+\nu_m(u,v)\frac{\partial r_m}{\partial v}(u,v),
\end{equation}
where we define

\begin{equation}\label{eq6.22}
\overset{1}{\gamma}_m(u):=\frac{\overset{2}{\Gamma}_m(u)}{\overset{1}{\Gamma}_m(u)},\ \ \ \overset{2}{\gamma}_m(v):=\frac{\overset{2}{\Gamma}_m(v)}{\overset{1}{\Gamma}_m(av)},
\end{equation}
\begin{equation}\label{eq6.23}
\mu_m:=\frac{1}{c_{out,m}-c_{in,m}}\frac{c_{out,m}}{c_{in,m}}\frac{\partial c_{in,m}}{\partial v},\ \ \ \nu_m:=-\frac{1}{c_{out,m}-c_{in,m}}\frac{c_{in,m}}{c_{out,m}}\frac{\partial c_{out,m}}{\partial u}.
\end{equation}
The definition of $\overset{i}{\Gamma}_m$ here is based on equation \eqref{eq5.9}. We add the subscript $m$ to all the quantities on the right side of the two equations in \eqref{eq5.9}, and take it as the definition of $\overset{i}{\Gamma}_m$. And we also use the notation 

\begin{equation}\label{eq6.24}
c_{out,m}(u,v)=c_{out}(\alpha_m(u,v),\beta_m(u,v)),\ \ \ c_{in,m}(u,v)=c_{in}(\alpha_m(u,v),\beta_m(u,v)).
\end{equation}

(8) Now let's give the definitions of $\alpha_{m+1}(u,v)$, $\beta_{m+1}(u,v)$ , $\overset{1}{\tilde{\alpha}}_{+,m+1}(u)$ and $\overset{2}{\tilde{\beta}}_{+,m+1}(v)$ :

\begin{equation}\label{eq6.25}
\overset{1}{\tilde{\alpha}}_{+,m+1}(u)=\overset{1}{H}(\overset{1}{\beta}_{+m}(u),\overset{1}{\alpha}_{-m}(u),\overset{1}{\beta}_{-m}(u)),
\end{equation}
\begin{equation}\label{eq6.26}
\overset{2}{\tilde{\beta}}_{+,m+1}(v)=\overset{2}{H}(\overset{2}{\alpha}_{+m}(v),\overset{2}{\alpha}_{-m}(v),\overset{2}{\beta}_{-m}(v)),
\end{equation}
\begin{equation}\label{eq6.27}
\alpha_{m+1}(u,v)=\overset{1}{\tilde{\alpha}}_{+,m+1}(u)+\int_u^v\left(A_m\frac{\partial t_m}{\partial v}\right)(u,v^{'})dv^{'},
\end{equation}
\begin{equation}\label{eq6.28}
\beta_{m+1}(u,v)=\overset{2}{\tilde{\beta}}_{+,m+1}(v)+\int_{av}^u\left(A_m\frac{\partial t_m}{\partial u}\right)(u^{'},v)du^{'},
\end{equation}
where

\begin{equation}\label{eq6.29}
A_m(u,v)=A(\alpha_m(u,v),\beta_m(u,v))=-\frac{2\eta_m(u,v) w_m(u,v)}{r_m(u,v)},
\end{equation}
where $\eta_m(u,v)=\eta(\rho_m(u,v))=\eta(\rho(\alpha_m(u,v),\beta_m(u,v))$, $w_m(u,v)=w(\alpha_m(u,v),\beta_m(u,v))$.

(9) Finally, we set

\begin{equation}\label{eq6.30}
\tilde{\alpha}_{m+1}(u,v)=\alpha_{m+1}(u,v)-\overset{1}{\tilde{\alpha}}_{+,m+1}(u),
\end{equation}
\begin{equation}\label{eq6.31}
\tilde{\beta}_{m+1}(u,v)=\beta_{m+1}(u,v)-\overset{2}{\tilde{\beta}}_{+,m+1}(v).
\end{equation}

\subsection{Remarks on the iteration scheme}\label{6.2}

\begin{itemize} 

\item[$\bullet$] Each 5-tuple $(\tilde{\alpha}_m,\tilde{\beta}_m,\overset{1}{\alpha}_{+m},\overset{2}{\beta}_{+m},r_m)$ corresponds uniquely to a 3-tuple $(\alpha_m,\beta_m,r_m)$. We need to show that each iteration sequence is in its corresponding function space as the iteration steps progress. Furthermore, the sequence $((\tilde{\alpha}_m,\tilde{\beta}_m,\overset{1}{\alpha}_{+m},\overset{2}{\beta}_{+m},r_m);m=0,1,2,\cdot\cdot\cdot)$ converges.

\item[$\bullet$] Based on the iteration in step (8), we know that $\alpha_{m+1}$ and $\beta_{m+1}$ satisfy the following relation:

\begin{equation}\label{eq6.32}
\frac{\partial\alpha_{m+1}}{\partial v}(u,v)=\left(A_m\frac{\partial t_m}{\partial v}\right)(u,v),\ \ \ \frac{\partial\beta_{m+1}}{\partial u}(u,v)=\left(A_m\frac{\partial t_m}{\partial u}\right)(u,v).
\end{equation}

\item[$\bullet$] In Section 6.3, we prove that

\begin{equation}\label{eq6.33}
\frac{\partial^2 r_{m+1}}{\partial u^2}(u,v)=\overset{1}{\gamma}_m(u)a\frac{\partial^2 r_m}{\partial u^2}(au,u)+\cdot\cdot\cdot,
\end{equation}
where the terms not shown later represent mixed second derivative terms, lower-order terms, or mixed third derivative terms that appear in integrals. Because $0<a<1$ and $\overset{1}{\gamma}_m(0)=1$, the coefficient of $\frac{\partial^2 r_m}{\partial u^2}(au,u)$ is strictly less than 1. This is used to prove that the iteration converges.

\item[$\bullet$] In step (4), it is necessary to ensure that the shock curves

\begin{flalign*}
&u\longmapsto(\overset{1}{t}_{+m}(u),\overset{1}{r}_{+m}(u)),\ \ \ u\in [0,a\varepsilon],\\
&v\longmapsto(\overset{2}{t}_{+m}(v),\overset{2}{r}_{+m}(v)),\ \ \ v\in [0,\varepsilon],
\end{flalign*}
in the space-time corresponding to the iteration of step $m$ are in the future development of their corresponding initial values. Otherwise, the values of the Riemann invariants ${\overset{i}{\alpha}}^*(\overset{i}{t}_{+m},\overset{i}{r}_{+m})$ and ${\overset{i}{\beta}}^*(\overset{i}{t}_{+m},\overset{i}{r}_{+m})$, where $i=1,2$, in the state ahead are meaningless.

\end{itemize}

\subsection{Inductive step }\label{6.3}

We choose closed balls in function spaces as follows:

\begin{flalign}
&\overset{1}{B}_B=\{f\in C^2[0,a\varepsilon]:f(0)=\beta_0,f^{'}(0)=\alpha^{'}_0,\Vert f\Vert_1\le B_1\},\label{eq6.34}\\
&\overset{2}{B}_B=\{f\in C^2[0,\varepsilon]:f(0)=\beta_0,f^{'}(0)=\beta^{'}_0,\Vert f\Vert_2\le B_2\},\label{eq6.35}\\
&\overset{1}{B}_N=\{f\in C^2(T_\varepsilon):f(0,0)=r_0,\frac{\partial f}{\partial u}(0,0)=\frac{1}{\Gamma_0},\frac{\partial f}{\partial v}(0,0)=1,\Vert f\Vert_0\le N_1\},\label{eq6.36}\\
&\overset{2}{B}_N=\{f\in C^2(T_\varepsilon):f(0,0)=0,\frac{\partial f}{\partial u}(0,0)=0,\frac{\partial f}{\partial v}(0,0)=0,\Vert f\Vert_0\le N_2\},\label{eq6.37}
\end{flalign}
where the norms are defined as

\begin{flalign}
&\Vert f\Vert_0:=max\{\mathop{sup}\limits_{T_\varepsilon}\left|\frac{\partial^2 f}{\partial u^2}\right|,\mathop{sup}\limits_{T_\varepsilon}\left|\frac{\partial^2 f}{\partial u\partial v}\right|,\mathop{sup}\limits_{T_\varepsilon}\left|\frac{\partial^2 f}{\partial v^2}\right|\},\label{eq6.38}\\
&\Vert f\Vert_1:=\mathop{sup}\limits_{[0,a\varepsilon]}\left|f^{''}\right|,\label{eq6.39}\\
&\Vert f\Vert_2:=\mathop{sup}\limits_{[0,\varepsilon]}\left|f^{''}\right|.\label{eq6.40}
\end{flalign}

\begin{proposition}\label{prop1}
When $\varepsilon$ is small enough, we can make the sequence

\begin{equation*}
(\overset{1}{\tilde{\alpha}}_{+m},\overset{2}{\tilde{\beta}}_{+m},r_m,\tilde{\alpha}_m,\tilde{\beta}_m);m=0,1,2,\cdot\cdot\cdot
\end{equation*}
contained in the function space $\overset{1}{B}_B\times \overset{2}{B}_B\times\overset{1}{B}_N\times\overset{2}{B}_N\times \overset{2}{B}_N$ by choosing appropriate constants $B_1$, $B_2$, $N_1$ and $N_2$.
\end{proposition}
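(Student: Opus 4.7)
The plan is to prove Proposition~\ref{prop1} by induction on $m$. The base case $m=0$ is tautological: the iterates in \eqref{eq6.1}--\eqref{eq6.3} are affine or identically zero, so they satisfy the prescribed values and first-order derivatives at $(0,0)$ by construction and their second derivatives vanish identically, so they sit in the stated balls for any positive choice of $B_1, B_2, N_1, N_2$.

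For the inductive step, assume the $m$th 5-tuple lies in the product ball. I would first verify the pointwise algebraic conditions at $(u,v)=(0,0)$ for each component at step $m+1$. For $r_{m+1}$, direct evaluation and differentiation of \eqref{eq6.16} at the origin read off $r_0$, $1/\Gamma_0$, and $1$, because every integral vanishes and the affine leading piece supplies the rest. For $\overset{1}{\tilde{\alpha}}_{+,m+1}$ and $\overset{2}{\tilde{\beta}}_{+,m+1}$, the required boundary value $\beta_0$ and derivatives $\alpha_0', \beta_0'$ are exactly the content of \eqref{eq5.22} and \eqref{eq5.40}--\eqref{eq5.41}; indeed these constants were defined precisely so the identities hold. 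For $\alpha_{m+1}, \beta_{m+1}$ the pointwise relations pass through the shock-curve traces together with $A_m(0,0)=0$ (because $w_0 = 0$), which also forces the desired vanishing to first order of $\tilde{\alpha}_{m+1}, \tilde{\beta}_{m+1}$ at the origin.

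Next I would address the norm bounds. For the shock traces $\overset{1}{\tilde{\alpha}}_{+,m+1}, \overset{2}{\tilde{\beta}}_{+,m+1}$, I differentiate \eqref{eq6.25}--\eqref{eq6.26} twice, use smoothness of $\overset{i}{H}$ and of ${\overset{i}{\alpha}}^*, {\overset{i}{\beta}}^*$, and control the second derivatives of $\overset{i}{\beta}_{+m}, \overset{i}{\alpha}_{-m}, \overset{i}{\beta}_{-m}$ by the inductive bounds (through $r_m,\tilde{\alpha}_m,\tilde{\beta}_m$ via \eqref{eq6.7}--\eqref{eq6.11}); then $B_1, B_2$ can be fixed large enough to absorb the resulting data-dependent constants. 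For $\tilde{\alpha}_{m+1}$ and $\tilde{\beta}_{m+1}$, equation \eqref{eq6.32} shows $\partial_v \alpha_{m+1}$ and $\partial_u \beta_{m+1}$ factor through $A_m \cdot \partial t_m$; using $A_m(0,0)=0$ and smoothness pulls an $\varepsilon$-factor out of the relevant second derivatives, so a fixed $N_2$ works once $\varepsilon$ is small.

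The main obstacle is the bound $\Vert r_{m+1}\Vert_0 \le N_1$, specifically the control of $\partial_u^2 r_{m+1}$. The mixed derivative $\partial_u \partial_v r_{m+1} = M_m$ is bounded via \eqref{eq6.21} by $(\Vert \mu_m\Vert_\infty + \Vert \nu_m\Vert_\infty)$ times the first derivatives of $r_m$, and those first derivatives are $O(1)$ plus an $O(\varepsilon N_1)$ deviation from the affine data; an analogous bound handles $\partial_v^2 r_{m+1}$. For $\partial_u^2 r_{m+1}$, the structural identity \eqref{eq6.33} exhibits the recursive principal term $\overset{1}{\gamma}_m(u)\,a\,\partial_u^2 r_m(au,u)$, with remaining contributions being mixed-derivative terms or integrals of length $\le \varepsilon$. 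Since $\overset{1}{\gamma}_m(0)=1$ by \eqref{eq5.11} and $0<a<1$, for $\varepsilon$ small one has $|a\,\overset{1}{\gamma}_m(u)|\le a + C\varepsilon<1$, giving a recursion of the form
\begin{equation*}
\Vert \partial_u^2 r_{m+1}\Vert_\infty \le (a + C\varepsilon)\Vert \partial_u^2 r_m\Vert_\infty + C',
\end{equation*}
where $C'$ depends on the data and on the previously chosen $B_1, B_2, N_2$ but not on $N_1$. Choosing $N_1 \ge 2C'/(1-a)$ and then shrinking $\varepsilon$ so that $(a + C\varepsilon)N_1 + C' \le N_1$ closes the induction. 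The delicate bookkeeping is the order of choosing constants: first $N_2$ (depending only on data and an $O(\varepsilon)$ factor), then $B_1, B_2$ (depending on $N_2$ and data), then $N_1$ (exploiting the contraction factor), with $\varepsilon$ shrunk last so that all threshold inequalities hold simultaneously.
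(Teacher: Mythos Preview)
Your overall architecture---induction on $m$, verifying the pointwise conditions at the origin, and exploiting the contraction factor $a<1$ in \eqref{eq6.33} to close the $r$-bound---matches the paper. The check of the initial conditions via \eqref{eq5.22}, \eqref{eq5.40}--\eqref{eq5.41} and $A_m(0,0)=0$ is correct. However, two steps do not go through as written.

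\textbf{The coupling of $B_1$ and $B_2$.} You propose to control $\overset{1}{\beta}_{+m}{}''$ ``through $r_m,\tilde\alpha_m,\tilde\beta_m$ via \eqref{eq6.7}--\eqref{eq6.11}'' and then choose $B_1,B_2$ ``large enough.'' But $\overset{1}{\beta}_{+m}(u)=\beta_m(u,u)=\tilde\beta_m(u,u)+\overset{2}{\tilde\beta}_{+m}(u)$, so $|\overset{1}{\beta}_{+m}{}''|\le 4N_2+B_2$; the $B_2$ term is unavoidable. Differentiating \eqref{eq6.25} twice therefore gives $\|\overset{1}{\tilde\alpha}_{+,m+1}\|_1\le |\overset{1}{F}_0|\,B_2+C(N_1,N_2)$, and symmetrically $\|\overset{2}{\tilde\beta}_{+,m+1}\|_2\le a^2|\overset{2}{F}_0|\,B_1+C(N_1,N_2)$ (the factor $a^2$ appears because $\overset{2}{\alpha}_{+m}(v)=\alpha_m(av,v)$). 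These inequalities are coupled, and ``take both large'' does not close without a contraction. The paper substitutes one into the other and uses $|\overset{1}{F}_0\overset{2}{F}_0|\,a^2=a^4<1$ from \eqref{eq5.37}; alternatively one can argue each $|\overset{i}{F}_0|<1$ directly from the determinism condition, but you invoke neither mechanism. Relatedly, your order of choice (first $B_1,B_2$, then $N_1$) is backwards: the constants $C(N_1,N_2)$ above involve $\overset{i}{\alpha}_{-m}{}'',\overset{i}{\beta}_{-m}{}''$, which by \eqref{eq6.7}--\eqref{eq6.8} carry second derivatives of $t_m,r_m$ and hence depend on $N_1$. The paper fixes $N_1$ and $N_2$ first (their defining inequalities involve only parameter-independent constants), then $B_1,B_2$.

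\textbf{The bound on $\partial_v^2 r_{m+1}$.} You say ``an analogous bound handles $\partial_v^2 r_{m+1}$,'' treating it like the mixed derivative. But \eqref{eq6.20} shows $\overset{2}{\Lambda}_m(v)$ contains $a\,\overset{2}{\gamma}_m(v)\,\partial_v^2 r_m(av,av)$, so $\partial_v^2 r_{m+1}$ also requires the $a$-contraction argument, not merely the $O(1)+O(\varepsilon N_1)$ estimate valid for $\partial_u\partial_v r_{m+1}=M_m$. This is the mirror image of your $\partial_u^2$ argument and is easy to fix, but as stated it is a gap.
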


\begin{proof} We use mathematical induction to prove. As defined in \eqref{eq6.1}-\eqref{eq6.3}, obviously, the initial sequence satisfies

\begin{equation*}
\overset{1}{\tilde{\alpha}}_{+0}(u)\in \overset{1}{B}_B,\ \ \ \overset{2}{\tilde{\beta}}_{+0}(v)\in \overset{2}{B}_B,\ \ \ r_0(u,v)\in \overset{1}{B}_N,\ \ \ \tilde{\alpha}_0(u,v)\in \overset{2}{B}_N,\ \ \ \tilde{\beta}_0(u,v)\in \overset{2}{B}_N.
\end{equation*}
Let us assume

\begin{equation*}
(\overset{1}{\tilde{\alpha}}_{+m},\overset{2}{\tilde{\beta}}_{+m},r_m,\tilde{\alpha}_m,\tilde{\beta}_m)\in\overset{1}{B}_B\times \overset{2}{B}_B\times\overset{1}{B}_N\times\overset{2}{B}_N\times \overset{2}{B}_N.
\end{equation*}
We just need to verify that

\begin{equation*}
(\overset{1}{\tilde{\alpha}}_{+,m+1},\overset{2}{\tilde{\beta}}_{+,m+1},r_{m+1},\tilde{\alpha}_{m+1},\tilde{\beta}_{m+1})\in\overset{1}{B}_B\times \overset{2}{B}_B\times\overset{1}{B}_N\times\overset{2}{B}_N\times \overset{2}{B}_N.
\end{equation*}

We first estimate $\alpha_m(u,v)$ and $\beta_m(u,v)$ as well as their derivatives. Noticing that $u\le v$ within $T_\varepsilon$, according to the induction hypothesis, we know that $\tilde{\alpha}_m(u,v)$ and $\tilde{\beta}_m(u,v)$ satisfy

\begin{flalign}
&\tilde{\alpha}_m(u,v),\tilde{\beta}_m(u,v)=\mathcal{O}_{N_2}(v^2),\label{eq6.41}\\
&\frac{\partial\tilde{\alpha}_m}{\partial u}(u,v),\frac{\partial\tilde{\alpha}_m}{\partial v}(u,v),\frac{\partial\tilde{\beta}_m}{\partial u}(u,v),\frac{\partial\tilde{\beta}_m}{\partial v}(u,v)=\mathcal{O}_{N_2}(v),\label{eq6.42}\\
&\frac{\partial^2\tilde{\alpha}_m}{\partial u^2}(u,v),\frac{\partial^2\tilde{\alpha}_m}{\partial u\partial v}(u,v),\frac{\partial^2\tilde{\alpha}_m}{\partial v^2}(u,v),\frac{\partial^2\tilde{\beta}_m}{\partial u^2}(u,v),\frac{\partial^2\tilde{\beta}_m}{\partial u\partial v}(u,v),\frac{\partial^2\tilde{\beta}_m}{\partial v^2}=\mathcal{O}_{N_2}(1).\label{eq6.43}
\end{flalign}
Similarly, $\overset{1}{\tilde{\alpha}}_{+m}(u)$ and $\overset{2}{\tilde{\beta}}_{+m}(v)$ satisfy
\begin{flalign}
&\overset{1}{\tilde{\alpha}}_{+m}(u)=\beta_0+\alpha_0^{'}u+\mathcal{O}_{B_1}(v^2),\ \ \ \overset{2}{\tilde{\beta}}_{+m}(v)=\beta_0+\beta_0^{'}v+\mathcal{O}_{B_2}(v^2),\label{eq6.44}\\
&\overset{1}{\tilde{\alpha}}_{+m}^{'}(u)=\alpha_0^{'}+\mathcal{O}_{B_1}(v),\ \ \ \overset{2}{\tilde{\beta}}_{+m}^{'}(v)=\beta_0^{'}+\mathcal{O}_{B_2}(v),\label{eq6.45}\\
&\overset{1}{\tilde{\alpha}}_{+m}^{''}(u)=\mathcal{O}_{B_1}(1),\ \ \ \overset{2}{\tilde{\beta}}_{+m}^{''}(v)=\mathcal{O}_{B_2}(1).\label{eq6.46}
\end{flalign}
Then as defined in \eqref{eq6.30},\eqref{eq6.31} and \eqref{eq6.3},

\begin{equation}\label{eq6.47}
\alpha_m(u,v)=\tilde{\alpha}_m(u,v)+\overset{1}{\tilde{\alpha}}_{+m}(u),
\end{equation}
\begin{equation}\label{eq6.48}
\beta_m(u,v)=\tilde{\beta}_m(u,v)+\overset{2}{\tilde{\beta}}_{+m}(v),
\end{equation}
we can get

\begin{flalign}
&\alpha_m(u,v)=\beta_0+\alpha_0^{'}u+\mathcal{O}_{B_1}(v^2)+\mathcal{O}_{N_2}(v^2)=\beta_0+\mathcal{O}(v),\label{eq6.49}\\
&\beta_m(u,v)=\beta_0+\beta_0^{'}v+\mathcal{O}_{B_2}(v^2)+\mathcal{O}_{N_2}(v^2)=\beta_0+\mathcal{O}(v),\label{eq6.50}
\end{flalign}
\begin{flalign}
&\frac{\partial\alpha_m}{\partial u}(u,v)=\alpha_0^{'}+\mathcal{O}_{B_1}(v)+\mathcal{O}_{N_2}(v),\label{eq6.51}\\
&\frac{\partial\alpha_m}{\partial v}(u,v)=\mathcal{O}_{N_2}(v),\label{eq6.52}\\
&\frac{\partial\beta_m}{\partial u}(u,v)=\mathcal{O}_{N_2}(v),\label{eq6.53}\\
&\frac{\partial\beta_m}{\partial v}(u,v)=\beta_0^{'}+\mathcal{O}_{B_2}(v)+\mathcal{O}_{N_2}(v),\label{eq6.54}\\
&\frac{\partial^2\alpha_m}{\partial u^2}(u,v)=\mathcal{O}_{N_2}(1)+\mathcal{O}_{B_1}(1),\label{eq6.55}\\
&\frac{\partial^2\alpha_m}{\partial u\partial v}(u,v),\frac{\partial^2\alpha_m}{\partial v^2}(u,v)=\mathcal{O}_{N_2}(1),\label{eq6.56}\\
&\frac{\partial^2\beta_m}{\partial v^2}(u,v)=\mathcal{O}_{N_2}(1)+\mathcal{O}_{B_2}(1),\label{eq6.57}\\
&\frac{\partial^2\beta_m}{\partial u^2}(u,v),\frac{\partial^2\beta_m}{\partial u\partial v}(u,v)=\mathcal{O}_{N_2}(1).\label{eq6.58}
\end{flalign}
We can use these to estimate $\overset{i}{\alpha}_{+m}$ and $\overset{i}{\beta}_{+m}$ $(i=1,2)$ as well as their derivatives. As defined in step (5) of the iteration, we can get

\begin{flalign}
&\overset{1}{\alpha}_{+m}(u)=\beta_0+\alpha_0^{'}u+\mathcal{O}_{B_1}(v^2)+\mathcal{O}_{N_2}(v^2),\label{eq6.59}\\
&\overset{1}{\alpha}_{+m}^{'}(u)=\alpha_0^{'}+\mathcal{O}_{B_1}(v)+\mathcal{O}_{N_2}(v),\label{eq6.60}\\
&\overset{1}{\alpha}_{+m}^{''}(u)=\mathcal{O}_{B_1}(1)+\mathcal{O}_{N_2}(1),\label{eq6.61}\\
&\overset{2}{\alpha}_{+m}(v)=\beta_0+a\alpha_0^{'}v+\mathcal{O}_{B_1}(v^2)+\mathcal{O}_{N_2}(v^2),\label{eq6.62}\\
&\overset{2}{\alpha}_{+m}^{'}(v)=a\alpha_0^{'}+\mathcal{O}_{B_1}(v)+\mathcal{O}_{N_2}(v),\label{eq6.63}\\
&\overset{2}{\alpha}_{+m}^{''}(v)=\mathcal{O}_{B_1}(1)+\mathcal{O}_{N_2}(1).\label{eq6.64}
\end{flalign}
\begin{flalign}
&\overset{1}{\beta}_{+m}(u)=\beta_0+\beta_0^{'}u+\mathcal{O}_{B_2}(v^2)+\mathcal{O}_{N_2}(v^2),\label{eq6.65}\\
&\overset{1}{\beta}_{+m}^{'}(u)=\beta_0^{'}+\mathcal{O}_{B_2}(v)+\mathcal{O}_{N_2}(v),\label{eq6.66}\\
&\overset{1}{\beta}_{+m}^{''}(u)=\mathcal{O}_{B_2}(1)+\mathcal{O}_{N_2}(1),\label{eq6.67}\\
&\overset{2}{\beta}_{+m}(v)=\beta_0+\beta_0^{'}v+\mathcal{O}_{B_2}(v^2)+\mathcal{O}_{N_2}(v^2),\label{eq6.68}\\
&\overset{2}{\beta}_{+m}^{'}(v)=\beta_0^{'}+\mathcal{O}_{B_2}(v)+\mathcal{O}_{N_2}(v),\label{eq6.69}\\
&\overset{2}{\beta}_{+m}^{''}(v)=\mathcal{O}_{B_2}(1)+\mathcal{O}_{N_2}(1).\label{eq6.70}
\end{flalign}
Then we can estimate $c_{in,m}(u,v)$ and $c_{out,m}(u,v)$ as well as their derivatives. By definition in \eqref{eq6.24},

\begin{equation*}
c_{in,m}(u,v)=c_{in}(\alpha_m(u,v),\beta_m(u,v)),\ \ \ c_{out,m}(u,v)=c_{out}(\alpha_m(u,v),\beta_m(u,v)).
\end{equation*}
Noticing that if $f(u)=\mathcal{O}_A(u^n)$, then as long as $\varepsilon$ is small enough, we have $f(u)=\mathcal{O}(u^{n-1})$, which means $\left|f(u)\right|\le Cu^{n-1}$, where $C$ is a constant independent of both $A$ and $u$. Using Taylor expansion, we can easily get

\begin{flalign}
&c_{in,m}(u,v)=-\eta_0+\mathcal{O}(v),\label{eq6.71}\\
&c_{out,m}(u,v)=\eta_0+\mathcal{O}(v),\label{eq6.72}\\
&\frac{\partial c_{in,m}}{\partial u}(u,v),\frac{\partial c_{in,m}}{\partial v}(u,v),\frac{\partial c_{out,m}}{\partial u}(u,v),\frac{\partial c_{out,m}}{\partial v}(u,v)=\mathcal{O}(1),\label{eq6.73}\\
&\frac{\partial^2 c_{in,m}}{\partial u^2}(u,v),\frac{\partial^2 c_{out,m}}{\partial u^2}(u,v)=\mathcal{O}_{N_2}(1)+\mathcal{O}_{B_1}(1),\label{eq6.74}\\
&\frac{\partial^2 c_{in,m}}{\partial u\partial v}(u,v),\frac{\partial^2 c_{out,m}}{\partial u\partial v}(u,v)=\mathcal{O}_{N_2}(1),\label{eq6.75}\\
&\frac{\partial^2 c_{in,m}}{\partial v^2}(u,v),\frac{\partial^2 c_{out,m}}{\partial v^2}(u,v)=\mathcal{O}_{N_2}(1)+\mathcal{O}_{B_2}(1).\label{eq6.76}
\end{flalign}
If we set $g_m(u,v)=\frac{1}{c_{in,m}(u,v)}$ and $h_m(u,v)=\frac{1}{c_{out,m}(u,v)}$, using the above conclusions, we have

\begin{flalign}
&g_m(u,v)=-\frac{1}{\eta_0}+\mathcal{O}(v),\label{eq6.77}\\
&h_m(u,v)=\frac{1}{\eta_0}+\mathcal{O}(v),\label{eq6.78}\\
&\frac{\partial g_m}{\partial u}(u,v),\frac{\partial g_m}{\partial v}(u,v),\frac{\partial h_m}{\partial u}(u,v),\frac{\partial h_m}{\partial v}(u,v)=\mathcal{O}(1),\label{eq6.79}\\
&\frac{\partial^2 g_m}{\partial u^2}(u,v),\frac{\partial^2 h_m}{\partial u^2}(u,v)=\mathcal{O}_{N_2}(1)+\mathcal{O}_{B_1}(1),\label{eq6.80}\\
&\frac{\partial^2 g_m}{\partial u\partial v}(u,v),\frac{\partial^2 h_m}{\partial u\partial v}(u,v)=\mathcal{O}_{N_2}(1),\label{eq6.81}\\
&\frac{\partial^2 g_m}{\partial v^2}(u,v),\frac{\partial^2 h_m}{\partial v^2}(u,v)=\mathcal{O}_{N_2}(1)+\mathcal{O}_{B_2}(1).\label{eq6.82}
\end{flalign}

According to the induction hypothesis for $r_m(u,v)$, we have

\begin{flalign}
&r_m(u,v)=r_0+\frac{u}{\Gamma_0}+v+\mathcal{O}_{N_1}(v^2),\label{eq6.83}\\
&\frac{\partial r_m}{\partial u}(u,v)=\frac{1}{\Gamma_0}+\mathcal{O}_{N_1}(v),\label{eq6.84}\\
&\frac{\partial r_m}{\partial v}(u,v)=1+\mathcal{O}_{N_1}(v),\label{eq6.85}\\
&\frac{\partial^2 r_m}{\partial u^2}(u,v),\frac{\partial^2 r_m}{\partial u\partial v}(u,v),\frac{\partial^2 r_m}{\partial v^2}(u,v)=\mathcal{O}_{N_1}(1).\label{eq6.86}
\end{flalign}
Next, we will obtain the properties of $t_m(u,v)$, for which we first estimate $\phi_m(u,v)$ and $\psi_m(u,v)$. As defined in \eqref{eq6.5},

\begin{flalign}
&\phi_m(u,v)=g_m(u,v)\frac{\partial r_m}{\partial u}(u,v)=-\frac{1}{\eta_0\Gamma_0}+\mathcal{O}_{N_1}(v),\label{eq6.87}\\
&\psi_m(u,v)=h_m(u,v)\frac{\partial r_m}{\partial v}(u,v)=\frac{1}{\eta_0}+\mathcal{O}_{N_1}(v),\label{eq6.88}
\end{flalign}
\begin{flalign}
&\frac{\partial\phi_m}{\partial u}(u,v)=\frac{\partial g_m}{\partial u}(u,v)\frac{\partial r_m}{\partial u}(u,v)+g_m(u,v)\frac{\partial^2 r_m}{\partial u^2}(u,v)=\mathcal{O}_{N_1}(1),\label{eq6.89}\\
&\frac{\partial\phi_m}{\partial v}(u,v)=\frac{\partial g_m}{\partial v}(u,v)\frac{\partial r_m}{\partial u}(u,v)+g_m(u,v)\frac{\partial^2 r_m}{\partial u\partial v}(u,v)=\mathcal{O}_{N_1}(1),\label{eq6.90}
\end{flalign}
\begin{flalign}
&\frac{\partial\psi_m}{\partial u}(u,v)=\frac{\partial h_m}{\partial u}(u,v)\frac{\partial r_m}{\partial v}(u,v)+h_m(u,v)\frac{\partial^2 r_m}{\partial u\partial v}(u,v)=\mathcal{O}_{N_1}(1),\label{eq6.91}\\
&\frac{\partial\psi_m}{\partial v}(u,v)=\frac{\partial h_m}{\partial v}(u,v)\frac{\partial r_m}{\partial v}(u,v)+h_m(u,v)\frac{\partial^2 r_m}{\partial v^2}(u,v)=\mathcal{O}_{N_1}(1).\label{eq6.92}
\end{flalign}

Then by the definition of $t_m(u,v)$ in \eqref{eq6.4}, we have

\begin{flalign}
&t_m(u,v)=\int_0^u\left(\phi_m+\psi_m\right)(u^{'},u^{'})du^{'}+\int_u^v\psi_m(u,v^{'})dv^{'}=\frac{1}{\eta_0}(v-\frac{u}{\Gamma_0})+\mathcal{O}_{N_1}(v^2),\label{eq6.93}\\
&\frac{\partial t_m}{\partial u}(u,v)=\phi_m(u,u)+\int_u^v\frac{\partial\psi_m}{\partial u}(u,v^{'})dv^{'}=-\frac{1}{\eta_0\Gamma_0}+\mathcal{O}_{N_1}(v),\label{eq6.94}\\
&\frac{\partial t_m}{\partial v}(u,v)=\psi_m(u,v)=\frac{1}{\eta_0}+\mathcal{O}_{N_1}(v),\label{eq6.95}\\
&\frac{\partial^2 t_m}{\partial u^2}(u,v)=\frac{\partial\phi_m}{\partial u}(u,u)+\frac{\partial\phi_m}{\partial v}(u,u)+\frac{\partial}{\partial u}\left(\int_u^v\frac{\partial\psi_m}{\partial u}(u,v^{'})dv^{'}\right)=\mathcal{O}_{N_1}(1),\label{eq6.96}\\
&\frac{\partial^2 t_m}{\partial u\partial v}(u,v)=\frac{\partial\psi_m}{\partial u}(u,v)=\mathcal{O}_{N_1}(1),\label{eq6.97}\\
&\frac{\partial^2 t_m}{\partial v^2}(u,v)=\frac{\partial\psi_m}{\partial v}(u,v)=\mathcal{O}_{N_1}(1).\label{eq6.98}
\end{flalign}
For the third term on the right hand side of the first equation in \eqref{eq6.96}, we use

\begin{equation*}
\begin{aligned}
\int_u^v\frac{\partial\psi_m}{\partial u}(u,v^{'})dv^{'}=&\int_u^v\frac{\partial}{\partial u}\left(h_m(u,v^{'})\frac{\partial r_m}{\partial v}(u,v^{'})\right)dv^{'}\\
=&\int_u^v\left(\frac{\partial h_m}{\partial u}(u,v^{'})\frac{\partial r_m}{\partial v}(u,v^{'})+h_m(u,v^{'})\frac{\partial^2 r_m}{\partial u\partial v}(u,v^{'})\right)dv^{'}\\
=&\int_u^v\frac{\partial h_m}{\partial u}(u,v^{'})\frac{\partial r_m}{\partial v}(u,v^{'})dv^{'}-\int_u^v\frac{\partial h_m}{\partial v}(u,v^{'})\frac{\partial r_m}{\partial u}(u,v^{'})dv^{'}\\
&+h_m(u,v)\frac{\partial r_m}{\partial u}(u,v)-h_m(u,u)\frac{\partial r_m}{\partial u}(u,u),
\end{aligned}
\end{equation*}
where we use integration by parts for the second term in the second line. Using
this, we can get

\begin{equation*}
\begin{aligned}
\frac{\partial}{\partial u}\left(\int_u^v\frac{\partial\psi_m}{\partial u}(u,v^{'})dv^{'}\right)=&\int_u^v\left(\frac{\partial^2 h_m}{\partial u^2}(u,v^{'})\frac{\partial r_m}{\partial v}(u,v^{'})+\frac{\partial h_m}{\partial u}(u,v^{'})\frac{\partial^2 r_m}{\partial u\partial v}(u,v^{'})\right)dv^{'}\\
&-\int_u^v\left(\frac{\partial^2 h_m}{\partial u\partial v}(u,v^{'})\frac{\partial r_m}{\partial u}(u,v^{'})+\frac{\partial h_m}{\partial v}(u,v^{'})\frac{\partial^2 r_m}{\partial u^2}(u,v^{'})\right)dv^{'}\\
&-\frac{\partial h_m}{\partial u}(u,u)\frac{\partial r_m}{\partial v}(u,u)+\frac{\partial h_m}{\partial v}(u,u)\frac{\partial r_m}{\partial u}(u,u)\\
&+\frac{\partial h_m}{\partial u}(u,v)\frac{\partial r_m}{\partial u}(u,v)+h_m(u,v)\frac{\partial^2 r_m}{\partial u^2}(u,v)\\
&-\left(\frac{\partial h_m}{\partial u}(u,u)+\frac{\partial h_m}{\partial v}(u,u)\right)\frac{\partial r_m}{\partial u}(u,u)\\
&-h_m(u,u)\left(\frac{\partial^2 r_m}{\partial u^2}(u,u)+\frac{\partial^2 r_m}{\partial u\partial v}(u,u)\right)\\
=&\mathcal{O}_{N_1}(1).
\end{aligned}
\end{equation*}

Then we can estimate $\overset{i}{V}_m$ and $\overset{i}{\Gamma}_m$ $(i=1,2)$ as well as their derivatives. According to the definition in \eqref{eq6.12},

\begin{equation*}
\overset{i}{V}_m=\frac{[\overset{i}{\rho}_m\overset{i}{w}_m]}{[\overset{i}{\rho}_m]}=\frac{\overset{i}{\rho}_{+m}\overset{i}{w}_{+m}-\overset{i}{\rho}_{-m}\overset{i}{w}_{-m}}{\overset{i}{\rho}_{+m}-\overset{i}{\rho}_{-m}}.
\end{equation*}
Using the previous estimates for $\overset{i}{\alpha}_{+m}$ and $\overset{i}{\beta}_{+m}$ $(i=1,2)$, we have

\begin{flalign}
&\overset{1}{\rho}_{+m}(u),\overset{2}{\rho}_{+m}(v)=\rho_0+\mathcal{O}(v),\label{eq6.99}\\
&\overset{1}{w}_{+m}(u),\overset{2}{w}_{+m}(v)=\mathcal{O}(v),\label{eq6.100}\\
&\overset{1}{\rho}_{+m}^{'}(u),\overset{2}{\rho}_{+m}^{'}(v),\overset{1}{w}_{+m}^{'}(u),\overset{2}{w}_{+m}^{'}(v)=\mathcal{O}(1),\label{eq6.101}
\end{flalign}
which can be used to obtain the properties of $\overset{1}{V}_m(u)$ and $\overset{2}{V}_m(v)$:

\begin{flalign}
&\overset{1}{V}_m(u)=\overset{1}{V}_0+\mathcal{O}(v),\label{eq6.102}\\
&\overset{2}{V}_m(v)=\overset{2}{V}_0+\mathcal{O}(v),\label{eq6.103}\\
&\overset{1}{V}_m^{'}(u),\overset{2}{V}_m^{'}(v)=\mathcal{O}(1).\label{eq6.104}
\end{flalign}
Here, we also used the estimates of $\overset{i}{\rho}_{-m}$, $\overset{i}{w}_{-m}$ and their derivatives:

\begin{flalign}
\overset{i}{\rho}_{-m}&=\rho(\overset{i}{\alpha}_{-m},\overset{i}{\beta}_{-m})=\overset{i}{\rho}_-+\mathcal{O}(v),\label{eq6.105}\\
\overset{i}{w}_{-m}&=w(\overset{i}{\alpha}_{-m},\overset{i}{\beta}_{-m})=\overset{i}{w}_-+\mathcal{O}(v),\label{eq6.106}
\end{flalign}
\begin{equation}\label{eq6.107}
\overset{i}{\rho}_{-m}^{'},\overset{i}{w}_{-m}^{'}=\mathcal{O}(1),
\end{equation}
where we used the fact that

\begin{flalign}
\overset{i}{\alpha}_{-m}&={\overset{i}{\alpha}}^*(\overset{i}{t}_{+m},\overset{i}{r}_{+m})=\overset{i}{\alpha}_{-0}+\mathcal{O}(v),\label{eq6.108}\\
\overset{i}{\beta}_{-m}&={\overset{i}{\beta}}^*(\overset{i}{t}_{+m},\overset{i}{r}_{+m})=\overset{i}{\beta}_{-0}+\mathcal{O}(v).\label{eq6.109}
\end{flalign}
\begin{equation}\label{eq6.110}
\overset{i}{\alpha}_{-m}^{'},\overset{i}{\beta}_{-m}^{'}=\mathcal{O}(1).
\end{equation}

Next we estimate $\overset{i}{\Gamma}_m$. By definition,

\begin{flalign}
&\overset{1}{\Gamma}_m(u)=\frac{\overset{1}{c_{out}}_{+,m}(u)}{\overset{1}{c_{in}}_{+,m}(u)}\frac{\overset{1}{V}_m(u)-\overset{1}{c_{in}}_{+,m}(u)}{\overset{1}{c_{out}}_{+,m}(u)-\overset{1}{V}_m(u)},\label{eq6.111}\\
&\overset{2}{\Gamma}_m(v)=a\frac{\overset{2}{c_{out}}_{+,m}(v)}{\overset{2}{c_{in}}_{+,m}(v)}\frac{\overset{2}{V}_m(v)-\overset{2}{c_{in}}_{+,m}(v)}{\overset{2}{c_{out}}_{+,m}(v)-\overset{2}{V}_m(v)}.\label{eq6.112}
\end{flalign}
Using the previous estimates of $c_{in,m}(u,v)$ and $c_{out,m}(u,v)$, we can get

\begin{flalign}
&\overset{1}{\Gamma}_m(u)=\frac{\overset{1}{V}_0+\eta_0}{\overset{1}{V}_0-\eta_0}+\mathcal{O}(u)=\overset{1}{\Gamma_0}+\mathcal{O}(u),\label{eq6.113}\\
&\overset{2}{\Gamma}_m(v)=a\frac{\overset{2}{V}_0+\eta_0}{\overset{2}{V}_0-\eta_0}+\mathcal{O}(v)=\overset{2}{\Gamma_0}+\mathcal{O}(v),\label{eq6.114}\\
&\overset{1}{\Gamma}_m^{'}(u),\overset{2}{\Gamma}_m^{'}(v)=\mathcal{O}(1).\label{eq6.115}
\end{flalign}
So as defined in \eqref{eq6.22}, we have

\begin{flalign}
&\overset{1}{\gamma}_m(u)=\frac{\overset{2}{\Gamma}_m(u)}{\overset{1}{\Gamma}_m(u)}=1+\mathcal{O}(u),\label{eq6.116}\\
&\overset{2}{\gamma}_m(v)=\frac{\overset{2}{\Gamma}_m(v)}{\overset{1}{\Gamma}_m(av)}=1+\mathcal{O}(v),\label{eq6.117}\\
&\overset{1}{\gamma}_m^{'}(u),\overset{2}{\gamma}_m^{'}(v)=\mathcal{O}(1),\label{eq6.118}
\end{flalign}
where we used $\overset{1}{\Gamma}_0=\overset{2}{\Gamma}_0=\Gamma_0$.

Finally, we also need to obtain the properties about $M_m(u,v)$. To estimate $M_m(u,v)$ and all its first order derivatives, we first estimate $\mu_m(u,v)$ and $\nu_m(u,v)$. Using the previous estimates of $c_{in,m}(u,v)$ and $c_{out,m}(u,v)$, we have

\begin{flalign}
&\mu_m(u,v)=\frac{1}{c_{out,m}-c_{in,m}}\frac{c_{out,m}}{c_{in,m}}\frac{\partial c_{in,m}}{\partial v}=\mathcal{O}(1),\label{eq6.119}\\
&\nu_m(u,v)=-\frac{1}{c_{out,m}-c_{in,m}}\frac{c_{in,m}}{c_{out,m}}\frac{\partial c_{out,m}}{\partial u}=\mathcal{O}(1),\label{eq6.120}\\
&\frac{\partial\mu_m}{\partial u}(u,v)=\mathcal{O}_{N_2}(1),\label{eq6.121}\\
&\frac{\partial\mu_m}{\partial v}(u,v)=\mathcal{O}_{N_2}(1)+\mathcal{O}_{B_2}(1),\label{eq6.122}\\
&\frac{\partial\nu_m}{\partial u}(u,v)=\mathcal{O}_{N_2}(1)+\mathcal{O}_{B_1}(1),\label{eq6.123}\\
&\frac{\partial\nu_m}{\partial v}(u,v)=\mathcal{O}_{N_2}(1).\label{eq6.124}
\end{flalign}

Therefore, by \eqref{eq6.21}, we have

\begin{flalign}
&M_m(u,v)=\mu_m(u,v)\frac{\partial r_m}{\partial u}(u,v)+\nu_m(u,v)\frac{\partial r_m}{\partial v}(u,v)=\mathcal{O}(1),\label{eq6.125}\\
&\frac{\partial M_m}{\partial u}(u,v)=\mathcal{O}_{B_1}(1)+\mathcal{O}_{N_1}(1)+\mathcal{O}_{N_2}(1),\label{eq6.126}\\
&\frac{\partial M_m}{\partial v}(u,v)=\mathcal{O}_{B_2}(1)+\mathcal{O}_{N_1}(1)+\mathcal{O}_{N_2}(1).\label{eq6.127}
\end{flalign}

In this way, estimates of all quantities in step $m$ of the iteration are obtained. Remember that we still need to prove that the shock curve in step $m$ is located in the future development corresponding to the initial value, which verifies the fourth item in Section \ref{6.2}.

In step $m$ of the iteration, we denote the left moving characteristic from the origin as $\overset{1}{r}_{0,m}^*(u)={\overset{1}{r}}_0^*(\overset{1}{t}_{+m}(u))$, and the left moving shock curve as $\overset{1}{r}_{+m}(u)=r_m(u,u)$. Note that when $\varepsilon$ is small enough, $\overset{1}{t}_{+m}(u)$ is also small enough. At this time, ${\overset{1}{r}}_0^*(\overset{1}{t}_{+m}(u))$ is well defined. If we define the characteristic speed of the left moving characteristic starting from the origin as ${\overset{1}{c_{in}}}_0^*(t)$, that is $\frac{d{\overset{1}{r}}_0^*}{dt}(t)={\overset{1}{c_{in}}}_0^*(t)$. Because ${\overset{1}{c_{in}}}_0^*(t)$ is a smooth function of $t$, we have ${\overset{1}{c_{in}}}_0^*(t)=({\overset{1}{c_{in}}}_0^*)_0+\mathcal{O}(t)$. Therefore,

\begin{equation}\label{eq6.128}
\begin{aligned}
\frac{d{\overset{1}{r}}_{0,m}^*}{du}(u)=\frac{d{\overset{1}{r}}_0^*}{dt}(\overset{1}{t}_{+m}(u))\frac{d\overset{1}{t}_{+m}}{du}(u)=\frac{2({\overset{1}{c_{in}}}_0^*)_0}{\eta_0+\overset{1}{V}_0}+\mathcal{O}_{N_1}(u),
\end{aligned}
\end{equation}
\begin{equation}\label{eq6.129}
\begin{aligned}
\frac{d\overset{1}{r}_{+m}}{du}(u)&=\frac{\partial r_m}{\partial u}(u,u)+\frac{\partial r_m}{\partial v}(u,u)=\frac{2\overset{1}{V}_0}{\eta_0+\overset{1}{V}_0}+\mathcal{O}_{N_1}(u).
\end{aligned}
\end{equation}
Subtracting the two equations above, we can get

\begin{equation*}
\frac{d{\overset{1}{r}}_{0,m}^*}{du}(u)-\frac{d\overset{1}{r}_{+m}}{du}(u)=\frac{2}{\eta_0+\overset{1}{V}_0}\left(({\overset{1}{c_{in}}}_0^*)_0-\overset{1}{V}_0\right)+\mathcal{O}_{N_1}(u).
\end{equation*}
Therefore,

\begin{equation}\label{eq6.130}
{\overset{1}{r}}_{0,m}^*(u)-\overset{1}{r}_{+m}(u)=\frac{2}{\eta_0+\overset{1}{V}_0}\left(({\overset{1}{c_{in}}}_0^*)_0-\overset{1}{V}_0\right)u+\mathcal{O}_{N_1}(u^2).
\end{equation}
According to the determinism condition, $({\overset{1}{c_{in}}}_0^*)_0-\overset{1}{V}_0>0$. So when $\varepsilon$ is small enough, we have ${\overset{1}{r}}_{0,m}^*(u)-\overset{1}{r}_{+m}(u)\ge 0$.

In the same way, we denote the right moving characteristic from the origin as $\overset{2}{r}_{0,m}^*(v)={\overset{2}{r}}_0^*(\overset{2}{t}_{+m}(v))$, and the right moving shock curve as $\overset{2}{r}_{+m}(v)=r_m(av,v)$. When $\varepsilon$ is small enough, ${\overset{2}{r}}_0^*(\overset{2}{t}_{+m}(v))$ is well defined. If we define the characteristic speed of the right moving characteristic starting from the origin as ${\overset{2}{c_{out}}}_0^*(t)$, that is $\frac{d{\overset{2}{r}}_0^*}{dt}(t)={\overset{2}{c_{out}}}_0^*(t)$. Because ${\overset{2}{c_{out}}}_0^*(t)$ is a smooth function of $t$, we have ${\overset{2}{c_{out}}}_0^*(t)=({\overset{2}{c_{out}}}_0^*)_0+\mathcal{O}(t)$. Therefore,

\begin{equation}\label{eq6.131}
\begin{aligned}
\frac{d{\overset{2}{r}}_{0,m}^*}{dv}(v)=\frac{d{\overset{2}{r}}_0^*}{dt}(\overset{2}{t}_{+m}(v))\frac{d\overset{2}{t}_{+m}}{dv}(v)=\frac{2({\overset{2}{c_{out}}}_0^*)_0}{\eta_0+\overset{2}{V}_0}+\mathcal{O}_{N_1}(v),
\end{aligned}
\end{equation}
\begin{equation}\label{eq6.132}
\begin{aligned}
\frac{d\overset{2}{r}_{+m}}{dv}(v)=a\frac{\partial r_m}{\partial u}(av,v)+\frac{\partial r_m}{\partial v}(av,v)=\frac{2\overset{2}{V}_0}{\eta_0+\overset{2}{V}_0}+\mathcal{O}_{N_1}(v).
\end{aligned}
\end{equation}
Subtracting the two equations above, we can get

\begin{equation*}
\frac{d{\overset{2}{r}}_{0,m}^*}{dv}(v)-\frac{d\overset{2}{r}_{+m}}{dv}(v)=\frac{2}{\eta_0+\overset{2}{V}_0}\left(({\overset{2}{c_{out}}}_0^*)_0-\overset{2}{V}_0\right)+\mathcal{O}_{N_1}(v),
\end{equation*}
\begin{equation}\label{eq6.133}
{\overset{2}{r}}_{0,m}^*(v)-\overset{2}{r}_{+m}(v)=\frac{2}{\eta_0+\overset{2}{V}_0}\left(({\overset{2}{c_{out}}}_0^*)_0-\overset{2}{V}_0\right)v+\mathcal{O}_{N_1}(v^2).
\end{equation}
According to the determinism condition, $({\overset{2}{c_{out}}}_0^*)_0-\overset{2}{V}_0<0$. So when $\varepsilon$ is small enough, we have ${\overset{2}{r}}_{0,m}^*(v)-\overset{2}{r}_{+m}(v)\le 0$. Therefore, both shock curves $u\longmapsto(\overset{1}{t}_{+m}(u),\overset{1}{r}_{+m}(u))$, $u\in [0,a\varepsilon]$ and $v\longmapsto(\overset{2}{t}_{+m}(v),\overset{2}{r}_{+m}(v))$, $v\in [0,\varepsilon]$ are in the future development of the corresponding initial value.

According to the expression for $r_{m+1}(u,v)$ in \eqref{eq6.16}, by calculating all of its second order derivatives, we can get

\begin{equation}\label{eq6.134}
\begin{aligned}
\frac{\partial^2 r_{m+1}}{\partial u^2}(u,v)=&\overset{1}{\Lambda}_m(u)-\frac{\overset{1}{\Gamma}_m^{'}(u)}{(\overset{1}{\Gamma}_m(u))^2}\int_{au}^u M_m(u^{'},u)du^{'}+\int_u^v\frac{\partial M_m}{\partial u}(u,v^{'})dv^{'}-M_m(u,u)\\
&+\frac{1}{\overset{1}{\Gamma}_m(u)}\left(\int_{au}^u\frac{\partial M_m}{\partial v}(u^{'},u)du^{'}+M_m(u,u)-aM_m(au,u)\right),
\end{aligned}
\end{equation}
\begin{equation}\label{eq6.135}
\frac{\partial^2 r_{m+1}}{\partial u\partial v}(u,v)=M_m(u,v),
\end{equation}
\begin{equation}\label{eq6.136}
\begin{aligned}
\frac{\partial^2 r_{m+1}}{\partial v^2}(u,v)=&\overset{2}{\Lambda}_m(v)+\overset{2}{\Gamma}_m^{'}(v)\int_{av}^v M_m(av,v^{'})dv^{'}+\int_{av}^u\frac{\partial M_m}{\partial v}(u^{'},v)du^{'}-aM_m(av,v)\\
&+\overset{2}{\Gamma}_m(v)\left(\int_{av}^v a\frac{\partial M_m}{\partial u}(av,v^{'})dv^{'}+M_m(av,v)-aM_m(av,av)\right),
\end{aligned}
\end{equation}
where

\begin{flalign*}
&\left|\overset{1}{\Lambda}_m(u)\right|=\left|\overset{1}{\gamma}_m^{'}(u)\frac{\partial r_m}{\partial u}(au,u)+\overset{1}{\gamma}_m(u)a\frac{\partial^2 r_m}{\partial u^2}(au,u)+\overset{1}{\gamma}_m(u)M_m(au,u)\right|\le a(1+Cu)N_1+C,\\
&\left|\overset{2}{\Lambda}_m(v)\right|=\left|\overset{2}{\gamma}_m^{'}(v)\frac{\partial r_m}{\partial v}(av,av)+\overset{2}{\gamma}_m(v)a\frac{\partial^2 r_m}{\partial v^2}(av,av)+\overset{2}{\gamma}_m(v)aM_m(av,v)\right|\le a(1+Cv)N_1+C,
\end{flalign*}
where $C$ is a constant that does not depend on $N_1$.

So using the previous estimates, when $\varepsilon$ is small enough, we can get

\begin{flalign*}
\left|\frac{\partial^2 r_{m+1}}{\partial u^2}(u,v)\right|\le aN_1+C,\ \ \ \left|\frac{\partial^2 r_{m+1}}{\partial u\partial v}(u,v)\right|\le C,\ \ \ \left|\frac{\partial^2 r_{m+1}}{\partial v^2}(u,v)\right|\le aN_1+C,
\end{flalign*}
$C$ here and $C$ above may differ by a constant that does not depend on other parameters. Therefore,

\begin{equation}\label{eq6.137}
\Vert r_{m+1}\Vert_0\le aN_1+C.
\end{equation}

By choosing a large enough constant $N_1$ such that $\frac{C}{1-a}\le N_1$, we can obtain

\begin{equation}\label{eq6.138}
\Vert r_{m+1}\Vert_0\le N_1.
\end{equation}

Next we estimate $\tilde{\alpha}_{m+1}(u,v)$ and $\tilde{\beta}_{m+1}(u,v)$ as well as their derivatives. As defined in \eqref{eq6.27} and \eqref{eq6.30},

\begin{equation}\label{eq6.139}
\tilde{\alpha}_{m+1}(u,v)=\int_u^v\left(A_m\frac{\partial t_m}{\partial v}(u,v^{'})\right)dv^{'}.
\end{equation}
If we take the first derivative with respect to $u$ and $v$ respectively for both sides of the equation above, by integrating by parts, we can get

\begin{equation*}
\begin{aligned}
\frac{\partial\tilde{\alpha}_{m+1}}{\partial u}(u,v)=&\int_u^v\left(\frac{\partial A_m}{\partial u}\frac{\partial t_m}{\partial v}\right)(u,v^{'})dv^{'}-\int_u^v\left(\frac{\partial A_m}{\partial v}\frac{\partial t_m}{\partial u}\right)(u,v^{'})dv^{'}\\
&+\left(A_m\frac{\partial t_m}{\partial u}\right)(u,v)-\left(A_m\frac{\partial t_m}{\partial u}\right)(u,u)-\left(A_m\frac{\partial t_m}{\partial v}\right)(u,u),
\end{aligned}
\end{equation*}
\begin{equation*}
\frac{\partial\tilde{\alpha}_{m+1}}{\partial v}(u,v)=\left(A_m\frac{\partial t_m}{\partial v}\right)(u,v).
\end{equation*}
If we take the derivative of the above two equations with respect to $u$ and $v$, respectively, we can obtain

\begin{equation}\label{eq6.140}
\begin{aligned}
\frac{\partial^2\tilde{\alpha}_{m+1}}{\partial u^2}(u,v)=&\int_u^v\left(\frac{\partial^2 A_m}{\partial u^2}\frac{\partial t_m}{\partial v}+\frac{\partial A_m}{\partial u}\frac{\partial^2 t_m}{\partial u\partial v}\right)(u,v^{'})dv^{'}-\left(\frac{\partial A_m}{\partial u}\frac{\partial t_m}{\partial v}\right)(u,u)\\
&-\int_u^v\left(\frac{\partial^2 A_m}{\partial u\partial v}\frac{\partial t_m}{\partial u}+\frac{\partial A_m}{\partial v}\frac{\partial^2 t_m}{\partial u^2}\right)(u,v^{'})dv^{'}-\left(\frac{\partial A_m}{\partial u}\frac{\partial t_m}{\partial u}\right)(u,u)\\
&+\left(\frac{\partial A_m}{\partial u}\frac{\partial t_m}{\partial u}\right)(u,v)+\left(A_m\frac{\partial^2 t_m}{\partial u^2}\right)(u,v)-A_m\left(\frac{\partial^2 t_m}{\partial u\partial v}+\frac{\partial^2 t_m}{\partial v^2}\right)(u,u)\\
&-A_m\left(\frac{\partial^2 t_m}{\partial u^2}+\frac{\partial^2 t_m}{\partial u\partial v}\right)(u,u)-\left(\frac{\partial A_m}{\partial u}+\frac{\partial A_m}{\partial v}\right)\frac{\partial t_m}{\partial v}(u,u),
\end{aligned}
\end{equation}

\begin{equation}\label{eq6.141}
\frac{\partial^2\tilde{\alpha}_{m+1}}{\partial u\partial v}(u,v)=\left(\frac{\partial A_m}{\partial u}\frac{\partial t_m}{\partial v}\right)(u,v)+\left( A_m \frac{\partial^2 t_m}{\partial u\partial v} \right) (u,v),
\end{equation}

\begin{equation}\label{eq6.142}
\frac{\partial^2\tilde{\alpha}_{m+1}}{\partial v^2}(u,v)=\left(\frac{\partial A_m}{\partial v}\frac{\partial t_m}{\partial v}\right)(u,v)+\left(A_m\frac{\partial^2 t_m}{\partial v^2}\right)(u,v).
\end{equation}

To obtain properties of $\tilde{\alpha}_{m+1}(u,v)$, we first estimate $A_m(u,v)$. By definition, we have

\begin{equation}\label{eq6.143}
\rho_m(u,v)=\rho(\alpha_m(u,v),\beta_m(u,v)),\ \ \ w_m(u,v)=w(\alpha_m(u,v),\beta_m(u,v)).
\end{equation}
So using our previous estimates for $\alpha_m(u,v)$ and $\beta_m(u,v)$, we have

\begin{flalign}
&\rho_m(u,v)=\rho_0+\mathcal{O}(v),\label{eq6.144}\\
&w_m(u,v)=\mathcal{O}(v),\label{eq6.145}
\end{flalign}
\begin{flalign}
&\frac{\partial\rho_m}{\partial u}(u,v),\frac{\partial\rho_m}{\partial v}(u,v),\frac{\partial w_m}{\partial u}(u,v),\frac{\partial w_m}{\partial v}(u,v)=\mathcal{O}(1),\label{eq6.146}\\
&\frac{\partial^2 \rho_m}{\partial u^2}(u,v),\frac{\partial^2 w_m}{\partial u^2}(u,v)=\mathcal{O}_{B_1}(1)+\mathcal{O}_{N_2}(1),\label{eq6.147}\\
&\frac{\partial^2 \rho_m}{\partial u\partial v}(u,v),\frac{\partial^2 w_m}{\partial u\partial v}(u,v)=\mathcal{O}_{N_2}(1),\label{eq6.148}\\
&\frac{\partial^2 \rho_m}{\partial v^2}(u,v),\frac{\partial^2 w_m}{\partial v^2}(u,v)=\mathcal{O}_{B_2}(1)+\mathcal{O}_{N_2}(1).\label{eq6.149}
\end{flalign}

In addition, as defined in \eqref{eq6.29}, we have $A_m(u,v)=-\frac{2\eta_m w_m}{r_m}(u,v)$, where $\eta_m(u,v)=\eta(\rho_m(u,v))$. Using \eqref{eq6.83}-\eqref{eq6.86} and \eqref{eq6.144}-\eqref{eq6.149}, we can get

\begin{flalign}
&A_m(u,v)=\mathcal{O}(v),\label{eq6.150}\\
&\frac{\partial A_m}{\partial u}(u,v),\frac{\partial A_m}{\partial v}(u,v)=\mathcal{O}(1),\label{eq6.151}\\
&\frac{\partial^2 A_m}{\partial u^2}(u,v)=\mathcal{O}_{B_1}(1)+\mathcal{O}_{N_2}(1),\label{eq6.152}\\
&\frac{\partial^2 A_m}{\partial u\partial v}(u,v)=\mathcal{O}_{N_2}(1),\label{eq6.153}\\
&\frac{\partial^2 A_m}{\partial v^2}(u,v)=\mathcal{O}_{B_2}(1)+\mathcal{O}_{N_2}(1).\label{eq6.154}
\end{flalign}
Using these estimates, we can obtain the estimates of all second order derivatives of $\tilde{\alpha}_{m+1}(u,v)$:
\begin{equation}\label{eq6.155}
\frac{\partial^2\tilde{\alpha}_{m+1}}{\partial u^2}(u,v),\frac{\partial^2\tilde{\alpha}_{m+1}}{\partial u\partial v}(u,v),\frac{\partial^2\tilde{\alpha}_{m+1}}{\partial v^2}(u,v)=\mathcal{O}(1).
\end{equation}

Similarly, as defined in \eqref{eq6.28} and \eqref{eq6.31},

\begin{equation}\label{eq6.156}
\tilde{\beta}_{m+1}(u,v)=\int_{av}^u\left(A_m\frac{\partial t_m}{\partial u}\right)(u^{'},v)du^{'}.
\end{equation}
We can also get the first partial derivatives of $\tilde{\beta}_{m+1}(u,v)$ with respect to $u$ and $v$, respectively:

\begin{equation*}
\frac{\partial\tilde{\beta}_{m+1}}{\partial u}(u,v)=A_m\frac{\partial t_m}{\partial u}(u,v),
\end{equation*}
\begin{equation*}
\begin{aligned}
\frac{\partial\tilde{\beta}_{m+1}}{\partial v}(u,v)=&\int_{av}^u\left(\frac{\partial A_m}{\partial v}\frac{\partial t_m}{\partial u}\right)(u^{'},v)du^{'}-\int_{av}^u\left(\frac{\partial A_m}{\partial u}\frac{\partial t_m}{\partial v}\right)(u^{'},v)du^{'}\\
&+\left(A_m\frac{\partial t_m}{\partial v}\right)(u,v)-\left(A_m\frac{\partial t_m}{\partial v}\right)(av,v)-a\left(A_m\frac{\partial t_m}{\partial u}\right)(av,v).
\end{aligned}
\end{equation*}
If we take the derivative of the above two equations with respect to $u$ and $v$, respectively, we can obtain

\begin{equation}\label{eq6.157}
\frac{\partial^2\tilde{\beta}_{m+1}}{\partial u^2}(u,v)=\left(\frac{\partial A_m}{\partial u}\frac{\partial t_m}{\partial u}+A_m\frac{\partial^2 t_m}{\partial u^2}\right)(u,v),
\end{equation}
\begin{equation}\label{eq6.158}
\frac{\partial^2\tilde{\beta}_{m+1}}{\partial u\partial v}(u,v)=\left(\frac{\partial A_m}{\partial v}\frac{\partial t_m}{\partial u}+A_m\frac{\partial^2 t_m}{\partial u\partial v}\right)(u,v),
\end{equation}
\begin{equation}\label{eq6.159}
\begin{aligned}
\frac{\partial^2\tilde{\beta}_{m+1}}{\partial v^2}(u,v)=&\int_{av}^u\left(\frac{\partial^2 A_m}{\partial v^2}\frac{\partial t_m}{\partial u}+\frac{\partial A_m}{\partial v}\frac{\partial^2 t_m}{\partial u\partial v}\right)(u^{'},v)du^{'}\\
&-\int_{av}^u\left(\frac{\partial^2 A_m}{\partial u\partial v}\frac{\partial t_m}{\partial v}+\frac{\partial A_m}{\partial u}\frac{\partial^2 t_m}{\partial v^2}\right)(u^{'},v)du^{'}\\
&-\left(\frac{\partial A_m}{\partial v}\frac{\partial t_m}{\partial v}+A_m\frac{\partial^2 t_m}{\partial v^2}\right)(av,v)-2a\left(\frac{\partial A_m}{\partial v}\frac{\partial t_m}{\partial u}+A_m\frac{\partial^2 t_m}{\partial u\partial v}\right)(av,v)\\
&-a^2\left(\frac{\partial A_m}{\partial u}\frac{\partial t_m}{\partial u}+A_m\frac{\partial^2 t_m}{\partial u^2}\right)(av,v)+\left(\frac{\partial A_m}{\partial v}\frac{\partial t_m}{\partial v}+A_m\frac{\partial^2 t_m}{\partial v^2}\right)(u,v).
\end{aligned}
\end{equation}
Using the previous estimates, we can obtain the estimates of all second order derivatives of $\tilde{\beta}_{m+1}(u,v)$:

\begin{equation}\label{eq6.160}
\frac{\partial^2 \tilde{\beta}_{m+1}}{\partial u^2}(u,v),\frac{\partial^2 \tilde{\beta}_{m+1}}{\partial u\partial v}(u,v),\frac{\partial^2\tilde{\beta}_{m+1}}{\partial v^2}(u,v)=\mathcal{O}(1).
\end{equation}
So, there exists a constant $C$ such that when $\varepsilon$ is small enough, we have

\begin{equation}\label{eq6.161}
\Vert\tilde{\alpha}_{m+1}\Vert_0,\Vert \tilde{\beta}_{m+1}\Vert_0 \le C.
\end{equation}

If we choose $N_2$ large enough such that $C\le N_2$, then we can get

\begin{equation}\label{eq6.162}
\Vert\tilde{\alpha}_{m+1}\Vert_0,\Vert \tilde{\beta}_{m+1}\Vert_0 \le N_2.
\end{equation}

Finally, we only need to estimate the second order derivatives of $\overset{1}{\tilde{\alpha}}_{+,m+1}(u)$ and $\overset{2}{\tilde{\beta}}_{+,m+1}(v)$. By \eqref{eq6.25} and \eqref{eq6.26}, we have

\begin{equation}\label{eq6.163}
\begin{aligned}
\overset{1}{\tilde{\alpha}}_{+,m+1}^{''}(u)=&\overset{1}{F}(\overset{1}{\beta}_{+m}(u),\overset{1}{\alpha}_{-m}(u),\overset{1}{\beta}_{-m}(u))\overset{1}{\beta}_{+m}^{''}(u)\\
&+\overset{1}{M}_1(\overset{1}{\beta}_{+m}(u),\overset{1}{\alpha}_{-m}(u),\overset{1}{\beta}_{-m}(u))\overset{1}{\alpha}_{-m}^{''}(u)\\
&+\overset{1}{M}_2(\overset{1}{\beta}_{+m}(u),\overset{1}{\alpha}_{-m}(u),\overset{1}{\beta}_{-m}(u))\overset{1}{\beta}_{-m}^{''}(u)\\
&+\overset{1}{G}(\overset{1}{\beta}_{+m}(u),\overset{1}{\alpha}_{-m}(u),\overset{1}{\beta}_{-m}(u),\overset{1}{\beta}_{+m}^{'}(u),\overset{1}{\alpha}_{-m}^{'}(u),\overset{1}{\beta}_{-m}^{'}(u)),
\end{aligned}
\end{equation}
\begin{equation}\label{eq6.164}
\begin{aligned}
\overset{2}{\tilde{\beta}}_{+,m+1}^{''}(v)=&\overset{2}{F}(\overset{2}{\alpha}_{+m}(v),\overset{2}{\alpha}_{-m}(v),\overset{2}{\beta}_{-m}(v))\overset{2}{\alpha}_{+m}^{''}(v)\\
&+\overset{2}{M}_1(\overset{2}{\alpha}_{+m}(v),\overset{2}{\alpha}_{-m}(v),\overset{2}{\beta}_{-m}(v))
\overset{2}{\alpha}_{-m}^{''}(v)\\
&+\overset{2}{M}_2(\overset{2}{\alpha}_{+m}(v),\overset{2}{\alpha}_{-m}(v),\overset{2}{\beta}_{-m}(v))\overset{2}{\beta}_{-m}^{''}(v)\\
&+\overset{2}{G}(\overset{2}{\alpha}_{+m}(v),\overset{2}{\alpha}_{-m}(v),\overset{2}{\beta}_{-m}(v),\overset{2}{\alpha}_{+m}^{'}(v),\overset{2}{\alpha}_{-m}^{'}(v),\overset{2}{\beta}_{-m}^{'}(v)),
\end{aligned}
\end{equation}
where

\begin{flalign}
&\overset{1}{\alpha}_{-m}(u)={\overset{1}{\alpha}}^*(\overset{1}{t}_{+m}(u),\overset{1}{r}_{+m}(u))=\overset{1}{\alpha}_{-0}+\mathcal{O}(v),\label{eq6.165}\\
&\overset{2}{\alpha}_{-m}(v)={\overset{2}{\alpha}}^*(\overset{2}{t}_{+m}(v),\overset{2}{r}_{+m}(v))=\overset{2}{\alpha}_{-0}+\mathcal{O}(v),\label{eq6.166}\\
&\overset{1}{\beta}_{-m}(u)={\overset{1}{\beta}}^*(\overset{1}{t}_{+m}(u),\overset{1}{r}_{+m}(u))=\overset{1}{\beta}_{-0}+\mathcal{O}(v),\label{eq6.167}\\
&\overset{2}{\beta}_{-m}(v)={\overset{2}{\beta}}^*(\overset{2}{t}_{+m}(v),\overset{2}{r}_{+m}(v))=\overset{2}{\beta}_{-0}+\mathcal{O}(v),\label{eq6.168}\\
&\overset{1}{\alpha}_{-m}^{'}(u),\overset{1}{\beta}_{-m}^{'}(u),\overset{2}{\alpha}_{-m}^{'}(v),\overset{2}{\beta}_{-m}^{'}(v)=\mathcal{O}(1),\label{eq6.169}\\
&\overset{1}{\alpha}_{-m}^{''}(u),\overset{1}{\beta}_{-m}^{''}(u),\overset{2}{\alpha}_{-m}^{''}(v),\overset{2}{\beta}_{-m}^{''}(v)=\mathcal{O}_{N_1}(1).\label{eq6.170}
\end{flalign}

In addition, we have

\begin{flalign}
&\left|\overset{1}{\beta}_{+m}^{''}(u)\right|=\left|\frac{\partial^2\beta_m}{\partial u^2}(u,u)+2\frac{\partial^2\beta_m}{\partial u\partial v}(u,u)+\frac{\partial^2\beta_m}{\partial v^2}(u,u)\right|\le 4N_2+B_2,\label{eq6.171}\\
&\left|\overset{2}{\alpha}_{+m}^{''}(v)\right|=\left| a^2\frac{\partial^2 \alpha_m}{\partial u^2}(av,v)+2a\frac{\partial^2\alpha_m}{\partial u\partial v}(av,v)+\frac{\partial^2\alpha_m}{\partial v^2}(av,v)\right|\le (a+1)^2N_2+a^2 B_1.\label{eq6.172}
\end{flalign}
And because $\overset{i}{F}$, $\overset{i}{M}_1$, $\overset{i}{M}_2$ and $\overset{i}{G}(i=1,2)$ are smooth functions of their components, we have

\begin{flalign}
&\overset{1}{F}(\overset{1}{\beta}_{+m}(u),\overset{1}{\alpha}_{-m}(u),\overset{1}{\beta}_{-m}(u))=\overset{1}{F}_0+\mathcal{O}(v),\label{eq6.173}\\
&\overset{2}{F}(\overset{2}{\alpha}_{+m}(v),\overset{2}{\alpha}_{-m}(v),\overset{2}{\beta}_{-m}(v))=\overset{2}{F}_0+\mathcal{O}(v),\label{eq6.174}\\
&\overset{i}{M}_1,\overset{i}{M}_2=\mathcal{O}(1),\label{eq6.175}\\
&\overset{i}{G}=\mathcal{O}(1).\label{eq6.176}
\end{flalign}
Then we have $\left|\overset{1}{\tilde{\alpha}}_{+,m+1}^{''}(u)\right|\le \overset{1}{F}_0B_2+CN_1+CN_2$, $\left|\overset{2}{\tilde{\beta}}_{+,m+1}^{''}(v)\right|\le a^2\overset{2}{F}_0 B_1+CN_1+CN_2$.

Because $N_1$ and $N_2$ have already been fixed, there exists a constant $C$ such that when $\varepsilon$ is sufficiently small,

\begin{flalign}
&\Vert \overset{1}{\tilde{\alpha}}_{+,m+1}\Vert_1\le \overset{1}{F}_0B_2+C,\label{eq6.177}\\
&\Vert \overset{2}{\tilde{\beta}}_{+,m+1}\Vert_2\le a^2\overset{2}{F}_0B_1+C.\label{eq6.178}
\end{flalign}
If we set $B_1=\overset{1}{F}_0B_2+C$, then

\begin{equation}\label{eq6.179}
\Vert \overset{1}{\tilde{\alpha}}_{+,m+1}\Vert_1\le B_1
\end{equation}
is already satisfied. And now we have $\Vert \overset{2}{\tilde{\beta}}_{+,m+1}\Vert_2\le a^2\overset{1}{F}_0\overset{2}{F}_0B_2+C=a^4B_2+C$.

Finally, by taking a sufficiently large $B_2$ such that $\frac{C}{1-a^4}\le B_2$, we can obtain

\begin{equation}\label{eq6.180}
\Vert \overset{2}{\tilde{\beta}}_{+,m+1}\Vert_2\le B_2.
\end{equation}

We thus complete the proof of Proposition \ref{prop1}.

\end{proof}

\subsection{Convergence}\label{6.4}

\begin{proposition}\label{prop2}
For $\varepsilon$ sufficiently small, the sequence

\begin{equation*}
(\overset{1}{\tilde{\alpha}}_{+m},\overset{2}{\tilde{\beta}}_{+m},r_m,\tilde{\alpha}_m,\tilde{\beta}_m);m=0,1,2,\cdot\cdot\cdot
\end{equation*}
converges in $\overset{1}{B}_B\times \overset{2}{B}_B\times\overset{1}{B}_N\times\overset{2}{B}_N\times \overset{2}{B}_N$.
\end{proposition}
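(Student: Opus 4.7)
The plan is to promote Proposition \ref{prop1}'s invariance of the product ball to a contraction property on differences of consecutive iterates. Let me set
\begin{equation*}
\delta_m r := r_{m+1}-r_m, \quad \delta_m\tilde{\alpha} := \tilde{\alpha}_{m+1}-\tilde{\alpha}_m, \quad \delta_m\tilde{\beta} := \tilde{\beta}_{m+1}-\tilde{\beta}_m,
\end{equation*}
and similarly $\delta_m \overset{1}{\tilde{\alpha}}_+$, $\delta_m \overset{2}{\tilde{\beta}}_+$. Because both iterates satisfy the same prescribed value and first derivatives at the origin, each difference vanishes together with its relevant first derivatives there, so $\|\cdot\|_0$, $\|\cdot\|_1$, $\|\cdot\|_2$ are genuine norms on the difference spaces.

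Next I would rerun the computations of Proposition \ref{prop1} on differences instead of iterates. From \eqref{eq6.134}, the only leading contribution to $\partial_u^2 \delta_m r$ is $\overset{1}{\gamma}_{m-1}(u)\,a\,\partial_u^2\delta_{m-1}r(au,u)$; the remaining terms either involve mixed or lower-order derivatives of $\delta_{m-1}r$ (which acquire an $\varepsilon$ after integration) or differences of the coefficient functions $c_{in,m}$, $c_{out,m}$, $\overset{i}{\gamma}_m$, $\mu_m$, $\nu_m$, $M_m$, each of which is Lipschitz in all iteration variables with constants uniformly bounded by Proposition \ref{prop1}, hence bounded by $\varepsilon$ times the appropriate $\delta_{m-1}$-norm. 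The same structure controls $\partial_v^2\delta_m r$ and $\partial_u\partial_v\delta_m r = \delta_{m-1}M$, giving an estimate of the form
\begin{equation*}
\|\delta_m r\|_0 \le (a+K\varepsilon)\|\delta_{m-1} r\|_0 + K\varepsilon\bigl(\|\delta_{m-1}\tilde{\alpha}\|_0+\|\delta_{m-1}\tilde{\beta}\|_0+\|\delta_{m-1}\overset{1}{\tilde{\alpha}}_+\|_1+\|\delta_{m-1}\overset{2}{\tilde{\beta}}_+\|_2\bigr).
\end{equation*}
Since $\tilde{\alpha}_{m+1}$, $\tilde{\beta}_{m+1}$ are defined in \eqref{eq6.139}, \eqref{eq6.156} as integrals of quantities of size $\mathcal{O}(v)$ (through $A_m=\mathcal{O}(v)$), every term in $\delta_m\tilde{\alpha}$, $\delta_m\tilde{\beta}$ carries an explicit factor of $\varepsilon$ after integration, giving an analogous bound with $K\varepsilon$ in every slot.

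The decisive bounds are the shock-trace estimates. Differentiating \eqref{eq6.163}--\eqref{eq6.164} and using $\overset{1}{\beta}_{+m}{}''(u)=\partial_u^2\beta_m+2\partial_u\partial_v\beta_m+\partial_v^2\beta_m$ evaluated at $(u,u)$, together with the analogous identity along $u=av$, the only non-$\varepsilon$-small cross-contributions are
\begin{equation*}
\|\delta_m\overset{1}{\tilde{\alpha}}_+\|_1 \le (\overset{1}{F}_0+K\varepsilon)\|\delta_{m-1}\overset{2}{\tilde{\beta}}_+\|_2 + K\varepsilon(\cdots), \qquad \|\delta_m\overset{2}{\tilde{\beta}}_+\|_2 \le (a^2\overset{2}{F}_0+K\varepsilon)\|\delta_{m-1}\overset{1}{\tilde{\alpha}}_+\|_1 + K\varepsilon(\cdots),
\end{equation*}
the factor $a^2$ arising because $\overset{2}{\alpha}_{+m}(v)=\alpha_m(av,v)$ contributes $a^2(\overset{1}{\tilde{\alpha}}_{+m})''(av)$ to $\overset{2}{\alpha}_{+m}{}''(v)$. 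The $2\times 2$ cross-block in the limit has product $a^2\overset{1}{F}_0\overset{2}{F}_0 = a^4$ by \eqref{eq5.37}, hence eigenvalues $\pm a^2$, the same mechanism already exploited to close the $B_1$, $B_2$ bounds at the end of Proposition \ref{prop1}.

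Collecting the five scalar inequalities into a single vector inequality $X_m \le \mathcal{M}_\varepsilon X_{m-1}$ with $X_m=\bigl(\|\delta_m r\|_0,\|\delta_m\tilde{\alpha}\|_0,\|\delta_m\tilde{\beta}\|_0,\|\delta_m\overset{1}{\tilde{\alpha}}_+\|_1,\|\delta_m\overset{2}{\tilde{\beta}}_+\|_2\bigr)\in\mathbb{R}^5_{\ge 0}$, the limit matrix $\mathcal{M}_0$ has spectral radius $\max(a,a^2)=a<1$, so for $\varepsilon$ sufficiently small $\mathcal{M}_\varepsilon$ is a strict contraction in a suitably weighted $\ell^1$ norm on $\mathbb{R}^5$. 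Then $\sum_m \|X_m\|<\infty$, the sequence is Cauchy, and therefore converges in $\overset{1}{B}_B\times\overset{2}{B}_B\times\overset{1}{B}_N\times\overset{2}{B}_N\times\overset{2}{B}_N$. The main obstacle is the bookkeeping: the difference analogues of the integration-by-parts identities \eqref{eq6.140}--\eqref{eq6.142} and \eqref{eq6.159} must be carried out carefully so that exactly one non-$\varepsilon$-small coefficient survives in each row, and the state-ahead quantities $\overset{i}{\alpha}_{-m}$, $\overset{i}{\beta}_{-m}$ must be shown to differ between iterates by $\mathcal{O}(\varepsilon)\|\delta_{m-1}r\|_0$, using the smoothness of the prescribed data ${\overset{i}{\alpha}}^*$, ${\overset{i}{\beta}}^*$, so that no uncontrolled coefficient leaks into the remaining rows.
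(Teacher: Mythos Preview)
Your overall strategy---take differences, derive a linear recursive vector inequality, and show the coefficient matrix has spectral radius strictly less than $1$---is the same as the paper's. The paper groups the five norms into three ($a_m=\|\Delta_m\tilde{\alpha}\|_0+\|\Delta_m\tilde{\beta}\|_0$, $b_m=\|\Delta_m\overset{1}{\tilde{\alpha}}_+\|_1+\|\Delta_m\overset{2}{\tilde{\beta}}_+\|_2$, $c_m=\|\Delta_m r\|_0$) and analyzes a $3\times 3$ matrix, but that is cosmetic.

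There is, however, a genuine gap in your accounting. You assert that in the $\overset{1}{\tilde{\alpha}}_+$ and $\overset{2}{\tilde{\beta}}_+$ rows ``exactly one non-$\varepsilon$-small coefficient survives,'' and that the state-ahead differences contribute only $\mathcal{O}(\varepsilon)\|\delta_{m-1}r\|_0$. Neither is true at the level of second derivatives. First, $\overset{1}{\beta}_{+m}''(u)=(\partial_u^2+2\partial_u\partial_v+\partial_v^2)\beta_m|_{(u,u)}$ contains $\partial_u^2\tilde{\beta}_m$, $\partial_u\partial_v\tilde{\beta}_m$, $\partial_v^2\tilde{\beta}_m$ with coefficients of order one, so after multiplying by $\overset{1}{F}$ the $\overset{1}{\tilde{\alpha}}_+$ row carries an $O(1)$ coefficient in front of $\|\delta_{m-1}\tilde{\beta}\|_0$; symmetrically for the $\overset{2}{\tilde{\beta}}_+$ row and $\|\delta_{m-1}\tilde{\alpha}\|_0$. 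Second, $\overset{i}{\alpha}_{-m}''$, $\overset{i}{\beta}_{-m}''$ involve $\overset{i}{t}_{+m}''$ and $\overset{i}{r}_{+m}''$, whose differences are controlled only by $C\|\delta_{m-1}r\|_0$, not $C\varepsilon\|\delta_{m-1}r\|_0$ (the paper records this as \eqref{eq6.246}). So the fix you propose for this obstacle fails.

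The argument is easily repaired, but not by making those coefficients small. The point is that the rows for $r$, $\tilde{\alpha}$, $\tilde{\beta}$ genuinely have \emph{all} coefficients $O(\varepsilon)$---in particular their dependence on the $\overset{1}{\tilde{\alpha}}_+$, $\overset{2}{\tilde{\beta}}_+$ columns is $O(\varepsilon)$. Hence the limit $5\times 5$ matrix $\mathcal{M}_0$ is block \emph{lower} triangular: the upper $3\times 3$ block has eigenvalues $a,0,0$, the lower $2\times 2$ block $\bigl(\begin{smallmatrix}0&|\overset{1}{F}_0|\\a^2|\overset{2}{F}_0|&0\end{smallmatrix}\bigr)$ has eigenvalues $\pm a^2$, and the large $C$'s sit harmlessly in the lower-left block where they do not affect the spectrum. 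Your spectral-radius conclusion $\rho(\mathcal{M}_0)=a<1$ is therefore correct, but for this structural reason rather than the one you gave. The paper reaches the same endpoint via its $3\times 3$ reduction, computing the characteristic polynomial explicitly to show all three eigenvalues lie in $(-1,1)$ for small $\varepsilon$.
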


\begin{proof}
We use the notation $\Delta_m f:=f_m-f_{m-1}$. We first consider $\tilde{\alpha}_m(u,v)$ and $\tilde{\beta}_m(u,v)$. According to the properties previously obtained for $\tilde{\alpha}_m(u,v)$ and $\tilde{\beta}_m(u,v)$ as given in \eqref{eq6.41}-\eqref{eq6.43}, we have

\begin{flalign}
&\left|\Delta_m \tilde{\alpha}\right|\le Cv^2\Vert\Delta_m\tilde{\alpha}\Vert_0,\label{eq6.181}\\
&\left|\Delta_m \tilde{\beta}\right|\le Cv^2\Vert\Delta_m\tilde{\beta}\Vert_0,\label{eq6.182}\\
&\left|\Delta_m\frac{\partial\tilde{\alpha}}{\partial u}\right|,\left|\Delta_m\frac{\partial\tilde{\alpha}}{\partial v}\right|\le Cv\Vert\Delta_m\tilde{\alpha}\Vert_0,\label{eq6.183}\\
&\left|\Delta_m\frac{\partial\tilde{\beta}}{\partial u}\right|,\left|\Delta_m\frac{\partial\tilde{\beta}}{\partial v}\right|\le Cv\Vert\Delta_m\tilde{\beta}\Vert_0,\label{eq6.184}\\
&\left|\Delta_m\frac{\partial^2\tilde{\alpha}}{\partial u^2}\right|,\left|\Delta_m\frac{\partial^2\tilde{\alpha}}{\partial u\partial v}\right|,\left|\Delta_m\frac{\partial^2\tilde{\alpha}}{\partial v^2}\right|\le \Vert\Delta_m\tilde{\alpha}\Vert_0,\label{eq6.185}\\
&\left|\Delta_m\frac{\partial^2\tilde{\beta}}{\partial u^2}\right|,\left|\Delta_m\frac{\partial^2\tilde{\beta}}{\partial u\partial v}\right|,\left|\Delta_m\frac{\partial^2\tilde{\beta}}{\partial v^2}\right|\le \Vert\Delta_m\tilde{\beta}\Vert_0.\label{eq6.186}
\end{flalign}
Using the properties previously obtained for $\overset{1}{\tilde{\alpha}}_{+m}(u)$ and $\overset{2}{\tilde{\beta}}_{+m}(v)$ in \eqref{eq6.44}-\eqref{eq6.46}, we can obtain

\begin{flalign}
&\left|\Delta_m\overset{1}{\tilde{\alpha}}_+\right|\le \frac{v^2}{2}\Vert\Delta_m\overset{1}{\tilde{\alpha}}_+\Vert_1,\ \ \ \left|\Delta_m\overset{2}{\tilde{\beta}}_+\right|\le \frac{v^2}{2}\Vert\Delta_m\overset{2}{\tilde{\beta}}_+\Vert_2,\label{eq6.187}\\
&\left|\Delta_m\overset{1}{\tilde{\alpha}}_+^{'}\right|\le v\Vert\Delta_m\overset{1}{\tilde{\alpha}}_+\Vert_1,\ \ \ \left|\Delta_m\overset{2}{\tilde{\beta}}_+^{'}\right|\le v\Vert\Delta_m\overset{2}{\tilde{\beta}}_+\Vert_2,\label{eq6.188}\\
&\left|\Delta_m\overset{1}{\tilde{\alpha}}_+^{''}\right|\le \Vert\Delta_m\overset{1}{\tilde{\alpha}}_+\Vert_1,\ \ \ \left|\Delta_m\overset{2}{\tilde{\beta}}_+^{''}\right|\le \Vert\Delta_m\overset{2}{\tilde{\beta}}_+\Vert_2.\label{eq6.189}
\end{flalign}
Therefore, we have

\begin{flalign}
&\left|\Delta_m\alpha\right|\le \left|\Delta_m\tilde{\alpha}\right|+\left|\Delta_m\overset{1}{\tilde{\alpha}}_+\right|\le Cv^2\Vert\Delta_m\tilde{\alpha}\Vert_0+\frac{v^2}{2}\Vert\Delta_m\overset{1}{\tilde{\alpha}}_+\Vert_1,\label{eq6.190}\\
&\left|\Delta_m\beta\right|\le \left|\Delta_m\tilde{\beta}\right|+\left|\Delta_m\overset{2}{\tilde{\beta}}_+\right|\le Cv^2\Vert\Delta_m\tilde{\beta}\Vert_0+\frac{v^2}{2}\Vert\Delta_m\overset{2}{\tilde{\beta}}_+\Vert_2,\label{eq6.191}\\
&\left|\Delta_m\frac{\partial\alpha}{\partial u}\right|\le \left|\Delta_m\frac{\partial\tilde{\alpha}}{\partial u}\right|+\left|\Delta_m\overset{1}{\tilde{\alpha}}_+^{'}\right|\le Cv\Vert\Delta_m\tilde{\alpha}\Vert_0+v\Vert\Delta_m\overset{1}{\tilde{\alpha}}_+\Vert_1,\label{eq6.192}\\
&\left|\Delta_m\frac{\partial\alpha}{\partial v}\right|\le \left|\Delta_m\frac{\partial\tilde{\alpha}}{\partial v}\right|\le Cv\Vert\Delta_m\tilde{\alpha}\Vert_0,\label{eq6.193}\\
&\left|\Delta_m\frac{\partial\beta}{\partial u}\right|\le \left|\Delta_m\frac{\partial\tilde{\beta}}{\partial u}\right|\le Cv\Vert\Delta_m\tilde{\beta}\Vert_0,\label{eq6.194}\\
&\left|\Delta_m\frac{\partial\beta}{\partial v}\right|\le \left|\Delta_m\frac{\partial\tilde{\beta}}{\partial v}\right|+\left|\Delta_m\overset{2}{\tilde{\beta}}_+^{'}\right|\le Cv\Vert\Delta_m\tilde{\beta}\Vert_0+v\Vert\Delta_m\overset{2}{\tilde{\beta}}_+\Vert_2,\label{eq6.195}
\end{flalign}

\begin{flalign}
&\left|\Delta_m\frac{\partial^2\alpha}{\partial u^2}\right|\le\left|\Delta_m\frac{\partial^2\tilde{\alpha}}{\partial u^2}\right|+\left|\Delta_m\overset{1}{\tilde{\alpha}}_+^{''}\right|\le \Vert\Delta_m\tilde{\alpha}\Vert_0+\Vert\Delta_m\overset{1}{\tilde{\alpha}}_+\Vert_1,\label{eq6.196}\\
&\left|\Delta_m\frac{\partial^2\alpha}{\partial u\partial v}\right|,\left|\Delta_m\frac{\partial^2\alpha}{\partial v^2}\right|\le \Vert\Delta_m\tilde{\alpha}\Vert_0,\label{eq6.197}\\
&\left|\Delta_m\frac{\partial^2\beta}{\partial u^2}\right|,\left|\Delta_m\frac{\partial^2\beta}{\partial u\partial v}\right|\le \Vert\Delta_m\tilde{\beta}\Vert_0,\label{eq6.198}\\
&\left|\Delta_m\frac{\partial^2\beta}{\partial v^2}\right|\le \left|\Delta_m\frac{\partial^2 \tilde{\beta}}{\partial v^2}\right|+\left|\Delta_m\overset{2}{\tilde{\beta}}_+^{''}\right|\le \Vert\Delta_m\tilde{\beta}\Vert_0+\Vert\Delta_m\overset{2}{\tilde{\beta}}_+\Vert_2.\label{eq6.199}
\end{flalign}

Furthermore, by the definitions of $\overset{i}{\alpha}_+$ and $\overset{i}{\beta}_+$ in \eqref{eq6.10} and \eqref{eq6.11}, we naturally have

\begin{flalign}
&\left|\Delta_m\overset{1}{\alpha}_+\right|\le Cv^2\Vert\Delta_m\tilde{\alpha}\Vert_0+\frac{v^2}{2}\Vert\Delta_m\overset{1}{\tilde{\alpha}}_+\Vert_1,\label{eq6.200}\\
&\left|\Delta_m\overset{1}{\alpha}_+^{'}\right|\le Cv\Vert\Delta_m\tilde{\alpha}\Vert_0+v\Vert\Delta_m\overset{1}{\tilde{\alpha}}_+\Vert_1,\label{eq6.201}\\
&\left|\Delta_m\overset{1}{\alpha}_+^{''}\right|\le C\Vert\Delta_m\tilde{\alpha}\Vert_0+\Vert\Delta_m\overset{1}{\tilde{\alpha}}_+\Vert_1,\label{eq6.202}
\end{flalign}
\begin{flalign}
&\left|\Delta_m\overset{1}{\beta}_+\right|\le Cv^2\Vert\Delta_m\tilde{\beta}\Vert_0+\frac{v^2}{2}\Vert\Delta_m\overset{2}{\tilde{\beta}}_+\Vert_2,\label{eq6.203}\\
&\left|\Delta_m\overset{1}{\beta}_+^{'}\right|\le Cv\Vert\Delta_m\tilde{\beta}\Vert_0+v\Vert\Delta_m\overset{2}{\tilde{\beta}}_+\Vert_2,\label{eq6.204}\\
&\left|\Delta_m\overset{1}{\beta}_+^{''}\right|\le C\Vert\Delta_m\tilde{\beta}\Vert_0+\Vert\Delta_m\overset{2}{\tilde{\beta}}_+\Vert_2,\label{eq6.205}
\end{flalign}
\begin{flalign}
&\left|\Delta_m\overset{2}{\alpha}_+\right|\le Cv^2\Vert\Delta_m\tilde{\alpha}\Vert_0+\frac{v^2}{2}\Vert\Delta_m\overset{1}{\tilde{\alpha}}_+\Vert_1,\label{eq6.206}\\
&\left|\Delta_m\overset{2}{\alpha}_+^{'}\right|\le Cv\Vert\Delta_m\tilde{\alpha}\Vert_0+av\Vert\Delta_m\overset{1}{\tilde{\alpha}}_+\Vert_1,\label{eq6.207}\\
&\left|\Delta_m\overset{2}{\alpha}_+^{''}\right|\le (a+1)^2\Vert\Delta_m\tilde{\alpha}\Vert_0+a^2\Vert\Delta_m\overset{1}{\tilde{\alpha}}_+\Vert_1,\label{eq6.208}
\end{flalign}
\begin{flalign}
&\left|\Delta_m\overset{2}{\beta}_+\right|\le Cv^2\Vert\Delta_m\tilde{\beta}\Vert_0+\frac{v^2}{2}\Vert\Delta_m\overset{2}{\tilde{\beta}}_+\Vert_2,\label{eq6.209}\\
&\left|\Delta_m\overset{2}{\beta}_+^{'}\right|\le Cv\Vert\Delta_m\tilde{\beta}\Vert_0+v\Vert\Delta_m\overset{2}{\tilde{\beta}}_+\Vert_2,\label{eq6.210}\\
&\left|\Delta_m\overset{2}{\beta}_+^{''}\right|\le (a+1)^2\Vert\Delta_m\tilde{\beta}\Vert_0+\Vert\Delta_m\overset{2}{\tilde{\beta}}_+\Vert_2.\label{eq6.211}
\end{flalign}

Next we estimate $\Delta_m c_{in}(u,v)$ and $\Delta_m c_{out}(u,v)$. We recall that $c_{in}$ and $c_{out}$ are both smooth functions of $\alpha$ and $\beta$. Consider \eqref{eq6.49} and \eqref{eq6.50}, notice that the points $(\alpha_{m-1}(u,v),\beta_{m-1}(u,v))$ and $(\alpha_m(u,v),\beta_m(u,v))$ are both located inside a ball centered at $(\beta_0,\beta_0)$ in $\mathbb{R}^2$. Therefore, the line segment connecting them is also within this ball. So we have

\begin{equation}\label{eq6.212}
\begin{aligned}
\left|\Delta_m c_{in}\right|,\left|\Delta_m c_{out}\right|\le Cv^2\left(\Vert\Delta_m\tilde{\alpha}\Vert_0+\Vert\Delta_m\overset{1}{\tilde{\alpha}}_+\Vert_1+\Vert\Delta_m\tilde{\beta}\Vert_0+\Vert\Delta_m\overset{2}{\tilde{\beta}}_+\Vert_2\right),
\end{aligned}
\end{equation}
Taking the partial derivatives of $c_{in}$ and $c_{out}$ with respect to $u$ and $v$, respectively, we can obtain

\begin{equation}\label{eq6.213}
\begin{aligned}
&\left|\Delta_m\frac{\partial c_{in}}{\partial u}\right|,\left|\Delta_m\frac{\partial c_{out}}{\partial u}\right|\le Cv\left(\Vert\Delta_m\tilde{\alpha}\Vert_0+\Vert\Delta_m\overset{1}{\tilde{\alpha}}_+\Vert_1+ \Vert\Delta_m\tilde{\beta}\Vert_0\right)+Cv^2\Vert\Delta_m\overset{2}{\tilde{\beta}}_+\Vert_2,
\end{aligned}
\end{equation}
\begin{equation}\label{eq6.214}
\begin{aligned}
&\left|\Delta_m\frac{\partial c_{in}}{\partial v}\right|,\left|\Delta_m\frac{\partial c_{out}}{\partial v}\right|\le Cv\left(\Vert\Delta_m\tilde{\alpha}\Vert_0+ \Vert\Delta_m\tilde{\beta}\Vert_0+\Vert\Delta_m\overset{2}{\tilde{\beta}}_+\Vert_2\right)+Cv^2\Vert\Delta_m\overset{1}{\tilde{\alpha}}_+\Vert_1.
\end{aligned}
\end{equation}
If we take the second order partial derivatives of $c_{in}$ and $c_{out}$ with respect to $u$ and $v$, respectively, we can obtain

\begin{equation}\label{eq6.215}
\begin{aligned}
&\left|\Delta_m\frac{\partial^2 c_{in}}{\partial u^2}\right|,\left|\Delta_m\frac{\partial^2 c_{out}}{\partial u^2}\right|\le  C\left(\Vert\Delta_m\tilde{\alpha}\Vert_0+\Vert\Delta_m\overset{1}{\tilde{\alpha}}_+\Vert_1+ \Vert\Delta_m\tilde{\beta}\Vert_0\right)+Cv^2\Vert\Delta_m\overset{2}{\tilde{\beta}}_+\Vert_2,
\end{aligned}
\end{equation}
\begin{equation}\label{eq6.216}
\begin{aligned}
&\left|\Delta_m\frac{\partial^2 c_{in}}{\partial u\partial v}\right|,\left|\Delta_m\frac{\partial^2 c_{out}}{\partial u\partial v}\right|\le C\left(\Vert\Delta_m\tilde{\alpha}\Vert_0+\Vert\Delta_m\tilde{\beta}\Vert_0\right)+Cv\left(\Vert\Delta_m\overset{1}{\tilde{\alpha}}_+\Vert_1+\Vert\Delta_m\overset{2}{\tilde{\beta}}_+\Vert_2\right),
\end{aligned}
\end{equation}
\begin{equation}\label{eq6.217}
\begin{aligned}
&\left|\Delta_m\frac{\partial^2 c_{in}}{\partial v^2}\right|,\left|\Delta_m\frac{\partial^2 c_{out}}{\partial v^2}\right|\le C\left(\Vert\Delta_m\tilde{\alpha}\Vert_0+\Vert\Delta_m\tilde{\beta}\Vert_0+\Vert\Delta_m\overset{2}{\tilde{\beta}}_+\Vert_2\right)+Cv^2\Vert\Delta_m\overset{1}{\tilde{\alpha}}_+\Vert_1.
\end{aligned}
\end{equation}
Then by the definitions of $g_m(u,v)$ and $h_m(u,v)$, that is, $g_m(u,v)=\frac{1}{c_{in,m}(u,v)}$, $h_m(u,v)=\frac{1}{c_{out,m}(u,v)}$, we have

\begin{equation}\label{eq6.218}
\begin{aligned}
\left|\Delta_m g\right|,\left|\Delta_m h\right|\le Cv^2\left(\Vert\Delta_m\tilde{\alpha}\Vert_0+\Vert\Delta_m\overset{1}{\tilde{\alpha}}_+\Vert_1+\Vert\Delta_m\tilde{\beta}\Vert_0+\Vert\Delta_m\overset{2}{\tilde{\beta}}_+\Vert_2\right),
\end{aligned}
\end{equation}
\begin{equation}\label{eq6.219}
\begin{aligned}
\left|\Delta_m\frac{\partial g}{\partial u}\right|,\left|\Delta_m\frac{\partial h}{\partial u}\right| \le Cv\left(\Vert\Delta_m\tilde{\alpha}\Vert_0+\Vert\Delta_m\overset{1}{\tilde{\alpha}}_+\Vert_1+ \Vert\Delta_m\tilde{\beta}\Vert_0\right)+Cv^2\Vert\Delta_m\overset{2}{\tilde{\beta}}_+\Vert_2,
\end{aligned}
\end{equation}
\begin{equation}\label{eq6.220}
\begin{aligned}
\left|\Delta_m\frac{\partial g}{\partial v}\right|,\left|\Delta_m\frac{\partial h}{\partial v}\right| \le Cv\left(\Vert\Delta_m\tilde{\alpha}\Vert_0+ \Vert\Delta_m\tilde{\beta}\Vert_0+\Vert\Delta_m\overset{2}{\tilde{\beta}}_+\Vert_2\right)+Cv^2\Vert\Delta_m\overset{1}{\tilde{\alpha}}_+\Vert_1,
\end{aligned}
\end{equation}
\begin{equation}\label{eq6.221}
\begin{aligned}
\left|\Delta_m\frac{\partial^2 g}{\partial u^2}\right|,\left|\Delta_m\frac{\partial^2 h}{\partial u^2}\right| \le C\left(\Vert\Delta_m\tilde{\alpha}\Vert_0+\Vert\Delta_m\overset{1}{\tilde{\alpha}}_+\Vert_1+ \Vert\Delta_m\tilde{\beta}\Vert_0\right)+Cv^2\Vert\Delta_m\overset{2}{\tilde{\beta}}_+\Vert_2,
\end{aligned}
\end{equation}
\begin{equation}\label{eq6.222}
\begin{aligned}
\left|\Delta_m\frac{\partial^2 g}{\partial u\partial v}\right|,\left|\Delta_m\frac{\partial^2 h}{\partial u\partial v}\right| \le C\left(\Vert\Delta_m\tilde{\alpha}\Vert_0+\Vert\Delta_m\tilde{\beta}\Vert_0\right)+Cv\left(\Vert\Delta_m\overset{1}{\tilde{\alpha}}_+\Vert_1+\Vert\Delta_m\overset{2}{\tilde{\beta}}_+\Vert_2\right),
\end{aligned}
\end{equation}
\begin{equation}\label{eq6.223}
\begin{aligned}
\left|\Delta_m\frac{\partial^2 g}{\partial v^2}\right|,\left|\Delta_m\frac{\partial^2 h}{\partial v^2}\right|\le C\left(\Vert\Delta_m\tilde{\alpha}\Vert_0+\Vert\Delta_m\tilde{\beta}\Vert_0+\Vert\Delta_m\overset{2}{\tilde{\beta}}_+\Vert_2\right)+Cv^2\Vert\Delta_m\overset{1}{\tilde{\alpha}}_+\Vert_1.
\end{aligned}
\end{equation}
In addition, obviously $\Delta_m r(u,v)$ satisfies

\begin{flalign}
&\left|\Delta_m r\right|\le Cv^2\Vert\Delta_m r\Vert_0,\label{eq6.224}\\
&\left|\Delta_m\frac{\partial r}{\partial u}\right|,\left|\Delta_m\frac{\partial r}{\partial v}\right|\le Cv\Vert\Delta_m r\Vert_0,\label{eq6.225}\\
&\left|\Delta_m\frac{\partial^2 r}{\partial u^2}\right|,\left|\Delta_m\frac{\partial^2 r}{\partial u\partial v}\right|,\left|\Delta_m\frac{\partial^2 r}{\partial v^2}\right|\le \Vert \Delta_m r\Vert_0.\label{eq6.226}
\end{flalign}

Next, to estimate $\Delta_m t(u,v)$, we first estimate $\Delta_m\phi(u,v)$ and $\Delta_m\psi(u,v)$. By \eqref{eq6.87}-\eqref{eq6.92}, we have

\begin{equation}\label{eq6.227}
\begin{aligned}
\left|\Delta_m\phi\right|,\left|\Delta_m\psi\right|\le Cv^2\left(\Vert\Delta_m\tilde{\alpha}\Vert_0+\Vert\Delta_m\overset{1}{\tilde{\alpha}}_+\Vert_1+\Vert\Delta_m\tilde{\beta}\Vert_0+\Vert\Delta_m\overset{2}{\tilde{\beta}}_+\Vert_2\right)+Cv\Vert\Delta_m r\Vert_0,
\end{aligned}
\end{equation}
\begin{flalign}
&\left|\Delta_m\frac{\partial\phi}{\partial u}\right|,\left|\Delta_m\frac{\partial\psi}{\partial u}\right|\le Cv\left(\Vert\Delta_m\tilde{\alpha}\Vert_0+\Vert\Delta_m\overset{1}{\tilde{\alpha}}_+\Vert_1+ \Vert\Delta_m\tilde{\beta}\Vert_0\right)+Cv^2\Vert\Delta_m\overset{2}{\tilde{\beta}}_+\Vert_2+C\Vert\Delta_m r\Vert_0,\label{eq6.228}\\
&\left|\Delta_m\frac{\partial\phi}{\partial v}\right|,\left|\Delta_m\frac{\partial\psi}{\partial v}\right|\le Cv\left(\Vert\Delta_m\tilde{\alpha}\Vert_0+ \Vert\Delta_m\tilde{\beta}\Vert_0+\Vert\Delta_m\overset{2}{\tilde{\beta}}_+\Vert_2\right)+Cv^2\Vert\Delta_m\overset{1}{\tilde{\alpha}}_+\Vert_1+C\Vert\Delta_m r\Vert_0.\label{eq6.229}
\end{flalign}
From the calculations in \eqref{eq6.93}-\eqref{eq6.98}, using the estimates above, we have

\begin{equation}\label{eq6.230}
\begin{aligned}
\left|\Delta_m t\right|\le Cv^3\left(\Vert\Delta_m\tilde{\alpha}\Vert_0+\Vert\Delta_m\overset{1}{\tilde{\alpha}}_+\Vert_1+\Vert\Delta_m\tilde{\beta}\Vert_0+\Vert\Delta_m\overset{2}{\tilde{\beta}}_+\Vert_2\right)+Cv^2\Vert\Delta_m r\Vert_0,
\end{aligned}
\end{equation}
\begin{equation}\label{eq6.231}
\begin{aligned}
\left|\Delta_m\frac{\partial t}{\partial u}\right|,\left|\Delta_m\frac{\partial t}{\partial v}\right|\le Cv^2\left(\Vert\Delta_m\tilde{\alpha}\Vert_0+\Vert\Delta_m\overset{1}{\tilde{\alpha}}_+\Vert_1+\Vert\Delta_m\tilde{\beta}\Vert_0+\Vert\Delta_m\overset{2}{\tilde{\beta}}_+\Vert_2\right)+Cv\Vert\Delta_m r\Vert_0,
\end{aligned}
\end{equation}
To estimate $\Delta_m\frac{\partial^2 t}{\partial u^2}(u,v)$, we first consider $\Delta_m\left(\frac{\partial}{\partial u}(\int_u^v\frac{\partial\psi}{\partial u}(u,v^{'})dv^{'})\right)$. Based on the previous calculation, we can get

\begin{equation*}
\begin{aligned}
&\Delta_m\left(\frac{\partial}{\partial u}(\int_u^v\frac{\partial\psi}{\partial u}(u,v^{'})dv^{'})\right)\\
\le & C\mathop{sup}\limits_{T_\varepsilon}\left(\left|\Delta_m h\right|+\left|\Delta_m\frac{\partial h}{\partial v}\right|+\left|\Delta_m\frac{\partial h}{\partial u}\right|+\left|\Delta_m\frac{\partial r}{\partial u}\right|+\left|\Delta_m\frac{\partial r}{\partial v}\right|+\left|\Delta_m\frac{\partial^2 r}{\partial u^2}\right|+\left|\Delta_m\frac{\partial^2 r}{\partial u\partial v}\right|\right)\\
&+Cv\mathop{sup}\limits_{T_\varepsilon}\left(\left|\Delta_m\frac{\partial^2 h}{\partial u^2}\right|+\left|\Delta_m\frac{\partial^2 h}{\partial u\partial v}\right|\right)\\
\le & Cv\left(\Vert\Delta_m\tilde{\alpha}\Vert_0+\Vert\Delta_m\overset{1}{\tilde{\alpha}}_+\Vert_1+\Vert\Delta_m\tilde{\beta}\Vert_0+\Vert\Delta_m\overset{2}{\tilde{\beta}}_+\Vert_2\right)+C\Vert\Delta_m r\Vert_0,
\end{aligned}
\end{equation*}
So we have

\begin{equation}\label{eq6.232}
\begin{aligned}
\left|\Delta_m\frac{\partial^2 t}{\partial u^2}\right|&\le \left|\Delta_m\frac{\partial\phi}{\partial u}\right|+\left|\Delta_m\frac{\partial\phi}{\partial v}\right|+\Delta_m\left(\frac{\partial}{\partial u}(\int_u^v\frac{\partial\psi}{\partial u}(u,v^{'})dv^{'})\right)\\
&\le Cv\left(\Vert\Delta_m\tilde{\alpha}\Vert_0+\Vert\Delta_m\overset{1}{\tilde{\alpha}}_+\Vert_1+\Vert\Delta_m\tilde{\beta}\Vert_0+\Vert\Delta_m\overset{2}{\tilde{\beta}}_+\Vert_2\right)+C\Vert\Delta_m r\Vert_0,
\end{aligned}
\end{equation}
\begin{equation}\label{eq6.233}
\begin{aligned}
\left|\Delta_m\frac{\partial^2 t}{\partial u\partial v}\right|&=\left|\Delta_m\frac{\partial\psi}{\partial u}\right|\le Cv\left(\Vert\Delta_m\tilde{\alpha}\Vert_0+\Vert\Delta_m\overset{1}{\tilde{\alpha}}_+\Vert_1+ \Vert\Delta_m\tilde{\beta}\Vert_0\right)+Cv^2\Vert\Delta_m\overset{2}{\tilde{\beta}}_+\Vert_2+C\Vert\Delta_m r\Vert_0,
\end{aligned}
\end{equation}
\begin{equation}\label{eq6.234}
\begin{aligned}
\left|\Delta_m\frac{\partial^2 t}{\partial v^2}\right|&=\left|\Delta_m\frac{\partial\psi}{\partial v}\right|\le Cv\left(\Vert\Delta_m\tilde{\alpha}\Vert_0+ \Vert\Delta_m\tilde{\beta}\Vert_0+\Vert\Delta_m\overset{2}{\tilde{\beta}}_+\Vert_2\right)+Cv^2\Vert\Delta_m\overset{1}{\tilde{\alpha}}_+\Vert_1+C\Vert\Delta_m r\Vert_0.
\end{aligned}
\end{equation}

Next we can estimate $\Delta_{m+1}r(u,v)$, for which we first estimate $\Delta_m\rho(u,v)$ and $\Delta_m w(u,v)$. Similarly to $\Delta_m c_{in}(u,v)$ and $\Delta_m c_{out}(u.v)$, we have

\begin{equation}\label{eq6.235}
\begin{aligned}
&\left|\Delta_m\rho\right|,\left|\Delta_m w\right|\le Cv^2\left(\Vert\Delta_m\tilde{\alpha}\Vert_0+\Vert\Delta_m\overset{1}{\tilde{\alpha}}_+\Vert_1+\Vert\Delta_m\tilde{\beta}\Vert_0+\Vert\Delta_m\overset{2}{\tilde{\beta}}_+\Vert_2\right).
\end{aligned}
\end{equation}

\begin{equation}\label{eq6.236}
\begin{aligned}
&\left|\Delta_m\frac{\partial \rho}{\partial u}\right|,\left|\Delta_m\frac{\partial w}{\partial u}\right|\le Cv\left(\Vert\Delta_m\tilde{\alpha}\Vert_0+\Vert\Delta_m\overset{1}{\tilde{\alpha}}_+\Vert_1+ \Vert\Delta_m\tilde{\beta}\Vert_0\right)+Cv^2\Vert\Delta_m\overset{2}{\tilde{\beta}}_+\Vert_2,
\end{aligned}
\end{equation}

\begin{equation}\label{eq6.237}
\begin{aligned}
&\left|\Delta_m\frac{\partial \rho}{\partial v}\right|,\left|\Delta_m\frac{\partial w}{\partial v}\right|\le Cv\left(\Vert\Delta_m\tilde{\alpha}\Vert_0+ \Vert\Delta_m\tilde{\beta}\Vert_0+\Vert\Delta_m\overset{2}{\tilde{\beta}}_+\Vert_2\right)+Cv^2\Vert\Delta_m\overset{1}{\tilde{\alpha}}_+\Vert_1.
\end{aligned}
\end{equation}

\begin{equation}\label{eq6.238}
\begin{aligned}
&\left|\Delta_m\frac{\partial^2 \rho}{\partial u^2}\right|,\left|\Delta_m\frac{\partial^2 w}{\partial u^2}\right|\le C\left(\Vert\Delta_m\tilde{\alpha}\Vert_0+\Vert\Delta_m\overset{1}{\tilde{\alpha}}_+\Vert_1+ \Vert\Delta_m\tilde{\beta}\Vert_0\right)+Cv^2\Vert\Delta_m\overset{2}{\tilde{\beta}}_+\Vert_2,
\end{aligned}
\end{equation}

\begin{equation}\label{eq6.239}
\begin{aligned}
&\left|\Delta_m\frac{\partial^2 \rho}{\partial u\partial v}\right|,\left|\Delta_m\frac{\partial^2 w}{\partial u\partial v}\right|\le C\left(\Vert\Delta_m\tilde{\alpha}\Vert_0+\Vert\Delta_m\tilde{\beta}\Vert_0\right)+Cv\left(\Vert\Delta_m\overset{1}{\tilde{\alpha}}_+\Vert_1+\Vert\Delta_m\overset{2}{\tilde{\beta}}_+\Vert_2\right),
\end{aligned}
\end{equation}

\begin{equation}\label{eq6.240}
\begin{aligned}
&\left|\Delta_m\frac{\partial^2 \rho}{\partial v^2}\right|,\left|\Delta_m\frac{\partial^2 w}{\partial v^2}\right|\le C\left(\Vert\Delta_m\tilde{\alpha}\Vert_0+\Vert\Delta_m\tilde{\beta}\Vert_0+\Vert\Delta_m\overset{2}{\tilde{\beta}}_+\Vert_2\right)+Cv^2\Vert\Delta_m\overset{1}{\tilde{\alpha}}_+\Vert_1.
\end{aligned}
\end{equation}
Therefore, according to the definitions of $\overset{i}{\rho}_{+m}$ and $\overset{i}{w}_{+m}$ in \eqref{eq6.13} and \eqref{eq6.14}, we have

\begin{flalign}
&\left|\Delta_m\overset{i}{\rho}_+\right|,\left|\Delta_m\overset{i}{w}_+\right|\le Cv^2\left(\Vert\Delta_m\tilde{\alpha}\Vert_0+\Vert\Delta_m\overset{1}{\tilde{\alpha}}_+\Vert_1+\Vert\Delta_m\tilde{\beta}\Vert_0+\Vert\Delta_m\overset{2}{\tilde{\beta}}_+\Vert_2\right),\label{eq6.241}\\
&\left|\Delta_m\overset{i}{\rho}_+^{'}\right|,\left|\Delta_m\overset{i}{w}_+^{'}\right|\le Cv\left(\Vert\Delta_m\tilde{\alpha}\Vert_0+\Vert\Delta_m\overset{1}{\tilde{\alpha}}_+\Vert_1+\Vert\Delta_m\tilde{\beta}\Vert_0+\Vert\Delta_m\overset{2}{\tilde{\beta}}_+\Vert_2\right),\label{eq6.242}\\
&\left|\Delta_m\overset{i}{\rho}_+^{''}\right|,\left|\Delta_m\overset{i}{w}_+^{''}\right|\le C\left(\Vert\Delta_m\tilde{\alpha}\Vert_0+\Vert\Delta_m\overset{1}{\tilde{\alpha}}_+\Vert_1+\Vert\Delta_m\tilde{\beta}\Vert_0+\Vert\Delta_m\overset{2}{\tilde{\beta}}_+\Vert_2\right).\label{eq6.243}
\end{flalign}

In addition, for the Riemann invariants in the state ahead, using the estimates for $\Delta_m\overset{i}{t}_+$, $\Delta_m\overset{i}{r}_+$ and their derivatives which can be obtained by \eqref{eq6.230}-\eqref{eq6.234} and \eqref{eq6.224}-\eqref{eq6.226}, we can conclude

\begin{equation}\label{eq6.244}
\begin{aligned}
\left|\Delta_m \overset{i}{\alpha}_-\right|,\left|\Delta_m\overset{i}{\beta}_-\right|\le Cv^3\left(\Vert\Delta_m\tilde{\alpha}\Vert_0+\Vert\Delta_m\overset{1}{\tilde{\alpha}}_+\Vert_1+\Vert\Delta_m\tilde{\beta}\Vert_0+\Vert\Delta_m\overset{2}{\tilde{\beta}}_+\Vert_2\right)+Cv^2\Vert\Delta_m r\Vert_0,
\end{aligned}
\end{equation}
\begin{equation}\label{eq6.245}
\begin{aligned}
\left|\Delta_m\overset{i}{\alpha}_-^{'}\right|,\left|\Delta_m\overset{i}{\beta}_-^{'}\right|\le Cv^2\left(\Vert\Delta_m\tilde{\alpha}\Vert_0+\Vert\Delta_m\overset{1}{\tilde{\alpha}}_+\Vert_1+\Vert\Delta_m\tilde{\beta}\Vert_0+\Vert\Delta_m\overset{2}{\tilde{\beta}}_+\Vert_2\right)+Cv\Vert\Delta_m r\Vert_0,
\end{aligned}
\end{equation}
\begin{equation}\label{eq6.246}
\begin{aligned}
\left|\Delta_m \overset{i}{\alpha}_-^{''}\right|,\left|\Delta_m\overset{i}{\beta}_-^{''}\right|\le Cv\left(\Vert\Delta_m\tilde{\alpha}\Vert_0+\Vert\Delta_m\overset{1}{\tilde{\alpha}}_+\Vert_1+\Vert\Delta_m\tilde{\beta}\Vert_0+\Vert\Delta_m\overset{2}{\tilde{\beta}}_+\Vert_2\right)+C\Vert\Delta_m r\Vert_0.
\end{aligned}
\end{equation}
Therefore, according to the definitions of $\overset{i}{\rho}_{-m}$ and $\overset{i}{w}_{-m}$ in \eqref{eq6.13} and \eqref{eq6.14}, we have

\begin{equation}\label{eq6.247}
\begin{aligned}
\left|\Delta_m\overset{i}{\rho}_-\right|,\left|\Delta_m\overset{i}{w}_-\right|\le Cv^3\left(\Vert\Delta_m\tilde{\alpha}\Vert_0+\Vert\Delta_m\overset{1}{\tilde{\alpha}}_+\Vert_1+\Vert\Delta_m\tilde{\beta}\Vert_0+\Vert\Delta_m\overset{2}{\tilde{\beta}}_+\Vert_2\right)+Cv^2\Vert\Delta_m r\Vert_0,
\end{aligned}
\end{equation}
\begin{equation}\label{eq6.248}
\begin{aligned}
\left|\Delta_m\overset{i}{\rho}_-^{'}\right|,\left|\Delta_m\overset{i}{w}_-^{'}\right|\le Cv^2\left(\Vert\Delta_m\tilde{\alpha}\Vert_0+\Vert\Delta_m\overset{1}{\tilde{\alpha}}_+\Vert_1+\Vert\Delta_m\tilde{\beta}\Vert_0+\Vert\Delta_m\overset{2}{\tilde{\beta}}_+\Vert_2\right)+Cv\Vert\Delta_m r\Vert_0.
\end{aligned}
\end{equation}
According to the expressions for $\overset{i}{V}_m$ in \eqref{eq6.12}, using the estimates obtained above, we have

\begin{equation}\label{eq6.249}
\begin{aligned}
\left|\Delta_m\overset{i}{V}\right|\le Cv^2\left(\Vert\Delta_m\tilde{\alpha}\Vert_0+\Vert\Delta_m\overset{1}{\tilde{\alpha}}_+\Vert_1+\Vert\Delta_m\tilde{\beta}\Vert_0+\Vert\Delta_m\overset{2}{\tilde{\beta}}_+\Vert_2+\Vert\Delta_m r\Vert_0\right),
\end{aligned}
\end{equation}
\begin{equation}\label{eq6.250}
\begin{aligned}
\left|\Delta_m\overset{i}{V}^{'}\right|\le Cv\left(\Vert\Delta_m\tilde{\alpha}\Vert_0+\Vert\Delta_m\overset{1}{\tilde{\alpha}}_+\Vert_1+\Vert\Delta_m\tilde{\beta}\Vert_0+\Vert\Delta_m\overset{2}{\tilde{\beta}}_+\Vert_2+\Vert\Delta_m r\Vert_0\right).
\end{aligned}
\end{equation}
According to the expressions for $\overset{i}{\Gamma}_m$ in \eqref{eq6.111} and \eqref{eq6.112}, we have

\begin{equation}\label{eq6.251}
\begin{aligned}
\left|\Delta_m\overset{i}{\Gamma}\right|\le Cv^2\left(\Vert\Delta_m\tilde{\alpha}\Vert_0+\Vert\Delta_m\overset{1}{\tilde{\alpha}}_+\Vert_1+\Vert\Delta_m\tilde{\beta}\Vert_0+\Vert\Delta_m\overset{2}{\tilde{\beta}}_+\Vert_2+\Vert\Delta_m r\Vert_0\right),
\end{aligned}
\end{equation}
\begin{equation}\label{eq6.252}
\begin{aligned}
\left|\Delta_m\overset{i}{\Gamma}^{'}\right|\le Cv\left(\Vert\Delta_m\tilde{\alpha}\Vert_0+\Vert\Delta_m\overset{1}{\tilde{\alpha}}_+\Vert_1+\Vert\Delta_m\tilde{\beta}\Vert_0+\Vert\Delta_m\overset{2}{\tilde{\beta}}_+\Vert_2+\Vert\Delta_m r\Vert_0\right).
\end{aligned}
\end{equation}
Therefore, as defined in \eqref{eq6.22}, we can obtain the estimates for $\overset{i}{\gamma}_m$ and their first order derivatives:

\begin{equation}\label{eq6.253}
\begin{aligned}
\left|\Delta_m\overset{i}{\gamma}\right|\le Cv^2\left(\Vert\Delta_m\tilde{\alpha}\Vert_0+\Vert\Delta_m\overset{1}{\tilde{\alpha}}_+\Vert_1+\Vert\Delta_m\tilde{\beta}\Vert_0+\Vert\Delta_m\overset{2}{\tilde{\beta}}_+\Vert_2+\Vert\Delta_m r\Vert_0\right),
\end{aligned}
\end{equation}
\begin{equation}\label{eq6.254}
\begin{aligned}
\left|\Delta_m\overset{i}{\gamma}^{'}\right|\le Cv\left(\Vert\Delta_m\tilde{\alpha}\Vert_0+\Vert\Delta_m\overset{1}{\tilde{\alpha}}_+\Vert_1+\Vert\Delta_m\tilde{\beta}\Vert_0+\Vert\Delta_m\overset{2}{\tilde{\beta}}_+\Vert_2+\Vert\Delta_m r\Vert_0\right).
\end{aligned}
\end{equation}

To estimate $\Delta_m M(u,v)$, we first estimate $\Delta_m\mu(u,v)$ and $\Delta_m\nu(u,v)$. According to the definitions for $\mu_m(u,v)$ and $\nu_m(u,v)$ in \eqref{eq6.23} and the properties of $\Delta_m c_{in}(u,v)$ and $\Delta_m c_{out}(u,v)$ obtained previously in \eqref{eq6.212}-\eqref{eq6.217}, we can easily get

\begin{equation}\label{eq6.255}
\begin{aligned}
\left|\Delta_m\mu\right|\le Cv\left(\Vert\Delta_m\tilde{\alpha}\Vert_0+ \Vert\Delta_m\tilde{\beta}\Vert_0+\Vert\Delta_m\overset{2}{\tilde{\beta}}_+\Vert_2\right)+Cv^2\Vert\Delta_m\overset{1}{\tilde{\alpha}}_+\Vert_1,
\end{aligned}
\end{equation}
\begin{equation}\label{eq6.256}
\begin{aligned}
\left|\Delta_m\nu\right|\le Cv\left(\Vert\Delta_m\tilde{\alpha}\Vert_0+\Vert\Delta_m\overset{1}{\tilde{\alpha}}_+\Vert_1+ \Vert\Delta_m\tilde{\beta}\Vert_0\right)+Cv^2\Vert\Delta_m\overset{2}{\tilde{\beta}}_+\Vert_2,
\end{aligned}
\end{equation}
\begin{equation}\label{eq6.257}
\begin{aligned}
\left|\Delta_m\frac{\partial\mu}{\partial u}\right|\le C\left(\Vert\Delta_m\tilde{\alpha}\Vert_0+\Vert\Delta_m\tilde{\beta}\Vert_0\right)+Cv\left(\Vert\Delta_m\overset{1}{\tilde{\alpha}}_+\Vert_1+\Vert\Delta_m\overset{2}{\tilde{\beta}}_+\Vert_2\right),
\end{aligned}
\end{equation}
\begin{equation}\label{eq6.258}
\begin{aligned}
\left|\Delta_m\frac{\partial\mu}{\partial v}\right|\le C\left(\Vert\Delta_m\tilde{\alpha}\Vert_0+\Vert\Delta_m\tilde{\beta}\Vert_0+\Vert\Delta_m\overset{2}{\tilde{\beta}}_+\Vert_2\right)+Cv^2\Vert\Delta_m\overset{1}{\tilde{\alpha}}_+\Vert_1,
\end{aligned}
\end{equation}
\begin{equation}\label{eq6.259}
\begin{aligned}
\left|\Delta_m\frac{\partial\nu}{\partial u}\right|\le C\left(\Vert\Delta_m\tilde{\alpha}\Vert_0+\Vert\Delta_m\overset{1}{\tilde{\alpha}}_+\Vert_1+ \Vert\Delta_m\tilde{\beta}\Vert_0\right)+Cv^2\Vert\Delta_m\overset{2}{\tilde{\beta}}_+\Vert_2,
\end{aligned}
\end{equation}
\begin{equation}\label{eq6.260}
\begin{aligned}
\left|\Delta_m\frac{\partial\nu}{\partial v}\right|\le C\left(\Vert\Delta_m\tilde{\alpha}\Vert_0+\Vert\Delta_m\tilde{\beta}\Vert_0\right)+Cv\left(\Vert\Delta_m\overset{1}{\tilde{\alpha}}_+\Vert_1+\Vert\Delta_m\overset{2}{\tilde{\beta}}_+\Vert_2\right).
\end{aligned}
\end{equation}
Therefore, combining the definition of $M_m(u,v)$ in \eqref{eq6.21} with the estimates for $\Delta_m r(u,v)$ and its derivatives obtained in \eqref{eq6.224}-\eqref{eq6.226} earlier, we have

\begin{equation}\label{eq6.261}
\begin{aligned}
\left|\Delta_m M\right|\le Cv\left(\Vert\Delta_m\tilde{\alpha}\Vert_0+ \Vert\Delta_m\tilde{\beta}\Vert_0+\Vert\Delta_m\overset{2}{\tilde{\beta}}_+\Vert_2+\Vert\Delta_m\overset{1}{\tilde{\alpha}}_+\Vert_1+\Vert\Delta_m r\Vert_0\right),
\end{aligned}
\end{equation}
\begin{equation}\label{eq6.262}
\begin{aligned}
\left|\Delta_m\frac{\partial M}{\partial u}\right|\le C\left(\Vert\Delta_m\tilde{\alpha}\Vert_0+ \Vert\Delta_m\tilde{\beta}\Vert_0+\Vert\Delta_m\overset{1}{\tilde{\alpha}}_+\Vert_1+\Vert\Delta_m r\Vert_0\right)+Cv\Vert\Delta_m\overset{2}{\tilde{\beta}}_+\Vert_2,
\end{aligned}
\end{equation}
\begin{equation}\label{eq6.263}
\begin{aligned}
\left|\Delta_m\frac{\partial M}{\partial v}\right|\le C\left(\Vert\Delta_m\tilde{\alpha}\Vert_0+ \Vert\Delta_m\tilde{\beta}\Vert_0+\Vert\Delta_m\overset{2}{\tilde{\beta}}_+\Vert_2+\Vert\Delta_m r\Vert_0\right)+Cv\Vert\Delta_m\overset{1}{\tilde{\alpha}}_+\Vert_1.
\end{aligned}
\end{equation}
Using these, according to the definition of $\overset{i}{\Lambda}_m$ $(i=1,2)$ in \eqref{eq6.19} and \eqref{eq6.20}, when $\varepsilon$ is small enough, we have

\begin{equation}\label{eq6.264}
\begin{aligned}
\left|\Delta_m\overset{i}{\Lambda}\right|\le Cv\left(\Vert\Delta_m\tilde{\alpha}\Vert_0+ \Vert\Delta_m\tilde{\beta}\Vert_0+\Vert\Delta_m\overset{2}{\tilde{\beta}}_+\Vert_2+\Vert\Delta_m\overset{1}{\tilde{\alpha}}_+\Vert_1\right)+k_1\Vert\Delta_m r\Vert_0,
\end{aligned}
\end{equation}
where $0<k_1<1$. So based on the expressions for the second order derivatives of $r_{m+1}(u,v)$ in \eqref{eq6.134}-\eqref{eq6.136}, when $\varepsilon$ is small enough, we can get

\begin{equation}\label{eq6.265}
\begin{aligned}
\left|\Delta_{m+1}\frac{\partial^2 r}{\partial u^2}\right|\le Cv\left(\Vert\Delta_m\tilde{\alpha}\Vert_0+ \Vert\Delta_m\tilde{\beta}\Vert_0+\Vert\Delta_m\overset{2}{\tilde{\beta}}_+\Vert_2+\Vert\Delta_m\overset{1}{\tilde{\alpha}}_+\Vert_1\right)+k_1\Vert\Delta_m r\Vert_0,
\end{aligned}
\end{equation}
\begin{equation}\label{eq6.266}
\begin{aligned}
\left|\Delta_{m+1}\frac{\partial^2 r}{\partial u\partial v}\right|\le Cv\left(\Vert\Delta_m\tilde{\alpha}\Vert_0+ \Vert\Delta_m\tilde{\beta}\Vert_0+\Vert\Delta_m\overset{2}{\tilde{\beta}}_+\Vert_2+\Vert\Delta_m\overset{1}{\tilde{\alpha}}_+\Vert_1+\Vert\Delta_m r\Vert_0\right),
\end{aligned}
\end{equation}
\begin{equation}\label{eq6.267}
\begin{aligned}
\left|\Delta_{m+1}\frac{\partial^2 r}{\partial v^2}\right|\le Cv\left(\Vert\Delta_m\tilde{\alpha}\Vert_0+ \Vert\Delta_m\tilde{\beta}\Vert_0+\Vert\Delta_m\overset{2}{\tilde{\beta}}_+\Vert_2+\Vert\Delta_m\overset{1}{\tilde{\alpha}}_+\Vert_1\right)+k_1\Vert\Delta_m r\Vert_0.
\end{aligned}
\end{equation}
So in summary, when $\varepsilon$ is small enough, we have our estimate of $\Vert\Delta_{m+1} r\Vert_0$:

\begin{equation}\label{eq6.268}
\Vert\Delta_{m+1} r\Vert_0\le C\varepsilon\left(\Vert\Delta_m\tilde{\alpha}\Vert_0+ \Vert\Delta_m\tilde{\beta}\Vert_0+\Vert\Delta_m\overset{2}{\tilde{\beta}}_+\Vert_2+\Vert\Delta_m\overset{1}{\tilde{\alpha}}_+\Vert_1\right)+k_1\Vert\Delta_m r\Vert_0.
\end{equation}

Next, to estimate $\Delta_{m+1}\tilde{\alpha}(u,v)$ and $\Delta_{m+1}\tilde{\beta}(u,v)$, we first estimate $\Delta_m A(u,v)$. Based on the definition of $A_m(u,v)$ in \eqref{eq6.29} and our previous estimates of $\Delta_m \rho(u,v)$, $\Delta_m w(u,v)$, $\Delta_m r(u,v)$ and their derivatives, we can easily obtain

\begin{equation}\label{eq6.269}
\begin{aligned}
\left|\Delta_m A\right|\le Cv^2\left(\Vert\Delta_m\tilde{\alpha}\Vert_0+\Vert\Delta_m\overset{1}{\tilde{\alpha}}_+\Vert_1+\Vert\Delta_m\tilde{\beta}\Vert_0+\Vert\Delta_m\overset{2}{\tilde{\beta}}_+\Vert_2\right)+Cv^3\Vert\Delta_m r\Vert_0,
\end{aligned}
\end{equation}
\begin{equation}\label{eq6.270}
\begin{aligned}
\left|\Delta_m \frac{\partial A}{\partial u}\right|\le Cv\left(\Vert\Delta_m\tilde{\alpha}\Vert_0+\Vert\Delta_m\overset{1}{\tilde{\alpha}}_+\Vert_1+ \Vert\Delta_m\tilde{\beta}\Vert_0\right)+Cv^2\left(\Vert\Delta_m\overset{2}{\tilde{\beta}}_+\Vert_2+\Vert\Delta_m r\Vert_0\right),
\end{aligned}
\end{equation}
\begin{equation}\label{eq6.271}
\begin{aligned}
\left|\Delta_m\frac{\partial A}{\partial v}\right|\le Cv\left(\Vert\Delta_m\tilde{\alpha}\Vert_0+ \Vert\Delta_m\tilde{\beta}\Vert_0+\Vert\Delta_m\overset{2}{\tilde{\beta}}_+\Vert_2\right)+Cv^2\left(\Vert\Delta_m\overset{1}{\tilde{\alpha}}_+\Vert_1+\Vert\Delta_m r\Vert_0\right),
\end{aligned}
\end{equation}
\begin{equation}\label{eq6.272}
\begin{aligned}
\left|\Delta_m\frac{\partial^2 A}{\partial u^2}\right|\le C\left(\Vert\Delta_m\tilde{\alpha}\Vert_0+\Vert\Delta_m\overset{1}{\tilde{\alpha}}_+\Vert_1+ \Vert\Delta_m\tilde{\beta}\Vert_0\right)+Cv^2\Vert\Delta_m\overset{2}{\tilde{\beta}}_+\Vert_2+Cv\Vert\Delta_m r\Vert_0,
\end{aligned}
\end{equation}
\begin{equation}\label{eq6.273}
\begin{aligned}
\left|\Delta_m\frac{\partial^2 A}{\partial u\partial v}\right|\le C\left(\Vert\Delta_m\tilde{\alpha}\Vert_0+\Vert\Delta_m\tilde{\beta}\Vert_0\right)+Cv\left(\Vert\Delta_m\overset{1}{\tilde{\alpha}}_+\Vert_1+\Vert\Delta_m\overset{2}{\tilde{\beta}}_+\Vert_2+\Vert\Delta_m r\Vert_0\right),
\end{aligned}
\end{equation}
\begin{equation}\label{eq6.274}
\begin{aligned}
\left|\Delta_m\frac{\partial^2 A}{\partial v^2}\right|\le C\left(\Vert\Delta_m\tilde{\alpha}\Vert_0+\Vert\Delta_m\tilde{\beta}\Vert_0+\Vert\Delta_m\overset{2}{\tilde{\beta}}_+\Vert_2\right)+Cv^2\Vert\Delta_m\overset{1}{\tilde{\alpha}}_+\Vert_1+Cv\Vert\Delta_m r\Vert_0.
\end{aligned}
\end{equation}
Therefore, according to the previously calculated results of all second order derivatives of $\tilde{\alpha}_{m+1}(u,v)$ in \eqref{eq6.140} - \eqref{eq6.142}, we have

\begin{equation}\label{eq6.275}
\begin{aligned}
\left|\Delta_{m+1}\frac{\partial^2\tilde{\alpha}}{\partial u^2}\right|\le Cv\left(\Vert\Delta_m\tilde{\alpha}\Vert_0+\Vert\Delta_m\tilde{\beta}\Vert_0+\Vert\Delta_m\overset{2}{\tilde{\beta}}_+\Vert_2+\Vert\Delta_m\overset{1}{\tilde{\alpha}}_+\Vert_1+\Vert\Delta_m r\Vert_0\right),
\end{aligned}
\end{equation}
\begin{equation}\label{eq6.276}
\begin{aligned}
\left|\Delta_{m+1}\frac{\partial^2\tilde{\alpha}}{\partial u\partial v}\right|\le Cv\left(\Vert\Delta_m\tilde{\alpha}\Vert_0+\Vert\Delta_m\tilde{\beta}\Vert_0+\Vert\Delta_m\overset{1}{\tilde{\alpha}}_+\Vert_1+\Vert\Delta_m r\Vert_0\right)+Cv^2\Vert\Delta_m\overset{2}{\tilde{\beta}}_+\Vert_2,
\end{aligned}
\end{equation}
\begin{equation}\label{eq6.277}
\begin{aligned}
\left|\Delta_{m+1}\frac{\partial^2\tilde{\alpha}}{\partial v^2}\right|\le Cv\left(\Vert\Delta_m\tilde{\alpha}\Vert_0+\Vert\Delta_m\tilde{\beta}\Vert_0+\Vert\Delta_m\overset{2}{\tilde{\beta}}_+\Vert_2+\Vert\Delta_m r\Vert_0\right)+Cv^2\Vert\Delta_m\overset{1}{\tilde{\alpha}}_+\Vert_1.
\end{aligned}
\end{equation}
So in summary, by the definition in \eqref{eq6.38}, when $\varepsilon$ is small enough, we have

\begin{equation}\label{eq6.278}
\Vert\Delta_{m+1}\tilde{\alpha}\Vert_0\le C\varepsilon\left(\Vert\Delta_m\tilde{\alpha}\Vert_0+\Vert\Delta_m\tilde{\beta}\Vert_0+\Vert\Delta_m\overset{2}{\tilde{\beta}}_+\Vert_2+\Vert\Delta_m\overset{1}{\tilde{\alpha}}_+\Vert_1+\Vert\Delta_m r\Vert_0\right).
\end{equation}

Similarly, based on the calculation results of all second order derivatives of $\tilde{\beta}_{m+1}(u,v)$ in \eqref{eq6.157} - \eqref{eq6.159}, we can easily obtain

\begin{equation}\label{eq6.279}
\begin{aligned}
\left|\Delta_{m+1}\frac{\partial^2\tilde{\beta}}{\partial u^2}\right|\le & Cv\left(\Vert\Delta_m\tilde{\alpha}\Vert_0+\Vert\Delta_m\tilde{\beta}\Vert_0+\Vert\Delta_m\overset{1}{\tilde{\alpha}}_+\Vert_1+\Vert\Delta_m r\Vert_0\right)+Cv^2\Vert\Delta_m\overset{2}{\tilde{\beta}}_+\Vert_2,
\end{aligned}
\end{equation}
\begin{equation}\label{eq6.280}
\begin{aligned}
\left|\Delta_{m+1}\frac{\partial^2\tilde{\beta}}{\partial u\partial v}\right|\le & Cv\left(\Vert\Delta_m\tilde{\alpha}\Vert_0+\Vert\Delta_m\tilde{\beta}\Vert_0+\Vert\Delta_m\overset{2}{\tilde{\beta}}_+\Vert_2+\Vert\Delta_m r\Vert_0\right)+Cv^2\Vert\Delta_m\overset{1}{\tilde{\alpha}}_+\Vert_1,
\end{aligned}
\end{equation}
\begin{equation}\label{eq6.281}
\begin{aligned}
\left|\Delta_{m+1}\frac{\partial^2\tilde{\beta}}{\partial v^2}\right|\le & Cv\left(\Vert\Delta_m\tilde{\alpha}\Vert_0+\Vert\Delta_m\tilde{\beta}\Vert_0+\Vert\Delta_m\overset{2}{\tilde{\beta}}_+\Vert_2+\Vert\Delta_m\overset{1}{\tilde{\alpha}}_+\Vert_1+\Vert\Delta_m r\Vert_0\right).
\end{aligned}
\end{equation}
Therefore, when $\varepsilon$ is small enough, we have

\begin{equation}\label{eq6.282}
\begin{aligned}
\Vert\Delta_{m+1}\tilde{\beta}\Vert_0\le C\varepsilon\left(\Vert\Delta_m\tilde{\alpha}\Vert_0+\Vert\Delta_m\tilde{\beta}\Vert_0+\Vert\Delta_m\overset{2}{\tilde{\beta}}_+\Vert_2+\Vert\Delta_m\overset{1}{\tilde{\alpha}}_+\Vert_1+\Vert\Delta_m r\Vert_0\right).
\end{aligned}
\end{equation}

Finally, we only need to estimate $\Delta_{m+1}\overset{1}{\tilde{\alpha}}_+^{''}(u)$ and $\Delta_{m+1}\overset{2}{\tilde{\beta}}_+^{''}(v)$. For $\Delta_{m+1}\overset{1}{\tilde{\alpha}}_+^{''}(u)$, we have

\begin{equation}\label{eq6.283}
\begin{aligned}
\left|\Delta_{m+1}\overset{1}{\tilde{\alpha}}_+^{''}\right|\le & C\left(\left|\Delta_m\overset{1}{F}\right|+\left|\Delta_m\overset{1}{M}_1\right|+\left|\Delta_m\overset{1}{\alpha}_-^{''}\right|+\left|\Delta_m\overset{1}{M}_2\right|+\left|\Delta_m\overset{1}{\beta}_-^{''}\right|+\left|\Delta_m\overset{1}{G}\right|\right)\\
&+\left|\overset{1}{F}(\overset{1}{\beta}_{+,m-1},\overset{1}{\alpha}_{-,m-1},\overset{1}{\beta}_{-,m-1})\right|\left|\Delta_m\overset{1}{\beta}_+^{''}\right|\\
\le & Cv\left(\Vert\Delta_m\tilde{\alpha}\Vert_0+\Vert\Delta_m\overset{1}{\tilde{\alpha}}_+\Vert_1\right)+C\left(\Vert\Delta_m\tilde{\beta}\Vert_0+\Vert\Delta_m r\Vert_0\right)+(\overset{1}{F}_0+Cv)\Vert\Delta_m\overset{2}{\tilde{\beta}}_+\Vert_2.
\end{aligned}
\end{equation}
by using \eqref{eq6.163} to take the difference of $\overset{1}{\tilde{\alpha}}_{+,m+1}^{''}(u)$ and $\overset{1}{\tilde{\alpha}}_{+m}^{''}(u)$.

Similarly, for $\Delta_{m+1}\overset{2}{\tilde{\beta}}_+^{''}(v)$, by utilizing \eqref{eq6.164} to take the difference between $\overset{2}{\tilde{\beta}}_{m+1}^{''}(v)$ and $\overset{2}{\tilde{\beta}}_m^{''}(v)$, we can obtain

\begin{equation}\label{eq6.284}
\begin{aligned}
\left|\Delta_{m+1}\overset{2}{\tilde{\beta}}_+^{''}\right|\le & C\left(\left|\Delta_m\overset{2}{F}\right|+\left|\Delta_m\overset{2}{M}_1\right|+\left|\Delta_m\overset{2}{\alpha}_-^{''}\right|+\left|\Delta_m\overset{2}{M}_2\right|+\left|\Delta_m\overset{2}{\beta}_-^{''}\right|+\left|\Delta_m\overset{2}{G}\right|\right)\\
&+\left|\overset{2}{F}(\overset{2}{\alpha}_{+,m-1},\overset{2}{\alpha}_{-,m-1},\overset{2}{\beta}_{-,m-1})\right|\left|\Delta_m\overset{2}{\alpha}_+^{''}\right|\\
\le & Cv\left(\Vert\Delta_m\tilde{\beta}\Vert_0+\Vert\Delta_m\overset{2}{\tilde{\beta}}_+\Vert_2\right)+C\left(\Vert\Delta_m\tilde{\alpha}\Vert_0+\Vert\Delta_m r\Vert_0\right)+a^2(\overset{2}{F}_0+Cv)\Vert\Delta_m\overset{1}{\tilde{\alpha}}_+\Vert_1.
\end{aligned}
\end{equation}

According to the previous calculation, we already have $0<\overset{1}{F}_0,\overset{2}{F}_0<1$. So when $\varepsilon$ is small enough, we have

\begin{equation}\label{eq6.285}
\Vert\Delta_{m+1}\overset{1}{\tilde{\alpha}}_+\Vert_1\le C\varepsilon\left(\Vert\Delta_m\tilde{\alpha}\Vert_0+\Vert\Delta_m\overset{1}{\tilde{\alpha}}_+\Vert_1\right)+C\left(\Vert\Delta_m\tilde{\beta}\Vert_0+\Vert\Delta_m r\Vert_0\right)+k_2\Vert\Delta_m\overset{2}{\tilde{\beta}}_+\Vert_2,
\end{equation}
\begin{equation}\label{eq6.286}
\Vert\Delta_{m+1}\overset{2}{\tilde{\beta}}_+\Vert_2\le C\varepsilon\left(\Vert\Delta_m\tilde{\beta}\Vert_0+\Vert\Delta_m\overset{2}{\tilde{\beta}}_+\Vert_2\right)+C\left(\Vert\Delta_m\tilde{\alpha}\Vert_0+\Vert\Delta_m r\Vert_0\right)+k_2\Vert\Delta_m\overset{1}{\tilde{\alpha}}_+\Vert_1,
\end{equation}
where $0<k_2<1$. We take the maximum of $k_1$ and $k_2$, which is denoted as $k$. Thus, we have completed the estimates for all quantities. Finally, we will use the previous estimates to complete the proof of convergence.

We begin by defining parameters as follows:

\begin{equation*}
\begin{cases}
&a_m:=\Vert\Delta_m\tilde{\alpha}\Vert_0+\Vert\Delta_m\tilde{\beta}\Vert_0,\\
&b_m:=\Vert\Delta_m\overset{1}{\tilde{\alpha}}_+\Vert_1+\Vert\Delta_m\overset{2}{\tilde{\beta}}_+\Vert_2,\\
&c_m:=\Vert\Delta_m r\Vert_0,\\
&a:=C\varepsilon.
\end{cases}
\end{equation*}
We can use the parameters defined above to represent the estimates of the differences obtained earlier in matrix form as

\begin{equation}\label{eq6.287}
\begin{aligned}
\begin{pmatrix}
a_{m+1}\\
b_{m+1}\\
c_{m+1}\\
\end{pmatrix}\le \begin{pmatrix}
a & a & a\\
C & k & C\\
a & a & k\\
\end{pmatrix} \begin{pmatrix}
a_m\\
b_m\\
c_m\\
\end{pmatrix}.
\end{aligned}
\end{equation}
$k$ here is slightly larger than $k$ before, but it's still strictly less than $1$. Let the $3 \times 3$ coefficient matrix above be $M$. We know that $\varepsilon$ can be arbitrarily small, hence $a$ can also be small enough. And because the eigenvalues of a matrix are continuously dependent on the elements in the matrix, the eigenvalues of matrix $M$ are close to those of matrix 

\begin{equation*}
B=\begin{pmatrix}
0 & 0 & 0\\
C & k & C\\
0 & 0 & k\\
\end{pmatrix}.
\end{equation*}
By direct calculation, we know that the eigenvalues of the matrix $B$ are $0$, $k$, $k$. On the other hand, we can calculate the characteristic polynomial of the matrix $M$ as

\begin{equation*}
f_M(\lambda)=(\lambda-a)(\lambda-k)^2-a^2(\lambda-k+2C)-aC(2\lambda-a-k).
\end{equation*}
When $a$ is sufficiently small, we can obtain(We can assume that $C$ is greater than 1)

$$f_M(0)=a(ak+Ck-aC-k^2)>0,$$
$$f_M(k)=-aC(k+a)<0.$$
Therefore, if the three eigenvalues of the matrix $M$ are respectively $\lambda_1$, $\lambda_2$ and $\lambda_3$ from small to large, then $-1<\lambda_1<0<\lambda_2<k<\lambda_3<1$, that is, the matrix $M$ has three different eigenvalues. So $M$ can be diagonalized, that is, there is an invertible matrix $P$ such that

\begin{equation*}
M=P^{-1}\begin{pmatrix}
\lambda_1 & 0 & 0\\
0 & \lambda_2 & 0\\
0 & 0 & \lambda_3\\
\end{pmatrix}P.
\end{equation*}
If we set $x_m=\begin{pmatrix}
a_m\\
b_m\\
c_m\\
\end{pmatrix}$, then we have

\begin{equation*}
x_{m+1}\le P^{-1}\begin{pmatrix}
\lambda_1 & 0 & 0\\
0 & \lambda_2 & 0\\
0 & 0 & \lambda_3\\
\end{pmatrix}Px_m\le \cdot\cdot\cdot\le P^{-1}\begin{pmatrix}
\lambda_1^m & 0 & 0\\
0 & \lambda_2^m & 0\\
0 & 0 & \lambda_3^m\\
\end{pmatrix}Px_1.
\end{equation*}
Summing over the sequence $x_m$, we can obtain

\begin{equation*}
x_1+x_2+\cdot\cdot\cdot+x_{m+1}\le P^{-1}\begin{pmatrix}
\mathop{\sum}\limits_{k=0}^{m}\lambda_1^k & 0 & 0\\
0 & \mathop{\sum}\limits_{k=0}^{m}\lambda_2^k & 0\\
0 & 0 & \mathop{\sum}\limits_{k=0}^{m}\lambda_3^k\\
\end{pmatrix}Px_1.
\end{equation*}
Therefore,

\begin{equation}\label{eq6.288}
\mathop{\sum}\limits_{m=1}^{\infty}x_m\le P^{-1}\begin{pmatrix}
\frac{1}{1-\lambda_1} & 0 & 0\\
0 & \frac{1}{1-\lambda_2} & 0\\
0 & 0 & \frac{1}{1-\lambda_3}\\
\end{pmatrix}Px_1\le Mx_1.
\end{equation}
So for sufficiently small $\varepsilon$, the sequence $(\overset{1}{\tilde{\alpha}}_{+m},\overset{2}{\tilde{\beta}}_{+m},r_m,\tilde{\alpha}_m,\tilde{\beta}_m);m=0,1,2,\cdot\cdot\cdot$ is Cauchy columns in the function space $\overset{1}{B}_B\times \overset{2}{B}_B\times\overset{1}{B}_N\times\overset{2}{B}_N\times \overset{2}{B}_N$, and so converges. We assume that the above sequence converges to $(\overset{1}{\tilde{\alpha}}_+,\overset{2}{\tilde{\beta}}_+,r,\tilde{\alpha},\tilde{\beta})\in \overset{1}{B}_B\times \overset{2}{B}_B\times \overset{1}{B}_N\times \overset{2}{B}_N\times \overset{2}{B}_N$ in the corresponding function space under the corresponding norm.

\end{proof}

\subsection{Proof of the existence theorem}\label{6.5}

The two properties mentioned above imply that the sequence $((\overset{1}{\tilde{\alpha}}_{+m},\overset{2}{\tilde{\beta}}_{+m},r_m,\tilde{\alpha}_m,\tilde{\beta}_m);m=0,1,2,\cdot\cdot\cdot)$ uniformly converges to $(\overset{1}{\tilde{\alpha}}_+,\overset{2}{\tilde{\beta}}_+,r,\tilde{\alpha},\tilde{\beta})\in \overset{1}{B}_B\times \overset{2}{B}_B\times \overset{1}{B}_N\times \overset{2}{B}_N\times \overset{2}{B}_N$ within the range $[0,a\varepsilon]\times [0,\varepsilon]\times T_\varepsilon\times T_\varepsilon\times T_\varepsilon$. We denote this convergence as $\rightrightarrows$ in the corresponding function space. Taking $\overset{1}{\tilde{\alpha}}_{+m}(u)$ as an example, since both $\overset{1}{\tilde{\alpha}}_{+m}(u)$ and $\overset{1}{\tilde{\alpha}}_+(u)$ belong to the function space $\overset{1}{B}_B$, we can apply the definition of $\overset{1}{B}_B$ in \eqref{eq6.34} to establish the following relation:

\begin{equation*}
\begin{aligned}
\mathop{lim}\limits_{m\rightarrow \infty}\mathop{sup}\limits_{[0,a\varepsilon]}\left|\overset{1}{\tilde{\alpha}}_{+m}(u)-\overset{1}{\tilde{\alpha}}_+(u)\right|&\le C\varepsilon^2\mathop{lim}\limits_{m\rightarrow\infty}\mathop{sup}\limits_{[0,a\varepsilon]}\left|\overset{1}{\tilde{\alpha}}_{+m}^{''}(u)-\overset{1}{\tilde{\alpha}}_+^{''}(u)\right|=C\varepsilon^2\mathop{lim}\limits_{m\rightarrow\infty}\Vert\overset{1}{\tilde{\alpha}}_{+m}(u)-\overset{1}{\tilde{\alpha}}_+(u)\Vert_1=0.
\end{aligned}
\end{equation*}

Therefore, we have $\overset{1}{\tilde{\alpha}}_{+m}(u)\rightrightarrows \overset{1}{\tilde{\alpha}}_+(u)$. Similarly, we can also observe $\overset{2}{\tilde{\beta}}_{+m}(v)\rightrightarrows \overset{2}{\tilde{\beta}}_+(v)$, $\tilde{\alpha}_m(u,v)\rightrightarrows \tilde{\alpha}(u,v)$ and $\tilde{\beta}_m(u,v)\rightrightarrows \tilde{\beta}(u,v)$. Hence, we establish

\begin{equation}\label{eq6.289}
\alpha_m(u,v)=\tilde{\alpha}_m(u,v)+\overset{1}{\tilde{\alpha}}_{+m}(u)\rightrightarrows\tilde{\alpha}(u,v)+\overset{1}{\tilde{\alpha}}_+(u)=:\alpha(u,v),
\end{equation}
\begin{equation}\label{eq6.290}
\beta_m(u,v)=\tilde{\beta}_m(u,v)+\overset{2}{\tilde{\beta}}_{+m}(v)\rightrightarrows\tilde{\beta}(u,v)+\overset{2}{\tilde{\beta}}_+(v)=:\beta(u,v).
\end{equation}

Likewise, by applying the same method and utilizing the definitions of function spaces described in Section \ref{6.3}, we obtain 

\begin{equation}\label{eq6.291}
\frac{\partial\alpha_m}{\partial u}(u,v)=\frac{\partial\tilde{\alpha}_m}{\partial u}(u,v)+\overset{1}{\tilde{\alpha}}_{+m}^{'}(u)\rightrightarrows\frac{\partial\tilde{\alpha}}{\partial u}(u,v)+\overset{1}{\tilde{\alpha}}_+^{'}(u)=\frac{\partial\alpha}{\partial u}(u,v),
\end{equation}
\begin{equation}\label{eq6.292}
\frac{\partial\alpha_m}{\partial v}(u,v)=\frac{\partial\tilde{\alpha}_m}{\partial v}(u,v)\rightrightarrows\frac{\partial\tilde{\alpha}}{\partial v}(u,v)=\frac{\partial\alpha}{\partial v}(u,v),
\end{equation}
\begin{equation}\label{eq6.293}
\frac{\partial\beta_m}{\partial u}(u,v)=\frac{\partial\tilde{\beta}_m}{\partial u}(u,v)\rightrightarrows\frac{\partial\tilde{\beta}}{\partial u}(u,v)=\frac{\partial\beta}{\partial u}(u,v),
\end{equation}
\begin{equation}\label{eq6.294}
\frac{\partial\beta_m}{\partial v}(u,v)=\frac{\partial\tilde{\beta}_m}{\partial v}(u,v)+\overset{2}{\tilde{\beta}}_{+m}^{'}(v)\rightrightarrows \frac{\partial\tilde{\beta}}{\partial v}(u,v)+\overset{2}{\tilde{\beta}}_+^{'}(v)=\frac{\partial\beta}{\partial v}(u,v).
\end{equation}
Similarly, for $r_m(u,v)$, we can observe the following convergence:

\begin{equation}\label{eq6.295}
r_m(u,v)\rightrightarrows r(u,v),\ \ \ \frac{\partial r_m}{\partial u}(u,v)\rightrightarrows \frac{\partial r}{\partial u}(u,v),\ \ \ \frac{\partial r_m}{\partial v}(u,v)\rightrightarrows \frac{\partial r}{\partial v}(u,v).
\end{equation}
Clearly, all second order derivatives of the aforementioned iterative sequence uniformly converge.

As $c_{in}$ and $c_{out}$ are smooth functions of $\alpha$ and $\beta$, we establish 

\begin{equation}\label{eq6.296}
c_{in,m}(u,v)=c_{in}(\alpha_m(u,v),\beta_m(u,v))\rightrightarrows c_{in}(\alpha(u,v),\beta(u,v))=:c_{in}(u,v).
\end{equation}
In a similar manner, we have

\begin{equation}\label{eq6.297}
\frac{\partial c_{in,m}}{\partial u}(u,v)\rightrightarrows \frac{\partial c_{in}}{\partial\alpha}(\alpha(u,v),\beta(u,v))\frac{\partial\alpha}{\partial u}(u,v)+\frac{\partial c_{in}}{\partial\beta}(\alpha(u,v),\beta(u,v))\frac{\partial\beta}{\partial u}(u,v)=\frac{\partial c_{in}}{\partial u}(u,v),
\end{equation}
\begin{equation}\label{eq6.298}
\frac{\partial c_{in,m}}{\partial v}(u,v)\rightrightarrows \frac{\partial c_{in}}{\partial\alpha}(\alpha(u,v),\beta(u,v))\frac{\partial\alpha}{\partial v}(u,v)+\frac{\partial c_{in}}{\partial\beta}(\alpha(u,v),\beta(u,v))\frac{\partial\beta}{\partial v}(u,v)=\frac{\partial c_{in}}{\partial v}(u,v).
\end{equation}

When replacing $c_{in}$ with $c_{out}$, similar estimations can be obtained, resulting in the convergence of $\mu_m(u,v)$ and $\nu_m(u,v)$. As defined in \eqref{eq6.23}, we deduce

\begin{equation}\label{eq6.299}
\mu_m(u,v)=\frac{1}{c_{out,m}-c_{in,m}}\frac{c_{out,m}}{c_{in,m}}\frac{\partial c_{in,m}}{\partial v}\rightrightarrows \frac{1}{c_{out}-c_{in}}\frac{c_{out}}{c_{in}}\frac{\partial c_{in}}{\partial v}=:\mu(u,v),
\end{equation}
\begin{equation}\label{eq6.300}
\nu_m(u,v)=-\frac{1}{c_{out,m}-c_{in,m}}\frac{c_{in,m}}{c_{out,m}}\frac{\partial c_{out,m}}{\partial u}\rightrightarrows -\frac{1}{c_{out}-c_{in}}\frac{c_{in}}{c_{out}}\frac{\partial c_{out}}{\partial u}=:\nu(u,v).
\end{equation}
Hence, based on the definition of $M_m(u,v)$ in \eqref{eq6.21}, we obtain 

\begin{equation}\label{eq6.301}
M_m(u,v)=\mu_m\frac{\partial r_m}{\partial u}+\nu_m\frac{\partial r_m}{\partial v}\rightrightarrows \mu\frac{\partial r}{\partial u}+\nu\frac{\partial r}{\partial v}=:M(u,v).
\end{equation}
In addition, by taking the limit of equation \eqref{eq6.135}, we yield 

\begin{equation}\label{eq6.302}
M(u,v)=\frac{\partial^2 r}{\partial u\partial v}(u,v).
\end{equation}

Considering the definitions of $\phi_m(u,v)$ and $\psi_m(u,v)$ in \eqref{eq6.5}, we deduce

\begin{equation}\label{eq6.303}
\phi_m(u,v)=\frac{1}{c_{in,m}}\frac{\partial r_m}{\partial u}\rightrightarrows \frac{1}{c_{in}}\frac{\partial r}{\partial u}=:\phi(u,v),\ \ \ \psi_m(u,v)=\frac{1}{c_{out,m}}\frac{\partial r_m}{\partial v}\rightrightarrows \frac{1}{c_{out}}\frac{\partial r}{\partial v}=:\psi(u,v).
\end{equation}
Using the same method as before, it is easy to know that all first order derivatives of $\phi_m(u,v)$ and $\psi_m(u,v)$ also converge uniformly to the derivatives of their limit functions. Thus, by \eqref{eq6.4}, we establish the convergence of $t_m(u,v)$ and all of its first derivatives:

\begin{equation}\label{eq6.304}
\begin{aligned}
t_m(u,v)=&\int_0^u(\phi_m+\psi_m)(u^{'},u^{'})du^{'}+\int_u^v\psi_m(u,v^{'})dv^{'}\\\rightrightarrows& \int_0^u(\phi+\psi)(u^{'},u^{'})du^{'}+\int_u^v\psi(u,v^{'})dv^{'}=:t(u,v),
\end{aligned}
\end{equation}
\begin{equation}\label{eq6.305}
\frac{\partial t_m}{\partial v}(u,v)=\psi_m(u,v)\rightrightarrows \psi(u,v)=\frac{\partial t}{\partial v}(u,v),
\end{equation}
\begin{equation}\label{eq6.306}
\frac{\partial t_m}{\partial u}(u,v)=\phi_m(u,u)+\int_u^v\frac{\partial\psi_m}{\partial u}(u,v^{'})dv^{'}\rightrightarrows \phi(u,u)+\int_u^v\frac{\partial\psi}{\partial u}(u,v^{'})dv^{'}=\frac{\partial t}{\partial u}(u,v).
\end{equation}
Taking the derivative of $\phi(u,v)$ with respect to $v$ and $\psi(u,v)$ with respect to $u$, based on the definition above, we obtain $\frac{\partial\phi}{\partial v}=-\frac{1}{c_{in}^2}\frac{\partial c_{in}}{\partial v}\frac{\partial r}{\partial u}+\frac{1}{c_{in}}\frac{\partial^2 r}{\partial u\partial v}$, $\frac{\partial\psi}{\partial u}=-\frac{1}{c_{out}^2}\frac{\partial c_{out}}{\partial u}\frac{\partial r}{\partial v}+\frac{1}{c_{out}}\frac{\partial^2 r}{\partial u\partial v}$. Furthermore, combining the properties of the obtained limit function $M(u,v)$, expressed as

\begin{equation}\label{eq6.307}
M=\frac{\partial^2 r}{\partial u\partial v}=\mu\frac{\partial r}{\partial u}+\nu\frac{\partial r}{\partial v},
\end{equation}
we deduce $\frac{\partial\phi}{\partial v}-\frac{\partial\psi}{\partial u}=0$. Hence, by\eqref{eq6.306},\eqref{eq6.303} and \eqref{eq6.305}, we establish

\begin{equation}\label{eq6.308}
\frac{\partial t}{\partial u}(u,v)=\phi(u,v)=\frac{1}{c_{in}(u,v)}\frac{\partial r}{\partial u}(u,v),
\end{equation}
\begin{equation}\label{eq6.309}
\frac{\partial t}{\partial v}(u,v)=\psi(u,v)=\frac{1}{c_{out}(u,v)}\frac{\partial r}{\partial v}(u,v).
\end{equation}
Consequently, we have demonstrated that the limit functions comply with the characteristic equations \eqref{eq5.56} and \eqref{eq5.57}.

Subsequently, our aim is to prove that the limit functions also satisfy the remaining two differential equations, \eqref{eq5.58} and \eqref{eq5.59}. Considering the definition of $A_m(u,v)$ in \eqref{eq6.29}, we naturally obtain 

\begin{equation}\label{eq6.310}
A_m(u,v)=A(\alpha_m(u,v),\beta_m(u,v))\rightrightarrows A(\alpha(u,v),\beta(u,v))=:A(u,v).
\end{equation}
Accordingly, based on the definitions of $\alpha_{m+1}(u,v)$ and $\beta_{m+1}(u,v)$ presented in equations \eqref{eq6.27} and \eqref{eq6.28}, we deduce 

\begin{equation}\label{eq6.311}
\frac{\partial\alpha_{m+1}}{\partial v}(u,v)\rightrightarrows A(u,v)\frac{\partial t}{\partial v}(u,v)=\frac{\partial\alpha}{\partial v}(u,v),
\end{equation}
\begin{equation}\label{eq6.312}
\frac{\partial\beta_{m+1}}{\partial u}(u,v)\rightrightarrows A(u,v)\frac{\partial t}{\partial u}(u,v)=\frac{\partial\beta}{\partial u}(u,v).
\end{equation}
In the above derivation, we also used \eqref{eq6.292} and \eqref{eq6.293}.

Subsequently, our task is to demonstrate that the obtained limit functions satisfy the boundary conditions, jump conditions, and determinism condition outlined in Section \ref{5.5}. By taking the limit of the iterative process, it becomes apparent that the remaining conditions are trivial, except for equations \eqref{eq5.60}, \eqref{eq5.61}, \eqref{eq5.63}, \eqref{eq5.66}, and \eqref{eq5.67}. Below we first prove the limit functions of $\overset{i}{\Gamma}_m$, which are $\overset{1}{\Gamma}(u)=\frac{\overset{1}{c_{out}}_+}{\overset{1}{c_{in}}_+}\frac{\overset{1}{V}-\overset{1}{c_{in}}_+}{\overset{1}{c_{out}}_+-\overset{1}{V}}$, $\overset{2}{\Gamma}(v)=a\frac{\overset{2}{c_{out}}_+}{\overset{2}{c_{in}}_+}\frac{\overset{2}{V}-\overset{2}{c_{in}}_+}{\overset{2}{c_{out}}_+-\overset{2}{V}}$, satisfy

\begin{equation*}
\frac{\partial r}{\partial v}(u,u)=\frac{\partial r}{\partial u}(u,u)\overset{1}{\Gamma}(u),\ \ \ \frac{\partial r}{\partial v}(av,v)=\frac{\partial r}{\partial u}(av,v)\overset{2}{\Gamma}(v).
\end{equation*}
Given that we have already established the uniform convergence of $\alpha_m(u,v)$ and $\beta_m(u,v)$, we can deduce that $\overset{i}{\alpha}_{+m}$ and $\overset{i}{\beta}_{+m}$ $(i=1,2)$, as defined in \eqref{eq6.10} and \eqref{eq6.11}, also uniformly converge. Similarly, we can prove that $\overset{i}{t}_{+m}$ and $\overset{i}{r}_{+m}$ uniformly converge. Consequently, based on the definitions of $\overset{i}{\alpha}_{-m}$ and $\overset{i}{\beta}_{-m}$ in \eqref{eq6.7} and \eqref{eq6.8}, it is evident that they also converge uniformly. As a result, by \eqref{eq6.13} and \eqref{eq6.14}, we observe that $\overset{i}{\rho}_{\pm m}$ and $\overset{i}{w}_{\pm m}$ converge uniformly. Furthermore, \eqref{eq6.12} implies that $\overset{i}{V}_m$ also uniformly converges, denoted as $\overset{i}{V}_m \rightrightarrows \overset{i}{V}$. Considering the definitions of $\overset{i}{c_{in}}_{+,m}(u)$ and $\overset{i}{c_{out}}_{+,m}(u)$ in \eqref{eq6.15}, we can deduce their uniform convergence as well. Therefore, $\overset{i}{\Gamma}_m$ also uniformly converges, denoted as $\overset{i}{\Gamma}_m \rightrightarrows \overset{i}{\Gamma}$, where $\overset{i}{\Gamma}$ is defined as mentioned above. Lastly, based on the definition of $\overset{i}{\gamma}_m$ in \eqref{eq6.22}, we conclude that

\begin{equation}\label{eq6.313}
\overset{1}{\gamma}_m(u)\rightrightarrows \frac{\overset{2}{\Gamma}(u)}{\overset{1}{\Gamma}(u)}=:\overset{1}{\gamma}(u),\ \ \ \overset{2}{\gamma}_m(v)\rightrightarrows \frac{\overset{2}{\Gamma}(v)}{\overset{1}{\Gamma}(av)}=:\overset{2}{\gamma}(v).
\end{equation}

By utilizing the definitions of $\overset{1}{\Phi}_m(u)$, $\overset{1}{\Lambda}_m(u)$, and $M_m(u,v)$ in \eqref{eq6.17}, \eqref{eq6.19}, and \eqref{eq6.21}, and incorporating the properties of the earlier obtained limit function $M(u,v)$ in \eqref{eq6.307}, we establish

\begin{equation}\label{eq6.314}
\begin{aligned}
\overset{1}{\Phi}_m(u)=&\overset{1}{\gamma}_m(u)\frac{\partial r_m}{\partial u}(au,u)-\frac{1}{\Gamma_0}+\frac{1}{\overset{1}{\Gamma}_m(u)}\int_{au}^u M_m(u^{'},u)du^{'}\\
\rightrightarrows &\overset{1}{\gamma}(u)\frac{\partial r}{\partial u}(au,u)-\frac{1}{\Gamma_0}+\frac{1}{\overset{1}{\Gamma}(u)}\left(\frac{\partial r}{\partial v}(u,u)-\frac{\partial r}{\partial v}(au,u)\right).
\end{aligned}
\end{equation}

To obtain 

\begin{equation*}
\frac{\partial r_{m+1}}{\partial u}(u,v)=\frac{1}{\Gamma_0}+\overset{1}{\Phi}_m(u)+\int_u^v M_m(u,v^{'})dv^{'},
\end{equation*}
we differentiate the expression of $r_{m+1}(u,v)$ given in \eqref{eq6.16} with respect to $u$. Upon taking the limit of the aforementioned equation, we obtain the limit function $r(u,v)$ that satisfies

\begin{equation*}
\frac{\partial r}{\partial u}(u,v)=\overset{1}{\gamma}(u)\frac{\partial r}{\partial u}(au,u)+\frac{1}{\overset{1}{\Gamma}(u)}\left(\frac{\partial r}{\partial v}(u,u)-\frac{\partial r}{\partial v}(au,u)\right)+\left(\frac{\partial r}{\partial u}(u,v)-\frac{\partial r}{\partial u}(u,u)\right),
\end{equation*}
i.e.,

\begin{equation}\label{eq6.315}
\overset{1}{\Gamma}(u)\frac{\partial r}{\partial u}(u,u)+\frac{\partial r}{\partial v}(au,u)=\overset{2}{\Gamma}(u)\frac{\partial r}{\partial u}(au,u)+\frac{\partial r}{\partial v}(u,u).
\end{equation}

Similarly, by applying the definitions of $\overset{2}{\Phi}_m(v)$, $\overset{2}{\Lambda}_m(v)$, and $M_m(u,v)$ provided in \eqref{eq6.18}, \eqref{eq6.20}, and \eqref{eq6.21}, and considering the properties of the previously derived limit function $M(u,v)$ in \eqref{eq6.307}, we deduce

\begin{equation}\label{eq6.316}
\begin{aligned}
\overset{2}{\Phi}_m(v)=&\overset{2}{\gamma}_m(v)\frac{\partial r_m}{\partial v}(av,av)-1+\overset{2}{\Gamma}_m(v)\int_{av}^v M_m(av,v^{'})dv^{'}\\
\rightrightarrows &\overset{2}{\gamma}(v)\frac{\partial r}{\partial v}(av,av)-1+\overset{2}{\Gamma}(v)\left(\frac{\partial r}{\partial u}(av,v)-\frac{\partial r}{\partial u}(av,av)\right).
\end{aligned}
\end{equation}
Taking the partial derivative of $r_{m+1}(u,v)$ with respect to $v$, as presented in \eqref{eq6.16}, yields:

\begin{equation*}
\frac{\partial r_{m+1}}{\partial v}(u,v)=1+\overset{2}{\Phi}_m(v)+\int_{av}^u M_m(u^{'},v)du^{'}.
\end{equation*}
Upon taking the limit of the above equation, we obtain

\begin{equation*}
\frac{\partial r}{\partial v}(u,v)=\overset{2}{\gamma}(v)\frac{\partial r}{\partial v}(av,av)+\overset{2}{\Gamma}(v)\left(\frac{\partial r}{\partial u}(av,v)-\frac{\partial r}{\partial u}(av,av)\right)+\left(\frac{\partial r}{\partial v}(u,v)-\frac{\partial r}{\partial v}(av,v)\right),
\end{equation*}
i.e.,

\begin{equation}\label{eq6.317}
\overset{1}{\Gamma}(av)\frac{\partial r}{\partial v}(av,v)+\overset{1}{\Gamma}(av)\overset{2}{\Gamma}(v)\frac{\partial r}{\partial u}(av,av)=\overset{2}{\Gamma}(v)\frac{\partial r}{\partial v}(av,av)+\overset{1}{\Gamma}(av)\overset{2}{\Gamma}(v)\frac{\partial r}{\partial u}(av,v).
\end{equation}

We define $f(u)$ and $g(v)$ as

\begin{equation}\label{eq6.318}
f(u):=\frac{\partial r}{\partial u}(u,u)\overset{1}{\Gamma}(u)-\frac{\partial r}{\partial v}(u,u),\ \ \ g(v):=\frac{\partial r}{\partial u}(av,v)\overset{2}{\Gamma}(v)-\frac{\partial r}{\partial v}(av,v).
\end{equation}
It is evident that $f$ is a $C^1$ function defined on the domain, and it satisfies $f(0)=0$. $g$ obviously has the same properties. Equation \eqref{eq6.315} is equivalent to $f(u)=g(u)$. Therefore, \eqref{eq6.317} simplifies to $\overset{1}{\Gamma}(av)f(v)=\overset{2}{\Gamma}(v)f(av)$, which can be expressed as $f(av)=\frac{1}{\overset{2}{\gamma}(v)}f(v)$.

Considering $\frac{1}{\overset{2}{\gamma}(v)}=1+\mathcal{O}(v)$, for sufficiently small $\varepsilon$, there exists a constant $C$ such that

\begin{equation*}
\left|f(av)-f(v)\right|\le Cv\left|f(v)\right|,
\end{equation*}
\begin{equation*}
\left|f(av)\right|\le \left(1+Cv\right)\left|f(v)\right|.
\end{equation*}
Hence, we obtain

\begin{equation*}
\left|f(a^m v)\right|\le \left(1+Ca^{m-1}v\right)\left|f(a^{m-1}v)\right|\le \cdot\cdot\cdot\le (1+Ca^{m-1}v)\cdot\cdot\cdot (1+Cv)\left|f(v)\right|.
\end{equation*}
Introducing the definition: $h_m=(1+Ca^{m-1}v)\cdot\cdot\cdot (1+Cv)$, we can derive

\begin{equation*}
\begin{aligned}
log h_m=&log(1+Cv)+log(1+Cav)+\cdot\cdot\cdot+log(1+Ca^{m-1}v)\\
\le & Cv+Cav+\cdot\cdot\cdot+Ca^{m-1}v\\
=&Cv(1+a+\cdot\cdot\cdot+a^{m-1})\\
\le & \frac{C\varepsilon}{1-a}.
\end{aligned}
\end{equation*}
Consequently, $h_m$ is uniformly bounded, and there exists a constant $M$ such that $\left| h_m \right|\le M$. Thus,

\begin{equation*}
\begin{aligned}
\left|f(v)\right|\le & \left|f(v)-f(av)\right|+\left|f(av)-f(a^2 v)\right|+\cdot\cdot\cdot+\left|f(a^m v)-f(a^{m+1}v)\right|+\left|f(a^{m+1}v)\right|\\
\le & Cv\left|f(v)\right|+Cav\left|f(av)\right|+\cdot\cdot\cdot+Ca^m v\left|f(a^m v)\right|+\left|f(a^{m+1}v)\right|\\
\le & CMv(1+a+\cdot\cdot\cdot+a^m)\left| f(v) \right|+\left|f(a^{m+1}v)\right|\\.
\end{aligned}
\end{equation*}
By taking the limit on both sides of the above equation with respect to $m$, we obtain $\left|f(v)\right|\le \frac{CM}{1-a}v\left|f(v)\right|$. For sufficiently small $\varepsilon$, the coefficient on the right hand side satisfies $\frac{CM}{1 − a}v<1$. Thus, when $\varepsilon$ is sufficiently small, we have $f(v)\equiv 0$, which implies

\begin{equation}\label{eq6.319}
f(v)=g(v)\equiv 0.
\end{equation}

In the following, we prove \eqref{eq5.60} and \eqref{eq5.61}. By \eqref{eq6.111} and \eqref{eq6.112}, we derive the following expression for $\overset{i}{V}$:

\begin{equation*}
\overset{1}{V}(u)=\overset{1}{c_{in}}_+\overset{1}{c_{out}}_+\frac{1+\overset{1}{\Gamma}(u)}{\overset{1}{c_{in}}_+\overset{1}{\Gamma}(u)+\overset{1}{c_{out}}_+},\ \ \ \overset{2}{V}(v)=\overset{2}{c_{in}}_+\overset{2}{c_{out}}_+\frac{a+\overset{2}{\Gamma}(v)}{\overset{2}{c_{in}}_+\overset{2}{\Gamma}(v)+a\overset{2}{c_{out}}_+}.
\end{equation*}
Employing the characteristic equations \eqref{eq6.308} and \eqref{eq6.309} and the above equation \eqref{eq6.319}, we derive

\begin{equation}\label{eq6.320}
\begin{aligned}
\frac{{d\overset{1}{r}_+}/{du}(u)}{{d\overset{1}{t}_+}/{du}(u)}=\frac{\overset{1}{c_{in}}_+\overset{1}{c_{out}}_+}{\overset{1}{c_{in}}_+\overset{1}{\Gamma}+\overset{1}{c_{out}}_+}+\frac{\overset{1}{\Gamma}\overset{1}{c_{in}}_+\overset{1}{c_{out}}_+}{\overset{1}{c_{in}}_+\overset{1}{\Gamma}+\overset{1}{c_{out}}_+}= \overset{1}{V}(u),
\end{aligned}
\end{equation}
\begin{equation}\label{eq6.321}
\begin{aligned}
\frac{{d\overset{2}{r}_+}/ {dv}(v)}{{d\overset{2}{t}_+}/{dv}(v)}=\frac{a\overset{2}{c_{in}}_+ \overset{2}{c_{out}}_+}{a\overset{2}{c_{out}}_+ +\overset{2}{\Gamma}\overset{2}{c_{in}}_+}+\frac{\overset{2}{\Gamma}\overset{2}{c_{in}}_+ \overset{2}{c_{out}}_+}{a\overset{2}{c_{out}}_+ +\overset{2}{\Gamma}\overset{2}{c_{in}}_+}=\overset{2}{V}(v).
\end{aligned}
\end{equation}
This completes the proof of \eqref{eq5.60} and \eqref{eq5.61}.

To prove \eqref{eq5.63}, we only need to show that

\begin{equation*}
\overset{1}{\alpha}_+(u)=\overset{1}{H}\left(\overset{1}{\beta}_+(u),\overset{1}{\alpha}_-(u),\overset{1}{\beta}_-(u)\right),\ \ \ \overset{2}{\beta}_+(v)=\overset{2}{H}\left(\overset{2}{\alpha}_+(v),\overset{2}{\alpha}_-(v),\overset{2}{\beta}_-(v)\right),
\end{equation*}
where we used

\begin{equation*}
\frac{\partial J}{\partial\alpha_+}(\overset{1}{\alpha}_+(0),\overset{1}{\beta}_+(0),\overset{1}{\alpha}_-(0),\overset{1}{\beta}_-(0))=\frac{\partial J}{\partial\alpha_+}(\beta_0,\beta_0,\overset{1}{\alpha}_{-0},\overset{1}{\beta}_{-0})\neq 0,
\end{equation*}
\begin{equation*}
\frac{\partial J}{\partial\beta_+}(\overset{2}{\alpha}_+(0),\overset{2}{\beta}_+(0),\overset{2}{\alpha}_-(0),\overset{2}{\beta}_-(0))=\frac{\partial J}{\partial\beta_+}(\beta_0,\beta_0,\overset{2}{\alpha}_{-0},\overset{2}{\beta}_{-0})\neq 0.
\end{equation*}
By taking the limit of \eqref{eq6.27} at $u=v$ and \eqref{eq6.28} at $u=av$, we obtain

\begin{equation}\label{eq6.322}
\overset{1}{\alpha}_+(u)=\overset{1}{\tilde{\alpha}}_+(u),\ \ \ \overset{2}{\beta}_+(v)=\overset{2}{\tilde{\beta}}_+(v).
\end{equation}
Subsequently, we can take the limit of \eqref{eq6.25} and \eqref{eq6.26} to obtain the desired result.

Finally, we need to demonstrate that the limit functions satisfy \eqref{eq5.66} and \eqref{eq5.67}. We recall that the determinism condition is satisfied at the interaction point. By \eqref{eq4.4} and \eqref{eq4.6}, we have

\begin{equation*}
\overset{1}{c_{in}}_+(0)=-\eta_0<\overset{1}{V}_0=\overset{1}{V}(0),\ \ \ \overset{2}{V}(0)=\overset{2}{V}_0<\eta_0=\overset{2}{c_{out}}_+(0),
\end{equation*}
and

\begin{equation*}
\overset{1}{V}(0)=\overset{1}{V}_0<(\overset{1}{c_{in}}_0^*)_0=\overset{1}{c_{in}}_-(0),\ \ \ \overset{2}{c_{out}}_-(0)=(\overset{2}{c_{out}}_0^*)_0<\overset{2}{V}_0=\overset{2}{V}(0).
\end{equation*}
According to the continuity of the limit functions, it can be inferred that \eqref{eq5.66} and \eqref{eq5.67} hold when $\varepsilon$ is small enough. Thus, we have proved that the obtained limit function $(\alpha,\beta,t,r)$ is a twice continuously differentiable solution to the shock interaction problem mentioned in Section \ref{5.5}. By taking the limit of \eqref{eq6.94}, \eqref{eq6.95}, \eqref{eq6.84} and \eqref{eq6.85}, we can directly compute the determinant of the Jacobian matrix $\frac{\partial(t,r)}{\partial(u,v)}$, as shown below:

\begin{equation}\label{eq6.323}
\begin{aligned}
\begin{vmatrix}
\frac{\partial t}{\partial u} & \frac{\partial t}{\partial v}\\
\frac{\partial r}{\partial u} & \frac{\partial r}{\partial v}
\end{vmatrix}=\left(-\frac{1}{\eta_0\Gamma_0}+\mathcal{O}(v)\right)\left(1+\mathcal{O}(v)\right)-\left(\frac{1}{\eta_0}+\mathcal{O}(v)\right)\left(\frac{1}{\Gamma_0}+\mathcal{O}(v)\right)=-\frac{2}{\eta_0\Gamma_0}+\mathcal{O}(v).
\end{aligned}
\end{equation}

We observe that for small enough $\varepsilon$, the Jacobian matrix remains non-degenerate. This allows us to choose a sufficiently small $\varepsilon$ such that $\alpha$ and $\beta$ are twice differentiable functions of $(t,r)$ on the image of $T_\varepsilon$ by the map $(u,v)\longmapsto (t(u,v),r(u,v))$.  Consequently, we have proven Theorem 1.

\subsection{Asymptotic form}\label{6.6}

Using the initial values calculated earlier, we can expand the solution $(\alpha,\beta,t,r)$ of the shock wave interaction problem at the interaction point in the state behind. Any twice differentiable solution that satisfies this problem can be represented in the following asymptotic form:

\begin{flalign*}
\begin{cases}
\alpha(u,v)&=\beta_0+\alpha_0^{'}u+\mathcal{O}(v^2),\\ 
\beta(u,v)&=\beta_0+\beta_0^{'}v+\mathcal{O}(v^2),\\ 
t(u,v)&=\frac{1}{\eta_0}\left(v-\frac{u}{\Gamma_0}\right)+\mathcal{O}(v^2),\\ 
r(u,v)&=r_0+\frac{u}{\Gamma_0}+v+\mathcal{O}(v^2). 
\end{cases}
\end{flalign*}

\subsection{Uniqueness}\label{6.7}

\begin{proof}

Next we prove Theorem \ref{uniqueness}:

Firstly, if we replace all subscripts $m$ and $m+1$ in the iteration scheme in Section \ref{6.1} with $k$, where $k=1,2$, then all equations still hold. At this stage, only the functions $\alpha_k(u,v)$, $\beta_k(u,v)$, $t_k(u,v)$ and $r_k(u,v)$ can be known. We define

\begin{equation}\label{eq6.324}
\phi_k(u,v):=\frac{1}{c_{in,k}(u,v)}\frac{\partial r_k}{\partial u}(u,v),\ \ \ \psi_k(u,v):=\frac{1}{c_{out,k}(u,v)}\frac{\partial r_k}{\partial v}(u,v),
\end{equation}
where

\begin{equation}\label{eq6.325}
c_{in,k}(u,v):=c_{in}(\alpha_k(u,v),\beta_k(u,v)),\ \ \ c_{out,k}(u,v):=c_{out}(\alpha_k(u,v),\beta_k(u,v)).
\end{equation}
Since $(\alpha_k,\beta_k,t_k,r_k)$ is a solution to the shock wave interaction problem, it satisfies the characteristic equations given by

\begin{equation}\label{eq6.326}
\frac{1}{c_{in,k}(u,v)}\frac{\partial r_k}{\partial u}(u,v)=\frac{\partial t_k}{\partial u}(u,v),\ \ \ \frac{1}{c_{out,k}(u,v)}\frac{\partial r_k}{\partial v}(u,v)=\frac{\partial t_k}{\partial v}(u,v).
\end{equation}
Thus, at this point, we have

\begin{equation}\label{eq6.327}
\phi_k(u,v)=\frac{\partial t_k}{\partial u}(u,v),\ \ \ \psi_k(u,v)=\frac{\partial t_k}{\partial v}(u,v).
\end{equation}

A direct calculation shows that \eqref{eq6.4}-\eqref{eq6.6} are satisfied after replacing the subscripts. To define $\overset{i}{\alpha}_{\pm,k}$, $\overset{i}{\beta}_{\pm,k}$, $\overset{i}{t}_{+,k}$, $\overset{i}{r}_{+,k}$, $\overset{i}{\rho}_{\pm,k}$, $\overset{i}{w}_{\pm,k}$, $\overset{i}{V}_k$, $\overset{i}{c_{in}}_{+,k}$ and $\overset{i}{c_{out}}_{+,k}$ in \eqref{eq6.7}-\eqref{eq6.15}, we replace all the subscripts $m$ with $k$. Following this, we can obtain the definition of $c_{in,k}$, $c_{out,k}$, $\mu_k$, $\nu_k$, $M_k$, $\overset{i}{\gamma}_k$, $\overset{i}{\Gamma}_k$， $\overset{i}{\Lambda}_k$ and $\overset{i}{\Phi}_k$, as shown in \eqref{eq6.17}-\eqref{eq6.24}.

Next, we begin our proof by showing that

\begin{equation}\label{eq6.328}
\frac{\partial r_k}{\partial v}(u,u)=\frac{\partial r_k}{\partial u}(u,u)\overset{1}{\Gamma}_k(u),\ \ \ \frac{\partial r_k}{\partial v}(av,v)=\frac{\partial r_k}{\partial u}(av,v)\overset{2}{\Gamma}_k(v).
\end{equation}
To establish this, we prove the following equation:

\begin{equation}\label{eq6.329}
M_k(u,v)=\mu_k(u,v)\frac{\partial r_k}{\partial u}(u,v)+\nu_k(u,v)\frac{\partial r_k}{\partial v}(u,v)=\frac{\partial^2 r_k}{\partial u\partial v}(u,v).
\end{equation}
To derive this equation, we simply take the derivative of the first equation in \eqref{eq6.326} with respect to $v$ and the derivative of the second equation in \eqref{eq6.326} with respect to $u$, and eliminate $t_k$ to obtain the desired result.

 Since $\overset{i}{V}_k$, $\overset{i}{t}_k$, and $\overset{i}{r}_k$ must satisfy the boundary conditions in \eqref{eq5.60} and \eqref{eq5.61}, as well as \eqref{eq6.326}, by a equation similar to \eqref{eq5.9}, we have

\begin{equation*}
\begin{aligned}
\overset{1}{\Gamma}_k(u)=\frac{\overset{1}{c_{out}}_{+,k}(u)}{\overset{1}{c_{in}}_{+,k}(u)}\frac{\frac{{d\overset{1}{r}_{+,k}}/{du}}{{d\overset{1}{t}_{+,k}}/{du}}(u)-\overset{1}{c_{in}}_{+,k}(u)}{\overset{1}{c_{out}}_{+,k}(u)-\frac{{d\overset{1}{r}_{+,k}}/{du}}{{d\overset{1}{t}_{+,k}}/{du}}(u)}=\frac{\overset{1}{c_{out}}_{+,k}(u)}{\overset{1}{c_{in}}_{+,k}(u)}\frac{\frac{\partial t_k}{\partial v}(u,u)}{\frac{\partial t_k}{\partial u}(u,u)}=\frac{\frac{\partial r_k}{\partial v}(u,u)}{\frac{\partial r_k}{\partial u}(u,u)}.
\end{aligned}
\end{equation*}
Using the same method, we have $\overset{2}{\Gamma}_k(v)=\frac{\frac{\partial r_k}{\partial v}(av,v)}{\frac{\partial r_k}{\partial u}(av,v)}$. Thus we have proven \eqref{eq6.328}, and can now write

\begin{equation*}
\begin{aligned}
\overset{1}{\Phi}_k(u)=&\int_0^u\overset{1}{\Lambda}_k(u^{'})du^{'}+\frac{1}{\overset{1}{\Gamma}_k(u)}\int_{au}^u M_k(u^{'},u)du^{'}\\
=&\overset{1}{\gamma}_k(u)\frac{\partial r_k}{\partial u}(au,u)-\overset{1}{\gamma}_k(0)\frac{\partial r_k}{\partial u}(0,0)+\frac{1}{\overset{1}{\Gamma}_k(u)}\left(\frac{\partial r_k}{\partial v}(u,u)-\frac{\partial r_k}{\partial v}(au,u)\right)\\
=&\frac{1}{\overset{1}{\Gamma}_k(u)}\frac{\partial r_k}{\partial v}(au,u)-\frac{1}{\Gamma_0}+\frac{1}{\overset{1}{\Gamma}_k(u)}\left(\frac{\partial r_k}{\partial v}(u,u)-\frac{\partial r_k}{\partial v}(au,u)\right)\\
=&\frac{\partial r_k}{\partial u}(u,u)-\frac{1}{\Gamma_0},
\end{aligned}
\end{equation*}
\begin{equation*}
\begin{aligned}
\overset{2}{\Phi}_k(v)=&\int_0^v\overset{2}{\Lambda}_k(v^{'})dv^{'}+\overset{2}{\Gamma}_k(v)\int_{av}^v M_k(av,v^{'})dv^{'}\\
=&\overset{2}{\gamma}_k(v)\frac{\partial r_k}{\partial v}(av,av)-\overset{2}{\gamma}_k(0)\frac{\partial r_k}{\partial v}(0,0)+\overset{2}{\Gamma}_k(v)\left(\frac{\partial r_k}{\partial u}(av,v)-\frac{\partial r_k}{\partial u}(av,av)\right)\\
=&\overset{2}{\Gamma}_k(v)\frac{\partial r_k}{\partial u}(av,av)-1+\overset{2}{\Gamma}_k(v)\left(\frac{\partial r_k}{\partial u}(av,v)-\frac{\partial r_k}{\partial u}(av,av)\right)\\
=&\frac{\partial r_k}{\partial v}(av,v)-1.
\end{aligned}
\end{equation*}
Then an equation similar to \eqref{eq6.16} can be obtained by a simple calculation:

\begin{equation*}
\begin{aligned}
&r_0+\frac{u}{\Gamma_0}+v+\int_0^u \overset{1}{\Phi}_k(u^{'})du^{'}+\int_0^v \overset{2}{\Phi}_k(v^{'})dv^{'}+\int_0^u\int_u^{v^{'}} M_k(u^{'},v^{'})du^{'}dv^{'}+\int_0^v \int_{av^{'}}^u M_k(u^{'},v^{'})du^{'}dv^{'}\\
=&r_0+\frac{u}{\Gamma_0}+v+\int_0^u\left(\frac{\partial r_k}{\partial u}(u^{'},u^{'})-\frac{1}{\Gamma_0}\right)du^{'}+\int_0^v\left(\frac{\partial r_k}{\partial v}(av^{'},v^{'})-1\right)dv^{'}\\
&+\int_0^u\int_u^{v^{'}} \frac{\partial^2 r_k}{\partial u\partial v} (u^{'},v^{'})du^{'}dv^{'}+\int_0^v\int_{av^{'}}^u\frac{\partial^2 r_k}{\partial u\partial v}(u^{'},v^{'})du^{'}dv^{'}\\
=&r_k(u,v).
\end{aligned}
\end{equation*}

To complete the proof, we define the functions $\overset{1}{\tilde{\alpha}}_{+,k}(u)$ and $\overset{2}{\tilde{\beta}}_{+,k}(v)$ as follows:

\begin{equation}\label{eq6.330}
\overset{1}{\tilde{\alpha}}_{+,k}(u)=\overset{1}{H}(\overset{1}{\beta}_{+,k}(u),\overset{1}{\alpha}_{-,k}(u),\overset{1}{\beta}_{-,k}(u)),
\end{equation}
\begin{equation}\label{eq6.331}
\overset{2}{\tilde{\beta}}_{+,k}(v)=\overset{2}{H}(\overset{2}{\alpha}_{+,k}(v),\overset{2}{\alpha}_{-,k}(v),\overset{2}{\beta}_{-,k}(v)).
\end{equation}
Since the solution $(\alpha_k,\beta_k,t_k,r_k)$ satisfies the jump condition \eqref{eq5.63} and the partial derivatives of $J$ with respect to $\alpha_+$ and $\beta_+$ at the interaction point are not zero, i.e.,

\begin{equation*}
\frac{\partial J}{\partial\alpha_+}(\overset{1}{\alpha}_{+,k}(0),\overset{1}{\beta}_{+,k}(0),\overset{1}{\alpha}_{-,k}(0),\overset{1}{\beta}_{-,k}(0))=\frac{\partial J}{\partial\alpha_+}(\beta_0,\beta_0,\overset{1}{\alpha}_{-0},\overset{1}{\beta}_{-0})\neq 0,
\end{equation*}
\begin{equation*}
\frac{\partial J}{\partial\beta_+}(\overset{2}{\alpha}_{+,k}(0),\overset{2}{\beta}_{+,k}(0),\overset{2}{\alpha}_{-,k}(0),\overset{2}{\beta}_{-,k}(0))=\frac{\partial J}{\partial\beta_+}(\beta_0,\beta_0,\overset{2}{\alpha}_{-0},\overset{2}{\beta}_{-0})\neq 0,
\end{equation*}
we can write

\begin{equation}\label{eq6.332}
\overset{1}{\alpha}_{+,k}(u)=\overset{1}{\tilde{\alpha}}_{+,k}(u),\ \ \ \overset{2}{\beta}_{+,k}(v)=\overset{2}{\tilde{\beta}}_{+,k}(v).
\end{equation}

If we define $A_k(u,v)=A(\alpha_k(u,v),\beta_k(u,v))$, we can then obtain

\begin{equation*}
\alpha_k(u,v)=\overset{1}{\alpha}_{+,k}(u)+\int_u^v\left(A_k\frac{\partial t_k}{\partial v}\right)(u,v^{'})dv^{'}=\overset{1}{\tilde{\alpha}}_{+,k}(u)+\int_u^v\left(A_k\frac{\partial t_k}{\partial v}\right)(u,v^{'})dv^{'},
\end{equation*}
\begin{equation*}
\beta_k(u,v)=\overset{2}{\beta}_{+,k}(v)+\int_{av}^u\left(A_k\frac{\partial t_k}{\partial u}\right)(u^{'},v)du^{'}=\overset{2}{\tilde{\beta}}_{+,k}(v)+\int_{av}^u\left(A_k\frac{\partial t_k}{\partial u}\right)(u^{'},v)du^{'}.
\end{equation*}
So \eqref{eq6.27} and \eqref{eq6.28} hold after replacing the subscript. We only need to consider \eqref{eq6.30} and \eqref{eq6.31} after the related subscript replacement to define $\tilde{\alpha}_k(u,v)$ and $\tilde{\beta}_k(u,v)$.

By following this method, we find that $(\overset{1}{\tilde{\alpha}}_{+,k},\overset{2}{\tilde{\beta}}_{+,k},r_k,\tilde{\alpha}_k,\tilde{\beta}_k)\in \overset{1}{B}_B\times \overset{2}{B}_B\times \overset{1}{B}_N\times \overset{2}{B}_N\times\overset{2}{B}_N$, where $B_1$, $B_2$, $N_1$, and $N_2$ are all definite constants. We can then prove the convergence similarly to Section \ref{6.4}.

If we set $\Delta f:=f_2-f_1$, then results similar to those in section \ref{6.4} hold as long as $\varepsilon$ is sufficiently small. Moreover, we can have the estimate

\begin{equation}\label{eq6.333}
\begin{pmatrix}
a\\
b\\
c\\
\end{pmatrix}\le \begin{pmatrix}
a_0 & a_0 & a_0\\
C & k & C\\
a_0 & a_0 & k\\
\end{pmatrix}\begin{pmatrix}
a\\
b\\
c\\
\end{pmatrix},
\end{equation}
where

\begin{flalign*}
\begin{cases}
a=&\Vert\Delta\tilde{\alpha}\Vert_0+\Vert\Delta\tilde{\beta}\Vert_0,\\
b=&\Vert\Delta\overset{1}{\tilde{\alpha}}_+\Vert_1+\Vert\Delta_m\overset{2}{\tilde{\beta}}_+\Vert_2,\\
c=&\Vert\Delta r\Vert_0,
\end{cases}
\end{flalign*}
and $a_0=C\varepsilon$. Using the analogous notations, we obtain

\begin{equation*}
x\le P^{-1}\begin{pmatrix}
\lambda_1 & 0 & 0\\
0 & \lambda_2 & 0\\
0 & 0 & \lambda_3\\
\end{pmatrix}Px\le \cdot\cdot\cdot\le P^{-1}\begin{pmatrix}
\lambda_1^k & 0 & 0\\
0 & \lambda_2^k & 0\\
0 & 0 & \lambda_3^k\\
\end{pmatrix}Px,
\end{equation*}
where $x=(a\ b\ c)^{T}$, matrix $P$ and $\lambda_i(i=1,2,3)$ agree with \eqref{eq6.288} and $k$ is any positive integer. As $k$ tends to infinity, we find that $a$, $b$ and $c$ are zero. 

Consequently, we have $\Vert\Delta\tilde{\alpha}\Vert_0=\Vert\Delta\tilde{\beta}\Vert_0=\Vert\Delta\overset{1}{\tilde{\alpha}}_+\Vert_1=\Vert\Delta\overset{2}{\tilde{\beta}}_+\Vert_2=\Vert\Delta r\Vert_0=0$. Since the initial values of the zeroth and first order derivatives of each group $\tilde{\alpha}_k$, $\tilde{\beta}_k$, $\overset{1}{\tilde{\alpha}}_{+,k}$, $\overset{2}{\tilde{\beta}}_{+,k}$ and $r_k$ at the interaction point are equal, we have 

\begin{equation}\label{eq6.334}
\Delta\tilde{\alpha}=\Delta\tilde{\beta}=\Delta\overset{1}{\tilde{\alpha}}_+=\Delta\overset{2}{\tilde{\beta}}_+=\Delta r=0.
\end{equation}
Therefore, we have

\begin{equation}\label{eq6.335}
\Delta\alpha=\Delta\beta=0,
\end{equation}
Hence,

\begin{equation*}
c_{in,1}(u,v)=c_{in}(\alpha_1(u,v),\beta_1(u,v))=c_{in}(\alpha_2(u,v),\beta_2(u,v))=c_{in,2}(u,v),
\end{equation*}
\begin{equation*}
c_{out,1}(u,v)=c_{out}(\alpha_1(u,v),\beta_1(u,v))=c_{out}(\alpha_2(u,v),\beta_2(u,v))=c_{out,2}(u,v).
\end{equation*}
As a result, $\Delta\phi=\Delta\psi=0$, and thus $\Delta t=0$. We have thus proven Theorem \ref{uniqueness}.

\end{proof}

\section{Higher regularity}\label{7} 

\begin{proof}

To establish the boundedness of all order derivatives of $\alpha$, $\beta$, $t$, and $r$, we employ induction. Based on earlier findings, we know that for sufficiently small $\varepsilon$, the solution exhibits twice differentiability in $T_{\varepsilon}$. Consequently, derivatives up to order two for $\alpha$, $\beta$, $t$, and $r$ are bounded. Assuming that the $(m-1)$th order and lower derivatives are bounded, where $m\ge 3$, demonstrating the boundedness of the $m$th order derivatives would complete the induction.

Before continuing, we would like to address the estimation related constants used in the proof: the variable $C$ represents a constant associated with $m$, wherein its value alters throughout the steps. In contrast, we employ $\overline{C}$ (with a subscript when required) to denote a constant that remains independent of $m$, meaning its value remains constant at each step. Furthermore, the values of $C$ and $\overline{C}$ (which may have an index) may change from line to line in each equation. We take the maximum value of the different constants used for estimation, but use the previous notation for simplicity.

Appreciatively, we do not need to estimate the $m$th order mixed derivatives of $\alpha$, $\beta$, $t$, and $r$, as their boundedness is straightforward.

To start, we observe that $\frac{\partial^{2} r}{\partial u \partial v}(u,v)$ can be expressed as a linear combination of $\frac{\partial r}{\partial u}(u,v)$ and $\frac{\partial r}{\partial v}(u,v)$. This relationship can be represented as:

\begin{equation}\label{eq7.1}
\frac{\partial^2 r}{\partial u\partial v}(u,v)=\mu\frac{\partial r}{\partial u}+\nu\frac{\partial r}{\partial v}=\frac{1}{c_{out}-c_{in}}\frac{c_{out}}{c_{in}}\frac{\partial c_{in}}{\partial v}\frac{\partial r}{\partial u}(u,v)-\frac{1}{c_{out}-c_{in}}\frac{c_{in}}{c_{out}}\frac{\partial c_{out}}{\partial u}\frac{\partial r}{\partial v}(u,v).
\end{equation}
We can see that it only contains derivatives of at most first order. As a consequence, mixed derivatives of $r$ of order $m$ will have at most derivatives of order $m-1$ on the right hand side, which is bounded by the induction hypothesis.

Next, let's examine the mixed derivatives of $t(u,v)$. According to the characteristic equations $\frac{\partial r}{\partial u}=c_{in}\frac{\partial t}{\partial u}$, $\frac{\partial r}{\partial v}=c_{out}\frac{\partial t}{\partial v}$, we can derive

\begin{equation}\label{eq7.2}
\frac{\partial^2 t}{\partial u\partial v}=\frac{1}{c_{out}-c_{in}}\frac{\partial c_{in}}{\partial v}\frac{\partial t}{\partial u}-\frac{1}{c_{out}-c_{in}}\frac{\partial c_{out}}{\partial u}\frac{\partial t}{\partial v}.
\end{equation}
Following a similar approach as with $r(u,v)$, if we take the $m$th order mixed derivative of $t(u,v)$, the order of the right hand side will be at most $m-1$. Based on the inductive hypothesis, we can establish that these derivatives are bounded.

Lastly, let's consider the mixed derivatives of $\alpha$ and $\beta$. It's worth noting that $\alpha$ and $\beta$ satisfy

\begin{equation*}
\frac{\partial\alpha}{\partial v}(u,v)=\left(A\frac{\partial t}{\partial v}\right)(u,v),\ \ \ \frac{\partial\beta}{\partial u}(u,v)=\left(A\frac{\partial t}{\partial u}\right)(u,v).
\end{equation*}
By taking the partial derivative of the first equation with respect to $u$, and the second equation with respect to $v$, we obtain

\begin{equation*}
\frac{\partial^2\alpha}{\partial u\partial v}(u,v)=\left(\frac{\partial A}{\partial\alpha}\frac{\partial\alpha}{\partial u}+\frac{\partial A}{\partial \beta}\frac{\partial\beta}{\partial u}\right)\frac{\partial t}{\partial v}(u,v)+A\frac{\partial^2 t}{\partial u\partial v}(u,v),
\end{equation*}
\begin{equation*}
\frac{\partial^2\beta}{\partial u\partial v}(u,v)=\left(\frac{\partial A}{\partial\alpha}\frac{\partial \alpha}{\partial v}+\frac{\partial A}{\partial\beta}\frac{\partial\beta}{\partial v}\right)\frac{\partial t}{\partial u}(u,v)+A\frac{\partial^2 t}{\partial u\partial v}(u,v).
\end{equation*}

Consequently, the $m$th order mixed derivatives of $\alpha$ and $\beta$ will only contain derivatives of order no more than $m-1$ with respect to $u$ or $v$ of $\alpha$, $\beta$, $t$, and $r$. In summary, we only need to assess whether the $m$th order derivatives of $\alpha$, $\beta$, $t$, and $r$ with respect to $u$ or $v$ are bounded.

To begin, let's estimate the $m$th order derivatives of $\alpha$ and $\beta$. Utilizing our previous calculations, we can represent $\alpha(u,v)$ as

\begin{equation*}
\alpha(u,v)=\overset{1}{\alpha}_+(u)+\tilde{\alpha}(u,v)=\alpha(u,u)+\int_u^v\left(A\frac{\partial t}{\partial v}\right)(u,v^{'})dv^{'}.
\end{equation*}
Next, we compute the first derivative of $\tilde{\alpha}$ with respect to $u$:

\begin{equation*}
\frac{\partial\tilde{\alpha}}{\partial u}(u,v)=\int_u^v\left[\left(\frac{\partial A}{\partial \alpha}\frac{\partial\alpha}{\partial u}+\frac{\partial A}{\partial \beta}\frac{\partial\beta}{\partial u}\right)\frac{\partial t}{\partial v}+A\frac{\partial^2 t}{\partial u\partial v}\right](u,v^{'})dv^{'}-\left(A\frac{\partial t}{\partial v}\right)(u,u).
\end{equation*}
If we differentiate both sides of the above equation $m-1$ times with respect to $u$, we find that only $\frac{\partial \alpha}{\partial u}$, $\frac{\partial\beta}{\partial u}$, $\frac{\partial^2 t}{\partial u\partial v}$ and $\frac{\partial t}{\partial v}$ have $m$th order derivatives on the right hand side. Specifically, we obtain

\begin{equation*}
\frac{\partial^m\tilde{\alpha}}{\partial u^m}(u,v)=\int_u^v\left[\left(\frac{\partial A}{\partial\alpha}\frac{\partial^m\alpha}{\partial u^m}+\frac{\partial A}{\partial\beta}\frac{\partial^m\beta}{\partial u^m}\right)\frac{\partial t}{\partial v}+A\frac{\partial^{m+1} t}{\partial u^m\partial v}\right](u,v^{'})dv^{'}-\left(A\frac{\partial^ m t}{\partial v^m}\right)(u,u)+LOT,
\end{equation*}
where $LOT$ represents all the remaining terms with derivatives of order at most $m-1$ with respect to $u$ and $v$, and its size is clearly related to the number of iteration steps $m$. We can choose a constant $C$ such that $\left|LOT\right|\le C$.

Note that, according to our previous calculation, $A(u,v)=\mathcal{O}(v)$. This implies the existence of a constant $\overline{C}$ such that $\left|A\right|\le \overline{C}$. Now, using the equation

\begin{equation*}
\begin{aligned}
&\frac{\partial^{m+1} t}{\partial u^m\partial v}(u,v)=\frac{\partial^{m-1}}{\partial u^{m-1}}\left(\frac{\partial^2 t}{\partial u\partial v}\right)(u,v)\\
=&\frac{1}{c_{out}-c_{in}}\left(\frac{\partial c_{in}}{\partial\alpha}\frac{\partial\alpha}{\partial v}+\frac{\partial c_{in}}{\partial \beta}\frac{\partial\beta}{\partial v}\right)\frac{\partial^m t}{\partial u^m}-\frac{1}{c_{out}-c_{in}}\left(\frac{\partial c_{out}}{\partial\alpha}\frac{\partial^m\alpha}{\partial u^m}+\frac{\partial c_{out}}{\partial\beta}\frac{\partial^m\beta}{\partial u^m}\right)\frac{\partial t}{\partial v}+LOT,
\end{aligned}
\end{equation*}
we can derive

\begin{equation}\label{eq7.3}
\left|\frac{\partial^{m+1} t}{\partial u^m\partial v}(u,v)\right|\le \overline{C}\left(\mathop{sup}\limits_{T_\varepsilon}\left|\frac{\partial^m t}{\partial u^m}\right|+\mathop{sup}\limits_{T_\varepsilon}\left|\frac{\partial^m\alpha}{\partial u^m}\right|+\mathop{sup}\limits_{T_\varepsilon}\left|\frac{\partial^m\beta}{\partial u^m}\right|\right)+C.
\end{equation}
Hence, we have

\begin{equation*}
\mathop{sup}\limits_{T_\varepsilon}\left|\frac{\partial^m\alpha}{\partial u^m}(u,v)\right|\le \mathop{sup}\limits_{[0,a\varepsilon]}\left|\frac{d^m\overset{1}{\alpha}_+}{du^m}(u)\right|+\overline{C}\varepsilon\left(\mathop{sup}\limits_{T_\varepsilon}\left|\frac{\partial^m\alpha}{\partial u^m}\right|+\mathop{sup}\limits_{T_\varepsilon}\left|\frac{\partial^m\beta}{\partial u^m}\right|+\varepsilon\mathop{sup}\limits_{T_\varepsilon}\left|\frac{\partial^m t}{\partial u^m}\right|+\mathop{sup}\limits_{T_\varepsilon}\left|\frac{\partial^m t}{\partial v^m}\right|\right)+C.
\end{equation*}
When $\varepsilon$ is sufficiently small, we obtain

\begin{equation}\label{eq7.4}
\mathop{sup}\limits_{T_\varepsilon}\left|\frac{\partial^m\alpha}{\partial u^m}(u,v)\right|\le \frac{1}{1-\overline{C}\varepsilon}\mathop{sup}\limits_{[0,a\varepsilon]}\left|\frac{d^m\overset{1}{\alpha}_+}{du^m}(u)\right|+\overline{C}_1\varepsilon\left(\mathop{sup}\limits_{T_\varepsilon}\left|\frac{\partial^m\beta}{\partial u^m}\right|+\varepsilon\mathop{sup}\limits_{T_\varepsilon}\left|\frac{\partial^m t}{\partial u^m}\right|+\mathop{sup}\limits_{T_\varepsilon}\left|\frac{\partial^m t}{\partial v^m}\right|\right)+C.
\end{equation}
Following that, let's compute the first derivative of $\tilde{\alpha}(u,v)$ with respect to $v$. This can be expressed as $\frac{\partial\tilde{\alpha}}{\partial v}(u,v)=\left(A\frac{\partial t}{\partial v}\right)(u,v)$. When we differentiate both sides of this equation $m-1$ times with respect to $v$, we obtain

\begin{equation*}
\frac{\partial^m\tilde{\alpha}}{\partial v^m}(u,v)=\left(A\frac{\partial^m t}{\partial v^m}\right)(u,v)+LOT,
\end{equation*}
where the term $LOT$ represents lower order terms. For sufficiently small $\varepsilon$, we have

\begin{equation}\label{eq7.5}
\mathop{sup}\limits_{T_\varepsilon}\left|\frac{\partial^m\alpha}{\partial v^m}(u,v)\right|=\mathop{sup}\limits_{T_\varepsilon}\left|\frac{\partial^m\tilde{\alpha}}{\partial v^m}(u,v)\right|\le \overline{C}\varepsilon\mathop{sup}\limits_{T_\varepsilon}\left|\frac{\partial^m t}{\partial v^m}\right|+C,
\end{equation}
where $\overline{C}$ and $C$ are constants.

Similarly, for $\beta(u,v)$, we have

\begin{equation*}
\beta(u,v)=\overset{2}{\beta}_+(v)+\tilde{\beta}(u,v)=\beta(av,v)+\int_{av}^u\left(A\frac{\partial t}{\partial u}\right)(u^{'},v)du^{'}.
\end{equation*}
Differentiating it $m$ times with respect to $u$, we arrive at

\begin{equation*}
\frac{\partial^m \beta}{\partial u^m}(u,v)=\left(A\frac{\partial^m t}{\partial u^m}\right)(u,v)+LOT.
\end{equation*}
When $\varepsilon$ is sufficiently small, we have

\begin{equation}\label{eq7.6}
\mathop{sup}\limits_{T_\varepsilon}\left|\frac{\partial^m\beta}{\partial u^m}(u,v)\right|\le \overline{C}\varepsilon\mathop{sup}\limits_{T_\varepsilon}\left|\frac{\partial^m t}{\partial u^m}\right|+C.
\end{equation}

Now let's consider the $m$th order derivative of $\tilde{\beta}(u,v)$ with respect to $v$, which can be expressed as

\begin{equation*}
\begin{aligned}
\frac{\partial^m\tilde{\beta}}{\partial v^m}(u,v)=&\int_{av}^u\left[\left(\frac{\partial A}{\partial\alpha}\frac{\partial^m\alpha}{\partial v^m}+\frac{\partial A}{\partial\beta}\frac{\partial^m \beta}{\partial v^m}\right)\frac{\partial t}{\partial u}+A\frac{\partial^{m+1} t}{\partial u\partial v^m}\right](u^{'},v)du^{'}\\
&-a^m\left(A\frac{\partial^m t}{\partial u^m}\right)(av,v)+LOT.
\end{aligned}
\end{equation*}
Using the expression for $\frac{\partial^{m+1}t}{\partial u\partial v^m}(u,v)$:

\begin{equation*}
\begin{aligned}
&\frac{\partial^{m+1}t}{\partial u\partial v^m}(u,v)=\frac{\partial^{m-1}}{\partial v^{m-1}}\left(\frac{\partial^2 t}{\partial u\partial v}\right)\\
=&\frac{1}{c_{out}-c_{in}}\left(\frac{\partial c_{in}}{\partial\alpha}\frac{\partial^m\alpha}{\partial v^m}+\frac{\partial c_{in}}{\partial \beta}\frac{\partial^m\beta}{\partial v^m}\right)\frac{\partial t}{\partial u}-\frac{1}{c_{out}-c_{in}}\left(\frac{\partial c_{out}}{\partial\alpha}\frac{\partial\alpha}{\partial u}+\frac{\partial c_{out}}{\partial\beta}\frac{\partial\beta}{\partial u}\right)\frac{\partial^m t}{\partial v^m}+LOT,
\end{aligned}
\end{equation*}
we arrive at

\begin{equation}\label{eq7.7}
\left|\frac{\partial^{m+1} t}{\partial u\partial v^m}(u,v)\right|\le \overline{C}\left(\mathop{sup}\limits_{T_\varepsilon}\left|\frac{\partial^m\alpha}{\partial v^m}\right|+\mathop{sup}\limits_{T_\varepsilon}\left|\frac{\partial^m\beta}{\partial v^m}\right|+\mathop{sup}\limits_{T_\varepsilon}\left|\frac{\partial^m t}{\partial v^m}\right|\right)+C.
\end{equation}
Therefore, we have

\begin{equation*}
\mathop{sup}\limits_{T_\varepsilon}\left|\frac{\partial^m\beta}{\partial v^m}(u,v)\right|\le \mathop{sup}\limits_{[0,\varepsilon]}\left|\frac{d^m\overset{2}{\beta}_+}{dv^m}\right|+\overline{C}\varepsilon\left(\mathop{sup}\limits_{T_\varepsilon}\left|\frac{\partial^m\alpha}{\partial v^m}\right|+\mathop{sup}\limits_{T_\varepsilon}\left|\frac{\partial^m\beta}{\partial v^m}\right|+\varepsilon\mathop{sup}\limits_{T_\varepsilon}\left|\frac{\partial^m t}{\partial v^m}\right|+\mathop{sup}\limits_{T_\varepsilon}\left|\frac{\partial^m t}{\partial u^m}\right|\right)+C.
\end{equation*}
For sufficiently small $\varepsilon$, we obtain

\begin{equation}\label{eq7.8}
\mathop{sup}\limits_{T_\varepsilon}\left|\frac{\partial^m\beta}{\partial v^m}(u,v)\right|\le \frac{1}{1-\overline{C}\varepsilon}\mathop{sup}\limits_{[0,\varepsilon]}\left|\frac{d^m\overset{2}{\beta}_+}{dv^m}\right|+\overline{C}_1\varepsilon\left(\mathop{sup}\limits_{T_\varepsilon}\left|\frac{\partial^m\alpha}{\partial v^m}\right|+\varepsilon\mathop{sup}\limits_{T_\varepsilon}\left|\frac{\partial^m t}{\partial v^m}\right|+\mathop{sup}\limits_{T_\varepsilon}\left|\frac{\partial^m t}{\partial u^m}\right|\right)+C.
\end{equation}

To estimate the $m$th order derivative of $t(u,v)$, we utilize the fact that $t(u,v)$ satisfies $\frac{\partial t}{\partial u}=\frac{1}{c_{in}}\frac{\partial r}{\partial u}$, $\frac{\partial t}{\partial v}=\frac{1}{c_{out}}\frac{\partial r}{\partial v}$, which allows us to express it as

\begin{equation*}
\frac{\partial^m t}{\partial u^m}(u,v)=\frac{1}{c_{in}(u,v)}\frac{\partial^m r}{\partial u^m}(u,v)+LOT,\ \ \ \frac{\partial^m t}{\partial v^m}(u,v)=\frac{1}{c_{out}(u,v)}\frac{\partial^m r}{\partial v^m}(u,v)+LOT,
\end{equation*}
where $LOT$ contains only derivatives of $\alpha$, $\beta$ and $r$ up to the order of $m-1$ with respect to $u$ and $v$. Since $LOT$ is bounded and its size depends on the order $m$, we have

\begin{equation}\label{eq7.9}
\mathop{sup}\limits_{T_\varepsilon}\left|\frac{\partial^m t}{\partial u^m}(u,v)\right|\le \overline{C}\mathop{sup}\limits_{T_\varepsilon}\left|\frac{\partial^m r}{\partial u^m}(u,v)\right|+C,
\end{equation}
\begin{equation}\label{eq7.10}
\mathop{sup}\limits_{T_\varepsilon}\left|\frac{\partial^m t}{\partial v^m}(u,v)\right|\le \overline{C}\mathop{sup}\limits_{T_\varepsilon}\left|\frac{\partial^m r}{\partial v^m}(u,v)\right|+C,
\end{equation}
where $\overline{C}$ and $C$ are constants.

By substituting \eqref{eq7.6}, \eqref{eq7.9}, and \eqref{eq7.10} into \eqref{eq7.4}, we derive

\begin{equation}\label{eq7.11}
\mathop{sup}\limits_{T_\varepsilon}\left|\frac{\partial^m\alpha}{\partial u^m}(u,v)\right|\le\frac{1}{1-\overline{C}\varepsilon}\mathop{sup}\limits_{[0,a\varepsilon]}\left|\frac{d^m\overset{1}{\alpha}_+}{du^m}\right|+\overline{C}_1\varepsilon^2\mathop{sup}\limits_{T_\varepsilon}\left|\frac{\partial^m r}{\partial u^m}\right|+\overline{C}_1\varepsilon\left|\frac{\partial^m r}{\partial v^m}\right|+C,
\end{equation}
where $\overline{C}_1$ is a constant.

Substituting \eqref{eq7.10} into \eqref{eq7.5}, we obtain

\begin{equation}\label{eq7.12}
\mathop{sup}\limits_{T_\varepsilon}\left|\frac{\partial^m\alpha}{\partial v^m}(u,v)\right|\le \overline{C}_1\varepsilon\mathop{sup}\limits_{T_\varepsilon}\left|\frac{\partial^m r}{\partial v^m}\right|+C.
\end{equation}
Similarly, by substituting \eqref{eq7.9} into \eqref{eq7.6}, we arrive at

\begin{equation}\label{eq7.13}
\mathop{sup}\limits_{T_\varepsilon}\left|\frac{\partial^m\beta}{\partial u^m}(u,v)\right|\le \overline{C}_1\varepsilon\mathop{sup}\limits_{T_\varepsilon}\left|\frac{\partial^m r}{\partial u^m}\right|+C.
\end{equation}
Lastly, by substituting \eqref{eq7.5}, \eqref{eq7.9}, and \eqref{eq7.10} into \eqref{eq7.8}, we obtain

\begin{equation}\label{eq7.14}
\mathop{sup}\limits_{T_\varepsilon}\left|\frac{\partial^m\beta}{\partial v^m}(u,v)\right|\le \frac{1}{1-\overline{C}\varepsilon}\mathop{sup}\limits_{[0,\varepsilon]}\left|\frac{d^m\overset{2}{\beta}_+}{dv^m}\right|+\overline{C}_1\varepsilon^2\mathop{sup}\limits_{T_\varepsilon}\left|\frac{\partial^m r}{\partial v^m}\right|+\overline{C}_1\varepsilon\mathop{sup}\limits_{T_\varepsilon}\left|\frac{\partial^m r}{\partial u^m}\right|+C.
\end{equation}

Next, we begin by estimating the $m$th order derivatives of $\overset{1}{\alpha}_+(u)$ and $\overset{2}{\beta}_+(v)$. Given the jump conditions and the non-degeneracy of the partial derivatives of the function $J$ at the point of interaction, we deduce that they satisfy the following equations:

\begin{equation*}
\overset{1}{\alpha}_+(u)=\overset{1}{H}(\overset{1}{\beta}_+(u),\overset{1}{\alpha}_-(u),\overset{1}{\beta}_-(u)),\ \ \ \overset{2}{\beta}_+(v)=\overset{2}{H}(\overset{2}{\alpha}_+(v),\overset{2}{\alpha}_-(v),\overset{2}{\beta}_-(v)).
\end{equation*}
Let's proceed with computing the first derivative of $\overset{1}{\alpha}_+(u)$:

\begin{equation*}
\begin{aligned}
\frac{d\overset{1}{\alpha}_+}{du}(u)=&\overset{1}{F}(\overset{1}{\beta}_+(u),\overset{1}{\alpha}_-(u),\overset{1}{\beta}_-(u))\frac{d\overset{1}{\beta}_+}{du}(u)+\overset{1}{M}_1(\overset{1}{\beta}_+(u),\overset{1}{\alpha}_-(u),\overset{1}{\beta}_-(u))\frac{d\overset{1}{\alpha}_-}{du}(u)\\
&+\overset{1}{M}_2(\overset{1}{\beta}_+(u),\overset{1}{\alpha}_-(u),\overset{1}{\beta}_-(u))\frac{d\overset{1}{\beta}_-}{du}(u),
\end{aligned}
\end{equation*}
where $\overset{1}{\beta}_+(u)=\beta(u,u)$, $\overset{1}{\alpha}_-(u)={\overset{1}{\alpha}}^*(\overset{1}{t}_+(u),\overset{1}{r}_+(u))$, $\overset{1}{\beta}_-(u)={\overset{1}{\beta}}^*(\overset{1}{t}_+(u),\overset{1}{r}_+(u))$. If we take the $m$th order derivative of $\overset{1}{\alpha}_+(u)$, then only three terms on the right hand side, $\frac{d\overset{1}{\beta}_+}{du}(u)$, $\frac{d\overset{1}{\alpha}_-}{du}(u)$ and $\frac{d\overset{1}{\beta}_-}{du}(u)$, may have derivatives of order $m$ with respect to $u$ or $v$. This leads us to the following equation:

\begin{equation*}
\begin{aligned}
\frac{d^m\overset{1}{\alpha}_+}{du^m}(u)=&\overset{1}{F}(\overset{1}{\beta}_+(u),\overset{1}{\alpha}_-(u),\overset{1}{\beta}_-(u))\frac{d^m\overset{1}{\beta}_+}{du^m}(u)+\overset{1}{M}_1(\overset{1}{\beta}_+(u),\overset{1}{\alpha}_-(u),\overset{1}{\beta}_-(u))\frac{d^m\overset{1}{\alpha}_-}{du^m}(u)\\
&+\overset{1}{M}_2(\overset{1}{\beta}_+(u),\overset{1}{\alpha}_-(u),\overset{1}{\beta}_-(u))\frac{d^m\overset{1}{\beta}_-}{du^m}(u)+LOT,
\end{aligned}
\end{equation*}
where $LOT$ contains only derivatives of $\alpha$, $\beta$, $t$, $r$ with respect to $u$ and $v$ up to the order of $m-1$. When $\varepsilon$ is sufficiently small, we have

\begin{equation}\label{eq7.15}
\mathop{sup}\limits_{[0,a\varepsilon]}\left|\frac{d^m\overset{1}{\alpha}_+}{du^m}\right|\le (\overset{1}{F}_0+\overline{C}\varepsilon)\mathop{sup}\limits_{[0,a\varepsilon]}\left|\frac{d^m\overset{1}{\beta}_+}{du^m}\right|+\overline{C}\left(\mathop{sup}\limits_{[0,a\varepsilon]}\left|\frac{d^m\overset{1}{\alpha}_-}{du^m}\right|+\mathop{sup}\limits_{[0,a\varepsilon]}\left|\frac{d^m\overset{1}{\beta}_-}{du^m}\right|\right)+C.
\end{equation}

Similarly, we can compute the first derivative of $\overset{2}{\beta}_+(v)$:

\begin{equation*}
\begin{aligned}
\frac{d\overset{2}{\beta}_+}{dv}(v)=&\overset{2}{F}(\overset{2}{\alpha}_+(v),\overset{2}{\alpha}_-(v),\overset{2}{\beta}_-(v))\frac{d\overset{2}{\alpha}_+}{dv}(v)+\overset{2}{M}_1(\overset{2}{\alpha}_+(v),\overset{2}{\alpha}_-(v),\overset{2}{\beta}_-(v))\frac{d\overset{2}{\alpha}_-}{dv}(v)\\
&+\overset{2}{M}_2(\overset{2}{\alpha}_+(v),\overset{2}{\alpha}_-(v),\overset{2}{\beta}_-(v))\frac{d\overset{2}{\beta}_-}{dv}(v),
\end{aligned}
\end{equation*}
where $\overset{2}{\alpha}_+(v)=\alpha(av,v),\ \ \ \overset{2}{\alpha}_-(v)={\overset{2}{\alpha}}^*(\overset{2}{t}_+(v),\overset{2}{r}_+(v))$, $\overset{2}{\beta}_-(v)={\overset{2}{\beta}}^*(\overset{2}{t}_+(v),\overset{2}{r}_+(v))$. We can obtain the $m$th order derivative of $\overset{2}{\beta}_+(v)$ by focusing on the three terms $\frac{d\overset{2}{\alpha}_+}{dv}(v)$, $\frac{d\overset{2}{\alpha}_-}{dv}(v)$ and $\frac{d\overset{2}{\beta}_-}{dv}(v)$ on the right hand side. Specifically, we have

\begin{equation*}
\begin{aligned}
\frac{d^m\overset{2}{\beta}_+}{dv^m}(v)=&\overset{2}{F}(\overset{2}{\alpha}_+(v),\overset{2}{\alpha}_-(v),\overset{2}{\beta}_-(v))\frac{d^m\overset{2}{\alpha}_+}{dv^m}(v)+\overset{2}{M}_1(\overset{2}{\alpha}_+(v),\overset{2}{\alpha}_-(v),\overset{2}{\beta}_-(v))\frac{d^m\overset{2}{\alpha}_-}{dv^m}(v)\\
&+\overset{2}{M}_2(\overset{2}{\alpha}_+(v),\overset{2}{\alpha}_-(v),\overset{2}{\beta}_-(v))\frac{d^m\overset{2}{\beta}_-}{dv^m}(v)+LOT,
\end{aligned}
\end{equation*}
where $LOT$ contains only derivatives of $\alpha$, $\beta$, $t$ and $r$ with respect to $u$ and $v$ of order up to $m-1$. When $\varepsilon$ is small enough, we can bound the supremum of the $m$th order derivative of $\overset{2}{\beta}_+(v)$ as follows:

\begin{equation}\label{eq7.16}
\mathop{sup}\limits_{[0,\varepsilon]}\left|\frac{d^m\overset{2}{\beta}_+}{dv^m}\right|\le (\overset{2}{F}_0+\overline{C}\varepsilon)\mathop{sup}\limits_{[0,\varepsilon]}\left|\frac{d^m\overset{2}{\alpha}_+}{dv^m}\right|+\overline{C}\left(\mathop{sup}\limits_{[0,\varepsilon]}\left|\frac{d^m\overset{2}{\alpha}_-}{dv^m}\right|+\mathop{sup}\limits_{[0,\varepsilon]}\left|\frac{d^m\overset{2}{\beta}_-}{dv^m}\right|\right)+C.
\end{equation}

Next, we estimate the $m$th order derivatives of $\overset{1}{\beta}_+$, $\overset{2}{\alpha}_+$, $\overset{1}{\alpha}_-$, $\overset{1}{\beta}_-$, $\overset{2}{\alpha}_-$ and $\overset{2}{\beta}_-$ appearing on the right hand side of the above equations. Starting with the expression for $\overset{1}{\beta}_+(u)$ derived previously:

\begin{equation*}
\overset{1}{\beta}_+(u)=\beta(u,u)=\overset{2}{\beta}_+(u)+\int_{au}^u\left(\frac{\partial t}{\partial u}A\right)(\tau,u)d\tau,
\end{equation*}
we take the first derivative of the integral on the right to obtain

\begin{equation*}
\begin{aligned}
\frac{d}{du}\left(\int_{au}^u\left(\frac{\partial t}{\partial u}A\right)(\tau,u)d\tau\right)=&\int_{au}^u\left[\frac{\partial^2 t}{\partial u\partial v}A+\frac{\partial t}{\partial u}\left(\frac{\partial A}{\partial\alpha}\frac{\partial\alpha}{\partial v}+\frac{\partial A}{\partial\beta}\frac{\partial\beta}{\partial v}\right)\right](\tau,u)d\tau\\
&+\left(\frac{\partial t}{\partial u}A\right)(u,u)-a\left(\frac{\partial t}{\partial u}A\right)(au,u).
\end{aligned}
\end{equation*}
If we take the $m$th order derivative with respect to $u$ of the integral term on the left hand side of the above equation, the $m$th order derivative will appear in $\frac{\partial^2 t}{\partial u\partial v}$, $\frac{\partial\alpha}{\partial v}$, $\frac{\partial\beta}{\partial v}$ and $\frac{\partial t}{\partial u}$ on the right hand side, giving:

\begin{equation*}
\begin{aligned}
\frac{d^m}{du^m}\left(\int_{au}^u\left(\frac{\partial t}{\partial u}A\right)(\tau,u)d\tau\right)=&\int_{au}^u\left[\frac{\partial^{m+1} t}{\partial u\partial v^m}A+\frac{\partial t}{\partial u}\left(\frac{\partial A}{\partial\alpha}\frac{\partial^m\alpha}{\partial v^m}+\frac{\partial A}{\partial\beta}\frac{\partial^m \beta}{\partial v^m}\right)\right](\tau,u)d\tau\\
&+\left(\frac{\partial^m t}{\partial u^m}A\right)(u,u)-a^m\left(\frac{\partial^m t}{\partial u^m}A\right)(au,u)+LOT.
\end{aligned}
\end{equation*}
Thus, we can bound the supremum of $\frac{d^m\overset{1}{\beta}_+}{du^m}(u)$ as follows:

\begin{equation*}
\mathop{sup}\limits_{[0,a\varepsilon]}\left|\frac{d^m\overset{1}{\beta}_+}{du^m}\right|\le \mathop{sup}\limits_{[0,\varepsilon]}\left|\frac{d^m\overset{2}{\beta}_+}{dv^m}\right|+\overline{C}\varepsilon\left(\varepsilon\mathop{sup}\limits_{T_\varepsilon}\left|\frac{\partial^{m+1}t}{\partial u\partial v^m}\right|+\mathop{sup}\limits_{T_\varepsilon}\left|\frac{\partial^m\alpha}{\partial v^m}\right|+\mathop{sup}\limits_{T_\varepsilon}\left|\frac{\partial^m\beta}{\partial v^m}\right|+\mathop{sup}\limits_{T_\varepsilon}\left|\frac{\partial^m t}{\partial u^m}\right|\right)+C.
\end{equation*}
By substituting equations \eqref{eq7.7}, \eqref{eq7.12}, \eqref{eq7.8}, \eqref{eq7.9} and \eqref{eq7.10} into the equation above, we can obtain

\begin{equation}\label{eq7.17}
\mathop{sup}\limits_{[0,a\varepsilon]}\left|\frac{d^m\overset{1}{\beta}_+}{du^m}\right|\le (1+\overline{C}_1\varepsilon)\mathop{sup}\limits_{[0,\varepsilon]}\left|\frac{d^m\overset{2}{\beta}_+}{dv^m}\right|+\overline{C}_2\varepsilon^2\mathop{sup}\limits_{T_\varepsilon}\left|\frac{\partial^m r}{\partial v^m}\right|+\overline{C}_3\varepsilon\mathop{sup}\limits_{T_\varepsilon}\left|\frac{\partial^m r}{\partial u^m}\right|+C.
\end{equation}
where $\overline{C}_1$, $\overline{C}_2$, $\overline{C}_3$, and $C$ are constants. 

Additionally, we have $\overset{1}{\alpha}_-(u)={\overset{1}{\alpha}}^*(\overset{1}{t}_+(u),\overset{1}{r}_+(u))$, and its derivative of order $m$ with respect to $u$ is

\begin{equation*}
\frac{d^m\overset{1}{\alpha}_-}{du^m}(u)=\frac{\partial{\overset{1}{\alpha}}^*}{\partial t}(\overset{1}{t}_+(u),\overset{1}{r}_+(u))\left(\frac{\partial^m t}{\partial u^m}+\frac{\partial^m t}{\partial v^m}\right)(u,u)+\frac{\partial{\overset{1}{\alpha}}^*}{\partial r}(\overset{1}{t}_+(u),\overset{1}{r}_+(u))\left(\frac{\partial^m r}{\partial u^m}+\frac{\partial^m r}{\partial v^m}\right)(u,u)+LOT.
\end{equation*}
By substituting \eqref{eq7.9} and \eqref{eq7.10} into the above equation, we can obtain

\begin{equation}\label{eq7.18}
\mathop{sup}\limits_{[0,a\varepsilon]}\left|\frac{d^m\overset{1}{\alpha}_-}{du^m}\right|\le \overline{C}_1\mathop{sup}\limits_{T_\varepsilon}\left|\frac{\partial^m r}{\partial u^m}\right|+\overline{C}_2\mathop{sup}\limits_{T_\varepsilon}\left|\frac{\partial^m r}{\partial v^m}\right|+C.
\end{equation}
Similarly, for $\overset{1}{\beta}_-(u)$, we have

\begin{equation}\label{eq7.19}
\mathop{sup}\limits_{[0,a\varepsilon]}\left|\frac{d^m\overset{1}{\beta}_-}{du^m}\right|\le \overline{C}_1\mathop{sup}\limits_{T_\varepsilon}\left|\frac{\partial^m r}{\partial u^m}\right|+\overline{C}_2\mathop{sup}\limits_{T_\varepsilon}\left|\frac{\partial^m r}{\partial v^m}\right|+C.
\end{equation}
Furthermore, we know that

\begin{equation*}
\overset{2}{\alpha}_+(v)=\alpha(av,v)=\overset{1}{\alpha}_+(av)+\int_{av}^v\left(\frac{\partial t}{\partial v}A\right)(av,s)ds.
\end{equation*}
If we take the $m$th order derivative of the integral term on the right hand side of the second equation above with respect to $v$, we can obtain

\begin{equation*}
\begin{aligned}
\frac{d^m}{dv^m}\left(\int_{av}^v\left(\frac{\partial t}{\partial v}A\right)(av,s)ds\right)=&a^m\int_{av}^v\left[\frac{\partial^{m+1}t}{\partial u^m\partial v}A+\frac{\partial t}{\partial v}\left(\frac{\partial A}{\partial\alpha}\frac{\partial^m\alpha}{\partial u^m}+\frac{\partial A}{\partial\beta}\frac{\partial^m\beta}{\partial u^m}\right)\right](av,s)ds\\
&+\left(\frac{\partial^m t}{\partial v^m}A\right)(av,v)-a^m\left(\frac{\partial^m t}{\partial v^m}A\right)(av,av)+LOT.
\end{aligned}
\end{equation*}
Thus,

\begin{equation*}
\mathop{sup}\limits_{[0,\varepsilon]}\left|\frac{d^m\overset{2}{\alpha}_+}{dv^m}\right|\le a^m\mathop{sup}\limits_{[0,a\varepsilon]}\left|\frac{d^m\overset{1}{\alpha}_+}{du^m}\right|+\overline{C}\varepsilon\left(\varepsilon\mathop{sup}\limits_{T_\varepsilon}\left|\frac{\partial^{m+1}t}{\partial u^m\partial v}\right|+\mathop{sup}\limits_{T_\varepsilon}\left|\frac{\partial^m\alpha}{\partial u^m}\right|+\mathop{sup}\limits_{T_\varepsilon}\left|\frac{\partial^m\beta}{\partial u^m}\right|+\mathop{sup}\limits_{T_\varepsilon}\left|\frac{\partial^m t}{\partial v^m}\right|\right)+C.
\end{equation*}
After substituting \eqref{eq7.3}, \eqref{eq7.11}, \eqref{eq7.6}, \eqref{eq7.9} and \eqref{eq7.10} into the above equation, we obtain

\begin{equation}\label{eq7.20}
\mathop{sup}\limits_{[0,\varepsilon]}\left|\frac{d^m\overset{2}{\alpha}_+}{dv^m}\right|\le (a^m+\overline{C}_1\varepsilon)\mathop{sup}\limits_{[0,a\varepsilon]}\left|\frac{d^m\overset{1}{\alpha}_+}{du^m}\right|+\overline{C}_2\varepsilon^2\mathop{sup}\limits_{T_\varepsilon}\left|\frac{\partial^m r}{\partial u^m}\right|+\overline{C}_3\varepsilon\mathop{sup}\limits_{T_\varepsilon}\left|\frac{\partial^m r}{\partial v^m}\right|+C.
\end{equation}

Furthermore, by definition, we have $\overset{2}{\alpha}_-(v)={\overset{2}{\alpha}}^*(\overset{2}{t}_+(v),\overset{2}{r}_+(v))$. Taking the first derivative of this equation results in

\begin{equation*}
\frac{d\overset{2}{\alpha}_-}{dv}(v)=\frac{\partial{\overset{2}{\alpha}}^*}{\partial t}(\overset{2}{t}_+(v),\overset{2}{r}_+(v))\left(a\frac{\partial t}{\partial u}+\frac{\partial t}{\partial v}\right)(av,v)+\frac{\partial{\overset{2}{\alpha}}^*}{\partial r}(\overset{2}{t}_+(v),\overset{2}{r}_+(v))\left(a\frac{\partial r}{\partial u}+\frac{\partial r}{\partial v}\right)(av,v).
\end{equation*}
If we take the $m$th order derivative with respect to $v$ on the left side of the equation, the derivative of order $m$ of $u$ or $v$ on the right side  will only occur in $\frac{\partial t}{\partial u}$, $\frac{\partial t}{\partial v}$, $\frac{\partial r}{\partial u}$ and $\frac{\partial r}{\partial v}$. Therefore,

\begin{equation*}
\begin{aligned}
\frac{d^m\overset{2}{\alpha}_-}{dv^m}(v)=\frac{\partial{\overset{2}{\alpha}}^*}{\partial t}(\overset{2}{t}_+(v),\overset{2}{r}_+(v))\left(a^m\frac{\partial^m t}{\partial u^m}+\frac{\partial^m t}{\partial v^m}\right)(av,v)+\frac{\partial{\overset{2}{\alpha}}^*}{\partial r}(\overset{2}{t}_+(v),\overset{2}{r}_+(v))\left(a^m\frac{\partial^m r}{\partial u^m}+\frac{\partial^m r}{\partial v^m}\right)(av,v)+LOT.
\end{aligned}
\end{equation*}
By substituting \eqref{eq7.9} and \eqref{eq7.10} into the above equation, we can derive

\begin{equation}\label{eq7.21}
\mathop{sup}\limits_{[0,\varepsilon]}\left|\frac{d^m\overset{2}{\alpha}_-}{dv^m}\right|\le \overline{C}_1\mathop{sup}\limits_{T_\varepsilon}\left|\frac{\partial^m r}{\partial u^m}\right|+\overline{C}_2\mathop{sup}\limits_{T_\varepsilon}\left|\frac{\partial^m r}{\partial v^m}\right|+C.
\end{equation}
Similarly, for $\overset{2}{\beta}_-(v)$, we can obtain

\begin{equation}\label{eq7.22}
\mathop{sup}\limits_{[0,\varepsilon]}\left|\frac{d^m\overset{2}{\beta}_-}{dv^m}\right|\le \overline{C}_1\mathop{sup}\limits_{T_\varepsilon}\left|\frac{\partial^m r}{\partial u^m}\right|+\overline{C}_2\mathop{sup}\limits_{T_\varepsilon}\left|\frac{\partial^m r}{\partial v^m}\right|+C.
\end{equation}
By substituting \eqref{eq7.17}, \eqref{eq7.18}, and \eqref{eq7.19} into \eqref{eq7.15}, we can obtain

\begin{equation}\label{eq7.23}
\mathop{sup}\limits_{[0,a\varepsilon]}\left|\frac{d^m\overset{1}{\alpha}_+}{du^m}\right|\le (\overset{1}{F}_0+\overline{C}_1\varepsilon)\mathop{sup}\limits_{[0,\varepsilon]}\left|\frac{d^m\overset{2}{\beta}_+}{dv^m}\right|+\overline{C}_2\mathop{sup}\limits_{T_\varepsilon}\left|\frac{\partial^m r}{\partial u^m}\right|+\overline{C}_3\mathop{sup}\limits_{T_\varepsilon}\left|\frac{\partial^m r}{\partial v^m}\right|+C.
\end{equation}
Similarly, by substituting \eqref{eq7.20}, \eqref{eq7.21}, and \eqref{eq7.22} into \eqref{eq7.16}, we can obtain

\begin{equation}\label{eq7.24}
\mathop{sup}\limits_{[0,\varepsilon]}\left|\frac{d^m\overset{2}{\beta}_+}{dv^m}\right|\le (a^m\overset{2}{F}_0+\overline{C}_1\varepsilon)\mathop{sup}\limits_{[0,a\varepsilon]}\left|\frac{d^m\overset{1}{\alpha}_+}{du^m}\right|+\overline{C}_2\mathop{sup}\limits_{T_\varepsilon}\left|\frac{\partial^m r}{\partial u^m}\right|+\overline{C}_3\mathop{sup}\limits_{T_\varepsilon}\left|\frac{\partial^m r}{\partial v^m}\right|+C.
\end{equation}
Since $a^m\overset{1}{F}_0\overset{2}{F}_0=a^{m+2}<1$, when $\varepsilon$ is small enough, we obtain

\begin{equation}\label{eq7.25}
\mathop{sup}\limits_{[0,a\varepsilon]}\left|\frac{d^m\overset{1}{\alpha}_+}{du^m}\right|\le \overline{C}_1\mathop{sup}\limits_{T_\varepsilon}\left|\frac{\partial^m r}{\partial u^m}\right|+\overline{C}_2\mathop{sup}\limits_{T_\varepsilon}\left|\frac{\partial^m r}{\partial v^m}\right|+C
\end{equation}
and
\begin{equation}\label{eq7.26}
\mathop{sup}\limits_{[0,\varepsilon]}\left|\frac{d^m\overset{2}{\beta}_+}{dv^m}\right|\le \overline{C}_1\mathop{sup}\limits_{T_\varepsilon}\left|\frac{\partial^m r}{\partial u^m}\right|+\overline{C}_2\mathop{sup}\limits_{T_\varepsilon}\left|\frac{\partial^m r}{\partial v^m}\right|+C
\end{equation}

To estimate the $m$th order derivatives of $r(u,v)$, we can leverage the equations satisfied by $\frac{\partial t}{\partial u}(u,v)$ and $\frac{\partial r}{\partial v}(u,v)$ in Section \ref{5.4}, assuming that $(\alpha,\beta,t,r)$ is the solution of the shock interaction problem. We know that $\overset{1}{\Gamma}$ and $\overset{2}{\Gamma}$ satisfy

\begin{equation*}
\overset{1}{\Gamma}(u):=\frac{\overset{1}{c_{out+}}(u)}{\overset{1}{c_{in+}}(u)}\frac{\overset{1}{V}(u)-\overset{1}{c_{in+}}(u)}{\overset{1}{c_{out+}}(u)-\overset{1}{V}(u)},\ \ \ \overset{2}{\Gamma}(v):=a\frac{\overset{2}{c_{out+}}(v)}{\overset{2}{c_{in+}}(v)}\frac{\overset{2}{V}(v)-\overset{2}{c_{in+}}(v)}{\overset{2}{c_{out+}}(v)-\overset{2}{V}(v)},
\end{equation*}
and $\overset{i}{c_{in}}$, $\overset{i}{c_{out}}$ $(i=1,2)$ are smooth functions of $\alpha$ and $\beta$. Additionally, $\overset{1}{V}(u)$ and $\overset{2}{V}(v)$ satisfy $\overset{i}{V}=\frac{[\overset{i}{\rho}\overset{i}{w}]}{[\overset{i}{\rho}]}$, $i=1,2$, where

\begin{equation*}
\overset{i}{\rho}_+(u)=\rho(\overset{i}{\alpha}_+(u),\overset{i}{\beta}_+(u)),\ \ \ \overset{i}{w}_+(u)=w(\overset{i}{\alpha}_+(u),\overset{i}{\beta}_+(u)),
\end{equation*}
\begin{equation*}
\overset{i}{\rho}_-(u)=\rho(\overset{i}{\alpha}_-(u),\overset{i}{\beta}_-(u)),\ \ \ \overset{i}{w}_-(u)=w(\overset{i}{\alpha}_-(u),\overset{i}{\beta}_-(u)).
\end{equation*}
Consequently, $\overset{i}{V}$ and $\overset{i}{\Gamma}$ also do not contain derivative terms with respect to $u$ or $v$. Taking the derivative of order $m$ of $r$ with respect to $u$, according to \eqref{eq5.51}, we obtain

\begin{equation*}
\frac{\partial^m r}{\partial u^m}(u,v)=a^{m-1}\overset{1}{\gamma}(u)\frac{\partial^m r}{\partial u^m}(au,u)+\frac{1}{\overset{1}{\Gamma}(u)}\int_{au}^u\frac{\partial^{m+1} r}{\partial u\partial v^m}(u^{'},u)du^{'}+\int_u^v\frac{\partial^{m+1} r}{\partial u^m\partial v}(u,v^{'})dv^{'}+LOT,
\end{equation*}
where $LOT$ contains only derivative terms of order up to $m-1$ of $\overset{i}{\Gamma}$ or $\alpha$, $\beta$, $r$. Additionally, from the previous calculations, we know that when $\varepsilon$ is small enough, we have $\left|\overset{1}{\gamma}(u)\right|\le 1+\overline{C}_1\varepsilon$, $\left|\frac{1}{\overset{1}{\Gamma}(u)}\right|\le \frac{1}{\overset{1}{\Gamma}_0}+\overline{C}_2\varepsilon$. Therefore, we can write

\begin{equation*}
\mathop{sup}\limits_{T_\varepsilon}\left|\frac{\partial^m r}{\partial u^m}\right|\le a^{m-1}(1+\overline{C}_1\varepsilon)\mathop{sup}\limits_{T_\varepsilon}\left|\frac{\partial^m r}{\partial u^m}\right|+\overline{C}_2\varepsilon\mathop{sup}\limits_{T_\varepsilon}\left|\frac{\partial^{m+1} r}{\partial u\partial v^m}\right|+\overline{C}_3\varepsilon\mathop{sup}\limits_{T_\varepsilon}\left|\frac{\partial^{m+1}r}{\partial u^m\partial v}\right|+C.
\end{equation*}
When $\varepsilon$ is sufficiently small, we can rewrite the equation as follows:

\begin{equation}\label{eq7.27}
\mathop{sup}\limits_{T_\varepsilon}\left|\frac{\partial^m r}{\partial u^m}\right|\le \overline{C}_1\varepsilon\mathop{sup}\limits_{T_\varepsilon}\left|\frac{\partial^{m+1}r}{\partial u\partial v^m}\right|+\overline{C}_2\varepsilon\mathop{sup}\limits_{T_\varepsilon}\left|\frac{\partial^{m+1}r}{\partial u^m\partial v}\right|+C.
\end{equation}
To estimate $\frac{\partial^{m+1}r}{\partial u\partial v^m}$ and $\frac{\partial^{m+1}r}{\partial u^m\partial v}$, we can refer to the previous calculations

\begin{equation*}
\begin{aligned}
\frac{\partial^2 r}{\partial u\partial v}(u,v)=&\frac{1}{c_{out}-c_{in}}\frac{c_{out}}{c_{in}}\left(\frac{\partial c_{in}}{\partial\alpha}\frac{\partial\alpha}{\partial v}+\frac{\partial c_{in}}{\partial\beta}\frac{\partial\beta}{\partial v}\right)\frac{\partial r}{\partial u}(u,v)\\
&-\frac{1}{c_{out}-c_{in}}\frac{c_{in}}{c_{out}}\left(\frac{\partial c_{out}}{\partial\alpha}\frac{\partial\alpha}{\partial u}+\frac{\partial c_{out}}{\partial \beta}\frac{\partial\beta}{\partial u}\right)\frac{\partial r}{\partial v}(u,v).
\end{aligned}
\end{equation*}
Therefore, we can derive

\begin{equation*}
\begin{aligned}
\frac{\partial^{m+1}r}{\partial u^m\partial v}(u,v)=&\frac{1}{c_{out}-c_{in}}\frac{c_{out}}{c_{in}}\left(\frac{\partial c_{in}}{\partial\alpha}\frac{\partial\alpha}{\partial v}+\frac{\partial c_{in}}{\partial\beta}\frac{\partial\beta}{\partial v}\right)\frac{\partial^m r}{\partial u^m}(u,v)\\
&-\frac{1}{c_{out}-c_{in}}\frac{c_{in}}{c_{out}}\left(\frac{\partial c_{out}}{\partial\alpha}\frac{\partial^m\alpha}{\partial u^m}+\frac{\partial c_{out}}{\partial \beta}\frac{\partial^m\beta}{\partial u^m}\right)\frac{\partial r}{\partial v}(u,v)+LOT,
\end{aligned}
\end{equation*}
\begin{equation*}
\begin{aligned}
\frac{\partial^{m+1}r}{\partial u\partial v^m}(u,v)=&\frac{1}{c_{out}-c_{in}}\frac{c_{out}}{c_{in}}\left(\frac{\partial c_{in}}{\partial\alpha}\frac{\partial^m\alpha}{\partial v^m}+\frac{\partial c_{in}}{\partial\beta}\frac{\partial^m\beta}{\partial v^m}\right)\frac{\partial r}{\partial u}(u,v)\\
&-\frac{1}{c_{out}-c_{in}}\frac{c_{in}}{c_{out}}\left(\frac{\partial c_{out}}{\partial\alpha}\frac{\partial\alpha}{\partial u}+\frac{\partial c_{out}}{\partial \beta}\frac{\partial\beta}{\partial u}\right)\frac{\partial^m r}{\partial v^m}(u,v)+LOT.
\end{aligned}
\end{equation*}
Using these expressions, we can obtain the following estimates:

\begin{equation}\label{eq7.28}
\mathop{sup}\limits_{T_\varepsilon}\left|\frac{\partial^{m+1}r}{\partial u^m\partial v}\right|\le \overline{C}_1\mathop{sup}\limits_{T_\varepsilon}\left|\frac{\partial^m r}{\partial u^m}\right|+\overline{C}_2\mathop{sup}\limits_{T_\varepsilon}\left|\frac{\partial^m\alpha}{\partial u^m}\right|+\overline{C}_3\mathop{sup}\limits_{T_\varepsilon}\left|\frac{\partial^m\beta}{\partial u^m}\right|+C,
\end{equation}
\begin{equation}\label{eq7.29}
\mathop{sup}\limits_{T_\varepsilon}\left|\frac{\partial^{m+1}r}{\partial u\partial v^m}\right|\le \overline{C}_1\mathop{sup}\limits_{T_\varepsilon}\left|\frac{\partial^m r}{\partial v^m}\right|+\overline{C}_2\mathop{sup}\limits_{T_\varepsilon}\left|\frac{\partial^m \alpha}{\partial v^m}\right|+\overline{C}_3\mathop{sup}\limits_{T_\varepsilon}\left|\frac{\partial^m\beta}{\partial v^m}\right|+C.
\end{equation}

Finally, by substituting \eqref{eq7.11}, \eqref{eq7.13}, \eqref{eq7.25}, \eqref{eq7.12}, \eqref{eq7.14} and \eqref{eq7.26} into the above equations, we obtain

\begin{equation}\label{eq7.30}
\mathop{sup}\limits_{T_\varepsilon}\left|\frac{\partial^{m+1}r}{\partial u^m\partial v}\right|\le \overline{C}_1\mathop{sup}\limits_{T_\varepsilon}\left|\frac{\partial^m r}{\partial u^m}\right|+\overline{C}_2\mathop{sup}\limits_{T_\varepsilon}\left|\frac{\partial^m r}{\partial v^m}\right|+C,
\end{equation}
\begin{equation}\label{eq7.31}
\mathop{sup}\limits_{T_\varepsilon}\left|\frac{\partial^{m+1}r}{\partial u\partial v^m}\right|\le \overline{C}_1\mathop{sup}\limits_{T_\varepsilon}\left|\frac{\partial^m r}{\partial u^m}\right|+\overline{C}_2\mathop{sup}\limits_{T_\varepsilon}\left|\frac{\partial^m r}{\partial v^m}\right|+C.
\end{equation}
So \eqref{eq7.27} becomes

\begin{equation*}
\mathop{sup}\limits_{T_\varepsilon}\left|\frac{\partial^m r}{\partial u^m}\right|\le \overline{C}_1\varepsilon\mathop{sup}\limits_{T_\varepsilon}\left|\frac{\partial^m r}{\partial u^m}\right|+\overline{C}_2\varepsilon\mathop{sup}\limits_{T_\varepsilon}\left|\frac{\partial^m r}{\partial v^m}\right|+C.
\end{equation*}
When $\varepsilon$ is sufficiently small, the above equation simplifies to

\begin{equation}\label{eq7.32}
\mathop{sup}\limits_{T_\varepsilon}\left|\frac{\partial^m r}{\partial u^m}\right|\le \overline{C}_1\varepsilon\mathop{sup}\limits_{T_\varepsilon}\left|\frac{\partial^m r}{\partial v^m}\right|+C.
\end{equation}
If we take the derivative of order $m$ of $r$ with respect to $v$, using \eqref{eq5.53}, we have

\begin{equation*}
\frac{\partial^m r}{\partial v^m}(u,v)=a^{m-1}\overset{2}{\gamma}(v)\frac{\partial^m r}{\partial v^m}(av,av)+a^{m-1}\overset{2}{\Gamma}(v)\int_{av}^v\frac{\partial^{m+1}r}{\partial u^m\partial v}(av,v^{'})dv^{'}+\int_{av}^u\frac{\partial^{m+1}r}{\partial u\partial v^m}(u^{'},v)du^{'}+LOT,
\end{equation*}
where $LOT$ contains only derivatives of order up to $m-1$ of $\overset{i}{\Gamma}$ or $\alpha$, $\beta$, $r$. Additionally, based on previous calculations, when $\varepsilon$ is sufficiently small, we have $\left|\overset{2}{\gamma}(v)\right|\le 1+\overline{C}_1\varepsilon$, $\left|\overset{2}{\Gamma}(v)\right|\le \overset{2}{\Gamma}_0+\overline{C}_2\varepsilon$. Therefore, we can get

\begin{equation*}
\mathop{sup}\limits_{T_\varepsilon}\left|\frac{\partial^m r}{\partial v^m}\right|\le a^{m-1}(1+\overline{C}_1\varepsilon)\mathop{sup}\limits_{T_\varepsilon}\left|\frac{\partial^m r}{\partial v^m}\right|+\overline{C}_2\varepsilon\mathop{sup}\limits_{T_\varepsilon}\left|\frac{\partial^{m+1} r}{\partial u^m\partial v}\right|+\overline{C}_3\varepsilon\mathop{sup}\limits_{T_\varepsilon}\left|\frac{\partial^{m+1} r}{\partial u\partial v^m}\right|+C.
\end{equation*}
When $\varepsilon$ is sufficiently small, we can rewrite the above equation as follows:

\begin{equation*}
\mathop{sup}\limits_{T_\varepsilon}\left|\frac{\partial^m r}{\partial v^m}\right|\le \overline{C}_1\varepsilon\mathop{sup}\limits_{T_\varepsilon}\left|\frac{\partial^{m+1}r}{\partial u\partial v^m}\right|+\overline{C}_2\varepsilon\mathop{sup}\limits_{T_\varepsilon}\left|\frac{\partial^{m+1}r}{\partial u^m\partial v}\right|+C.
\end{equation*}
By substituting \eqref{eq7.30} and \eqref{eq7.31} into the above equation, we obtain

\begin{equation*}
\mathop{sup}\limits_{T_\varepsilon}\left|\frac{\partial^m r}{\partial v^m}\right|\le \overline{C}_1\varepsilon\mathop{sup}\limits_{T_\varepsilon}\left|\frac{\partial^m r}{\partial u^m}\right|+\overline{C}_2\varepsilon\mathop{sup}\limits_{T_\varepsilon}\left|\frac{\partial^m r}{\partial v^m}\right|+C.
\end{equation*}
When $\varepsilon$ is sufficiently small, we have

\begin{equation}\label{eq7.33}
\mathop{sup}\limits_{T_\varepsilon}\left|\frac{\partial^m r}{\partial v^m}\right|\le \overline{C}_1\varepsilon\mathop{sup}\limits_{T_\varepsilon}\left|\frac{\partial^m r}{\partial u^m}\right|+C.
\end{equation}
Combining this with \eqref{eq7.32}, we conclude that

\begin{equation}\label{eq7.34}
\mathop{sup}\limits_{T_\varepsilon}\left|\frac{\partial^m r}{\partial u^m}\right|,\mathop{sup}\limits_{T_\varepsilon}\left|\frac{\partial^m r}{\partial v^m}\right|\le C.
\end{equation}
Substituting this into \eqref{eq7.25} and \eqref{eq7.26}, we find that

\begin{equation}\label{eq7.35}
\mathop{sup}\limits_{[0,a\varepsilon]}\left|\frac{d^m\overset{1}{\alpha}_+}{du^m}\right|,\mathop{sup}\limits_{[0,\varepsilon]}\left|\frac{d^m\overset{2}{\beta}_+}{dv^m}\right|\le C.
\end{equation}
Subsequently, by substituting these estimates into \eqref{eq7.11}, \eqref{eq7.12}, \eqref{eq7.13} and \eqref{eq7.14}, we obtain

\begin{equation}\label{eq7.36}
\mathop{sup}\limits_{T_\varepsilon}\left|\frac{\partial^m\alpha}{\partial u^m}\right|,\mathop{sup}\limits_{T_\varepsilon}\left|\frac{\partial^m\alpha}{\partial v^m}\right|,\mathop{sup}\limits_{T_\varepsilon}\left|\frac{\partial^m\beta}{\partial u^m}\right|,\mathop{sup}\limits_{T_\varepsilon}\left|\frac{\partial^m\beta}{\partial v^m}\right|\le C.
\end{equation}
Finally, applying the above estimates to \eqref{eq7.9} and \eqref{eq7.10}, we conclude that

\begin{equation}\label{eq7.37}
\mathop{sup}\limits_{T_\varepsilon}\left|\frac{\partial^m t}{\partial u^m}\right|,\mathop{sup}\limits_{T_\varepsilon}\left|\frac{\partial^m t}{\partial v^m}\right|\le C.
\end{equation}

We have established that all derivatives of order $m$ of the variables $\alpha$, $\beta$, $t$ and $r$ are bounded. Through mathematical induction, we have successfully demonstrated that when $\varepsilon$ is sufficiently small, the derivatives of the solution $(\alpha,\beta,t,r)$ for our formulated shock interaction problem remain bounded up to any desired order within the domain $T_\varepsilon$. This result implies that the solution is infinitely differentiable.

\end{proof}

%

\newpage

\bibliographystyle{abbrv}
\bibliography{reference}

\end{CJK}

\end{document}